\newtheorem{theorem}{Theorem}[section]
\newtheorem{proposition}[theorem]{Proposition}
\newtheorem{lemma}[theorem]{Lemma}
\newtheorem{corollary}[theorem]{Corollary}
\newtheorem{remark}[theorem]{Remark}
\def\tr{\mathop{\rm tr }\nolimits}
\title[The characteristic polynomial of a Seidel matrix]%
{On equiangular lines in $17$ dimensions and \\ the characteristic polynomial of a Seidel matrix}
\author{Gary R. W. Greaves}
\address{School of Physical and Mathematical Sciences, \\
Nanyang Technological University, \\
21 Nanyang Link, Singapore 637371, Singapore}
\email{grwgrvs@gmail.com}
\thanks{The first author was supported by the Singapore
Ministry of Education Academic Research Fund (Tier 1);  grant number:  RG127/16.}
\author{Pavlo Yatsyna}
\address{Department of Mathematics, \\
 Royal Holloway, University of London, \\
  Egham Hill, Egham, Surrey, TW20 0EX, UK}
  \email{pvyatsyna@gmail.com}
  \thanks{}
\subjclass[2010]{Primary 05B20; Secondary 05B40, 05C45.}
\begin{document}
	
\maketitle

\begin{abstract}
	For $e$ a positive integer, we find restrictions modulo $2^e$ on the coefficients of the characteristic polynomial $\chi_S(x)$ of a Seidel matrix $S$.
	We show that, for a Seidel matrix of order $n$ even (resp.\ odd), there are at most $2^{\binom{e-2}{2}}$ (resp.\ $2^{\binom{e-2}{2}+1}$) possibilities for the congruence class of $\chi_S(x)$ modulo $2^e\mathbb Z[x]$.
	As an application of these results we obtain an improvement to the upper bound for the number of equiangular lines in $\mathbb R^{17}$, that is, we reduce the known upper bound from $50$ to $49$.
\end{abstract}

\section{Introduction}

For a matrix $M$, we define the characteristic polynomial of $M$ as $\chi_M(x) = \det(xI-M)$.
A symmetric $\{0,\pm 1\}$-matrix having $0$ on the diagonal and all other entries equal to $\pm 1$ is called a \textbf{Seidel matrix}.
Much like adjacency matrices, Seidel matrices describe the adjacency of a graph.
Indeed, a Seidel matrix $S$ of order $n$ corresponds to a graph on $n$ vertices where vertex $i$ is adjacent to vertex $j$ if and only if $S_{ij} = -1$.
We call this graph the \textbf{underlying graph} of $S$.
Throughout, $\mathbf{1}$ denotes the all-ones vector and $J = \mathbf{1} \mathbf{1}^\top$ denotes the all-ones matrix, of size appropriate to the context in which it is used.
Let $A$ be the adjacency matrix of the underlying graph of a Seidel matrix $S$.
Then the matrices $A$ and $S$ are related by the equation $S = J-I-2A$.

Both the matrices $S$ and $A$ are symmetric matrices with integer entries.
Therefore, the eigenvalues of both are necessarily real and also algebraic integers (they are the zeros of the characteristic polynomial, which is monic with integer coefficients).
However, the question of which eigenvalues that satisfy these necessary conditions can be eigenvalues of these matrices have very different answers.

We call an algebraic integer \textbf{totally real} if all its (Galois) conjugates are real.
Hoffman~\cite{hoffman75} conjectured and Estes~\cite{estes92} showed that (also see Salez~\cite{salez15}) every totally real algebraic integer is an eigenvalue of an adjacency matrix of a graph.
In other words, for all totally real algebraic integers $\alpha$, there exists a graph with adjacency matrix $A$ such that the minimal polynomial of $\alpha$ divides the characteristic polynomial $\chi_A(x)$ of $A$.
On the other hand, Haemers (see \cite{GG18}) showed that the characteristic polynomial of a Seidel matrix is congruent to $\chi_{J-I}(x)$ modulo $2\mathbb Z[x]$.
Suppose $J$ is of order $n$, then $\chi_{J-I}(x) = (x-n+1)(x+1)^{n-1}$.
Hence, modulo $2 \mathbb Z[x]$, we have $\chi_{J-I}(x) \equiv (x+1)^{n}$ if $n$ is even and $\chi_{J-I}(x) \equiv x(x+1)^{n-1}$ if $n$ is odd.
Observe that, since $\mathbb Z[x]/2\mathbb Z[x]$ is a UFD, the polynomial $x^2-2$ cannot be a factor of a Seidel matrix.
Hence, in particular, the totally real algebraic integer $\sqrt{2}$ cannot be an eigenvalue of any Seidel matrix.

We further develop this modular restriction on the coefficients of the characteristic polynomial of a Seidel matrix that was observed by Haemers.
Our main contribution is to characterise the congruence classes of the characteristic polynomial of a Seidel matrix modulo $2^e \mathbb Z[x]$ for $e$ a positive integer.
Further, we give an improvement to the upper bound for the number of equiangular lines in $\mathbb R^{17}$.

Seidel matrices were introduced by Van Lint and Seidel~\cite{vLintSeidel66} as a tool for studying systems of lines in Euclidean space, any two of which make the same angle.
Such a system of lines is called an \textbf{equiangular} line system.
A classical problem from 1948~\cite{haantjes48} is, given a positive integer $d$, to find the maximum cardinality $N(d)$ of an equiangular line system in $\mathbb R^d$.
This problem has recently been enjoying lots of attention, for example, the last two years have produced the publications \cite{balla18,bukh16,GG18,GKMS16,KingTang,LinYu,okudayu16,SzOs18,Yu17,waldron}.
However, the value of $N(d)$ remains unknown for $d$ as small as $14$.
One contribution of this paper is to provide an improvement to the upper bound of $N(17)$, that is, to show that $N(17) \leqslant 49$ (see Remark~\ref{rem:1749}).

In Table~\ref{tab:equi} below, we give the currently known (including the improvement from this paper) values or lower and upper bounds for $N(d)$ for $d$ at most $23$.

\begin{table}[htbp]
	\begin{center}
		\setlength{\tabcolsep}{2pt}
	\begin{tabular}{c|cccccccccccccccc}
		$d$  & 2 & 3        & 4           & 5  & 6  & 7--13 & 14  & 15 & 16 & 17 & 18 & 19 & 20 & 21 & 22 & 23\\\hline
		$N(d)$  & 3 & 6        & 6           & 10 & 16 & 28    & 28--29  & 36 & 40--41 & 48--49 & 56--60 & 72--75 & 90--95  & 126 & 176 & 276
	\end{tabular}
	\end{center}
	\caption{Bounds for the sequence $N(d)$ for $2\leqslant d\leqslant 23$.  A single number is given in the cases where the exact number is known.  The improvement from this paper, $N(17) \leqslant 49$, is included.  A word of caution: some tables in the literature present the lower bounds for $N(d)$ as if they were the exact value of $N(d)$.}
	\label{tab:equi}
\end{table}

The plan of the paper is as follows.
In Section~\ref{sec:a_congruence_for_the_trace_of_powers_of_an_adjacency_matrix}, we establish congruences for trace powers of the adjacency matrix of a graph.
Next, in Section~\ref{sec:a_modular_characterisation_of_the_characteristic_polynomial_of_a_seidel_matrix}, we apply use these congruences to establish modular restrictions on the coefficients of the characteristic polynomial of a Seidel matrix.
In Section~\ref{sec:50lines}, we turn our attention to necessary conditions for the existence of a system of $50$ equiangular lines in $\mathbb R^{17}$.
We establish connections between the existence of equiangular line systems and totally positive algebraic integers with small trace.
In Section~\ref{sec:nonexist}, we show that the necessary conditions from Section~\ref{sec:50lines} cannot be satisfied.
Finally, we conclude by remarking on how the approach presented in this paper can be applied to $N(d)$ for $d \ne 17$.

\section{A congruence for the trace of powers of the adjacency matrix of a graph} 
\label{sec:a_congruence_for_the_trace_of_powers_of_an_adjacency_matrix}



In this section we apply Burnside's lemma to find a congruence modulo $2N$ for a weighted sum of traces of powers of a graph-adjacency matrix.

Let $\Gamma$ be a graph and let $\mathbf{x}$ be a closed walk of length $N$ in $\Gamma$; we write $\mathbf x = x_0x_1\dots x_{N-1}$ where $x_i$ is adjacent to $x_{i+1}$ for each $i \in \{0,\dots,N-1\}$ with indices reduced modulo $N$.
There is a natural correspondence between the vertices of the closed walk $\mathbf{x}$ and the vertices of a regular $N$-gon.
Hence, under this correspondence, we consider the dihedral group $D_N$ of order $2N$ acting on the set of closed $N$-walks of $\Gamma$.
Let $N \geqslant 3$ and write $D_N = \langle r, s \; | \; r^N, s^2, (rs)^2  \rangle$.
For $g \in D_N$, we denote by $\operatorname{fix}_\Gamma(g)$ the set of closed $N$-walks of $\Gamma$ fixed by $g$.

\begin{lemma}\label{lem:fix}
	Let $\Gamma$ be a graph with adjacency matrix $A$ and let $N \geqslant 3$.
	Then
	\begin{enumerate}[(i)]
		\item $|\operatorname{fix}_\Gamma(r^k)| = \tr (A^{\operatorname{gcd}(k,N)})$, for all $k \in \mathbb Z$;
		\item $|\operatorname{fix}_\Gamma(r^{2k}s)| = 0$, for all $k \in \mathbb Z$;
		\item $\displaystyle
		|\operatorname{fix}_\Gamma(r^{2k+1}s)| = 
		\begin{cases}
			\mathbf{1}^\top A^{N/2} \mathbf{1}, & \text{ if $N$ is even}  \\ 
			0, & \text{ if $N$ is odd}  \\ 
		\end{cases}
		\quad$ (for all $k \in \mathbb Z$).	
	\end{enumerate}
\end{lemma}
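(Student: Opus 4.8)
The plan is to analyze, for each type of element $g \in D_N$, exactly what it means for a closed $N$-walk $\mathbf{x} = x_0 x_1 \cdots x_{N-1}$ to be fixed by $g$, and then count such walks using powers of $A$. Throughout I identify the vertex positions of $\mathbf{x}$ with the vertices $0, 1, \dots, N-1$ of a regular $N$-gon, on which $r$ acts as the rotation $i \mapsto i+1 \pmod N$ and $s$ acts as a reflection. A closed $N$-walk fixed by $g$ is precisely a function $\mathbf{x}$ from the $N$-gon vertices to $V(\Gamma)$, constant on the orbits of $\langle g \rangle$, such that consecutive values (around the $N$-gon) are adjacent in $\Gamma$; equivalently it is a closed walk that is invariant under the symmetry $g$.

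For part (i): the element $r^k$ acts on the $N$-gon as a rotation whose orbits all have size $d := N/\gcd(k,N)$, and there are $\gcd(k,N)$ such orbits. A closed walk invariant under $r^k$ is therefore determined by its restriction to $d$ consecutive positions $x_0, x_1, \dots, x_{d-1}$, subject to the adjacency conditions $x_0 \sim x_1 \sim \cdots \sim x_{d-1} \sim x_0$ (the wrap-around uses $r^k$-invariance to identify $x_d$ with $x_0$). Hence the fixed walks biject with closed $d$-walks in $\Gamma$, of which there are exactly $\tr(A^d) = \tr(A^{\gcd(k,N)})$ — wait, I should be careful: $\tr(A^m)$ counts closed $m$-walks, and here $m = d = N/\gcd(k,N)$, so actually the count is $\tr(A^{N/\gcd(k,N)})$; I will double-check the indexing convention against the statement and adjust (the stated answer $\tr(A^{\gcd(k,N)})$ suggests their $r$ has order related differently, or the orbit/period bookkeeping gives $\gcd(k,N)$ as the walk length — this reconciliation is the one genuinely fiddly point and I would verify it on a small example, e.g.\ $N=4$, $k=2$).

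For parts (ii) and (iii): a reflection of the $N$-gon either passes through two opposite vertices (when $N$ is even, the "vertex-type" reflections) or through two opposite edge-midpoints ($N$ even, "edge-type") or through one vertex and one edge-midpoint ($N$ odd). The elements $r^{2k}s$ and $r^{2k+1}s$ split the $N$ reflections into these geometric classes. A walk $\mathbf{x}$ fixed by a reflection $\sigma$ must satisfy $x_i = x_{\sigma(i)}$; in particular, if $\sigma$ swaps two adjacent positions $i, i+1$ of the $N$-gon, then $x_i = x_{i+1}$, but adjacency in $\Gamma$ forbids $x_i \sim x_i$ (no loops), so there are no fixed walks — this kills every edge-type reflection, giving (ii), and also every reflection when $N$ is odd (each such reflection swaps some adjacent pair), giving the odd case of (iii). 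For the vertex-type reflections ($N$ even), $\sigma$ fixes two antipodal positions, say $0$ and $N/2$, and pairs up the rest; a fixed walk is then determined by the "half-walk" $x_0 x_1 \cdots x_{N/2}$, an arbitrary walk of length $N/2$ from $x_0$ to $x_{N/2}$ with the two endpoints unconstrained (the reflection forces the other half to mirror it, and the only adjacency conditions are the ones already along this half). The number of walks of length $N/2$ between some pair of vertices, summed over all ordered pairs, is $\mathbf{1}^\top A^{N/2} \mathbf{1}$, which is exactly the claimed value for $r^{2k+1}s$.

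The main obstacle I anticipate is purely bookkeeping: correctly matching which coset of reflections ($r^{2k}s$ versus $r^{2k+1}s$) is the vertex-type and which is the edge-type, given the presentation $D_N = \langle r,s \mid r^N, s^2, (rs)^2\rangle$ and the author's chosen identification of the generating reflection $s$ with a specific reflection axis of the $N$-gon; and likewise pinning down the exact exponent in part (i). Neither is conceptually hard, but both require fixing conventions carefully and sanity-checking on a small $N$. Everything else follows from the loop-free structure of graphs and the standard interpretation of $\tr(A^m)$ and $\mathbf{1}^\top A^m \mathbf{1}$ as walk counts.
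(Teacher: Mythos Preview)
Your treatment of parts (ii) and (iii) is essentially the paper's argument: reflections that swap a pair of adjacent $N$-gon positions force a loop, which is forbidden, and vertex-type reflections (for $N$ even) reduce a fixed walk to an unconstrained length-$N/2$ walk, counted by $\mathbf{1}^\top A^{N/2}\mathbf{1}$. The residual bookkeeping about which reflections are $r^{2k}s$ versus $r^{2k+1}s$ is indeed just a convention check.

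Part (i), however, contains a genuine slip that you flagged but did not resolve. You correctly note that the orbits of $\langle r^k\rangle$ on $\{0,1,\dots,N-1\}$ each have size $d=N/\gcd(k,N)$ and that there are $\gcd(k,N)$ of them. But then you say the fixed walk is determined by its restriction to $d$ \emph{consecutive} positions; this is the wrong number. The orbits are the cosets of the subgroup $\gcd(k,N)\cdot\mathbb{Z}/N\mathbb{Z}$, so being constant on orbits means $x_i$ depends only on $i \bmod \gcd(k,N)$. Hence the walk is determined by the $\gcd(k,N)$ consecutive values $x_0,x_1,\dots,x_{\gcd(k,N)-1}$ (one representative per orbit), subject to $x_0\sim x_1\sim\cdots\sim x_{\gcd(k,N)-1}\sim x_0$. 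This is a closed walk of length $\gcd(k,N)$, giving $\tr(A^{\gcd(k,N)})$ as stated. Your sanity check $N=4$, $k=2$ would have caught this: $r^2$ swaps $0\leftrightarrow 2$ and $1\leftrightarrow 3$, so a fixed walk has $x_0=x_2$, $x_1=x_3$ and is determined by $x_0,x_1$ with $x_0\sim x_1$, i.e.\ a closed $2$-walk, and $\gcd(2,4)=2$. The confusion is between orbit size and number of orbits; once that is straightened out, your argument and the paper's coincide.
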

\begin{proof}
	Let $\mathbf x = x_0x_1\dots x_{N-1}$ be a closed $N$-walk that is fixed by some element $g \in D_N$.
	Observe that, if $c$ is a cycle of the group element $g$ then, for each $i$ and $j$ in $c$, we have $x_i = x_j$.
	
	First suppose that $g = r^k$ for some $k \in \mathbb Z$.
	Then $g$ has order $m=N/\operatorname{gcd}(k,N)$.
	Therefore, $g$ consists of $N/m$ cycles each of length $m$.
	It follows that, for all $i \in \{0,\dots,N/m-1\}$, we have $x_{im}x_{im+1}\dots x_{im+m-1} = x_0x_1\dots x_{m-1}$.
	Hence $\operatorname{fix}_\Gamma(g)$ consists of closed $N/m$-walks repeated $m$ times.
	Since $\tr (A^{N/m})$ is equal to the number of closed walks of length $N/m = \operatorname{gcd}(k,N)$, we have established (i).
	
	Now suppose $N$ is odd and $g = r^{k}s$ for some $k \in \mathbb Z$.
	In this case, $g$ consists of $\lfloor N/2 \rfloor$ cycles of length $2$.
	It follows that one pair of adjacent vertices of $\mathbf{x}$ must be equal, but there are no such closed walks since $\Gamma$ has no loops.
	Whence we have (ii) and (iii) for $N$ odd.  

	In the remainder of the proof we assume that $N$ is even.
	Suppose that $g = r^{2k}s$ for some $k \in \mathbb Z$.
	In this case, $g$ consists of $N/2$ cycles of length $2$.
	Then two pairs of adjacent vertices of $\mathbf{x}$ must be equal, but there are no such closed walks since $\Gamma$ has no loops.
	This yields (ii).
	
	Finally, suppose that $g =r^{2k+1}s$ for some $k \in \mathbb Z$.
	In this case, $g$ consists of $N/2-1$ cycles of length $2$.
	Without loss of generality, we can assume that $x_0$ and $x_{N/2}$ are fixed by $g$.
	Then, for each $i \in \{1,\dots,N/2-1\}$, we must have $x_i = x_{N-i}$.
	Hence $\operatorname{fix}_\Gamma(g)$ consists of closed $N$-walks made up of an $N/2$-walk together with its inverse.
\end{proof}

For a positive integer $a$, we use $\varphi(a)$ to denote Euler's totient function of $a$.
The following result is the main result of this section, which follows from Lemma~\ref{lem:fix} via a straightforward application of Burnside's lemma (that $\frac{1}{|D_N|}\sum_{g \in D_N} |\operatorname{fix}_\Gamma(g)|$ is an integer).

\begin{lemma}[{cf.\ \cite[Corollary 5a]{Harary}}]\label{lem:trrelmod}
	Let $\Gamma$ be a graph with adjacency matrix $A$ and let $N \geqslant 4$ be an even integer.
 Then
	\[
	\sum_{d \;|\; N} \varphi(N/d) \tr(A^d) + \frac{N}{2} \mathbf{1}^\top A^{N/2} \mathbf{1} \equiv 0 \mod{2N}.
\]
\end{lemma}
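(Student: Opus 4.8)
The plan is to feed Lemma~\ref{lem:fix} straight into Burnside's lemma for the action of $D_N$ on the set $W$ of closed $N$-walks of $\Gamma$ described above. First I would check the harmless point that $D_N$ really does act on $W$: cyclically shifting the indices of a closed walk $x_0x_1\cdots x_{N-1}$, or reversing them, produces another sequence each of whose consecutive pairs (cyclically) is an edge of $\Gamma$, because adjacency is symmetric. Burnside's lemma then says that $\frac{1}{2N}\sum_{g \in D_N}|\operatorname{fix}_\Gamma(g)|$ is a nonnegative integer, so it is enough to prove the identity
\[
\sum_{g \in D_N}|\operatorname{fix}_\Gamma(g)| = \sum_{d \mid N}\varphi(N/d)\tr(A^d) + \frac{N}{2}\,\mathbf 1^\top A^{N/2}\mathbf 1 .
\]

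To establish this identity I would split the sum into its $N$ rotation terms and its $N$ reflection terms. For the rotations, Lemma~\ref{lem:fix}(i) gives $|\operatorname{fix}_\Gamma(r^k)| = \tr(A^{\gcd(k,N)})$, and I would reorganise $\sum_{k=0}^{N-1}\tr(A^{\gcd(k,N)})$ by collecting together the values of $k$ sharing a common $d = \gcd(k,N)$; here $d$ ranges over the divisors of $N$, and writing $k = dm$ with $0 \le m < N/d$ shows that there are exactly $\varphi(N/d)$ such $k$ (the term $k=0$ supplying the divisor $d=N$). This yields $\sum_{d\mid N}\varphi(N/d)\tr(A^d)$. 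For the reflections, the hypothesis that $N$ is even is exactly what lets us invoke parts (ii) and (iii) of Lemma~\ref{lem:fix}: the $N/2$ reflections of the form $r^{2k}s$ each contribute $0$, while the $N/2$ reflections of the form $r^{2k+1}s$ each contribute $\mathbf 1^\top A^{N/2}\mathbf 1$, giving $\frac{N}{2}\mathbf 1^\top A^{N/2}\mathbf 1$ in total. Adding the two contributions gives the displayed identity, and Burnside's lemma completes the argument.

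There is no genuine analytic obstacle here; the only steps needing care are combinatorial bookkeeping — verifying that the fibres of $k \mapsto \gcd(k,N)$ over a complete residue system modulo $N$ have sizes $\varphi(N/d)$, and correctly separating the two conjugacy classes of reflections of $D_N$ (which exist precisely because $N$ is even, hence $N \ge 4$, and which is also why the odd case of Lemma~\ref{lem:fix} is never needed). As a sanity check one can note that taking $g$ to be the identity recovers $|\operatorname{fix}_\Gamma(e)| = \tr(A^N) = |W|$, matching the $d = N$ summand, and that $\sum_{d\mid N}\varphi(N/d) = N$ accounts for all $N$ rotations.
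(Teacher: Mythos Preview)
Your proposal is correct and follows exactly the route indicated in the paper: apply Burnside's lemma to the $D_N$-action on closed $N$-walks, then use Lemma~\ref{lem:fix} to evaluate the fixed-point counts for rotations and (the two types of) reflections. The paper's proof is just the one-line remark that the result ``follows from Lemma~\ref{lem:fix} via a straightforward application of Burnside's lemma''; you have simply spelled out that straightforward application in full.
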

Note that we also have a similar congruence when $N$ is odd, which also follows from Lemma~\ref{lem:fix}.

\begin{lemma}[{cf.\ \cite[Corollary 5a]{Harary}}]\label{lem:trrelmodOdd}
	Let $\Gamma$ be a graph with adjacency matrix $A$ and let $N \geqslant 3$ be an odd integer.
 Then
	\[
	\sum_{d \;|\; N} \varphi(N/d) \tr(A^d) \equiv 0 \mod{2N}.
\]
\end{lemma}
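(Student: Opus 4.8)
The plan is to mimic the derivation of Lemma~\ref{lem:trrelmod}: apply Burnside's lemma to the action of the dihedral group $D_N$ on the set of closed $N$-walks of $\Gamma$. Concretely, Burnside's lemma tells us that
\[
\frac{1}{|D_N|}\sum_{g \in D_N} |\operatorname{fix}_\Gamma(g)|
\]
is an integer (namely the number of orbits), and since $|D_N| = 2N$ this already gives $\sum_{g \in D_N}|\operatorname{fix}_\Gamma(g)| \equiv 0 \pmod{2N}$; the remaining work is to identify this sum with the left-hand side of the asserted congruence.

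First I would dispose of the reflections. Because $N$ is odd, $2$ is a unit modulo $N$, so every reflection of $D_N$ can be written as $r^{2k}s$ for a suitable $k$, and Lemma~\ref{lem:fix}(ii) shows each such element fixes no closed $N$-walk. (Alternatively, Lemma~\ref{lem:fix}(iii) already records that $|\operatorname{fix}_\Gamma(r^{2k+1}s)| = 0$ when $N$ is odd.) Thus only the rotations contribute, and by Lemma~\ref{lem:fix}(i),
\[
\sum_{g \in D_N}|\operatorname{fix}_\Gamma(g)| = \sum_{k=0}^{N-1} |\operatorname{fix}_\Gamma(r^k)| = \sum_{k=0}^{N-1}\tr\!\left(A^{\gcd(k,N)}\right).
\]

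Next I would reorganise the last sum by grouping the indices $k$ according to the value $d = \gcd(k,N)$. Writing $k = dm$, one has $\gcd(dm,N) = d$ precisely when $\gcd(m, N/d) = 1$, so exactly $\varphi(N/d)$ values of $k \in \{0,1,\dots,N-1\}$ have $\gcd(k,N) = d$ (the case $d = N$ being $k = 0$, consistent with $\varphi(1) = 1$). Hence
\[
\sum_{k=0}^{N-1}\tr\!\left(A^{\gcd(k,N)}\right) = \sum_{d \;|\; N}\varphi(N/d)\,\tr(A^d),
\]
and combining this with the divisibility by $2N$ furnished by Burnside's lemma yields the claim.

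I do not expect a genuine obstacle: the proof is essentially the odd-$N$ specialisation of the argument behind Lemma~\ref{lem:trrelmod}. The only steps needing a moment's attention are verifying that the $s$-reflections contribute nothing --- this is exactly where the hypothesis that $N$ is odd is used, through $2$ being invertible modulo $N$ --- and the routine totient-counting identity for the number of $k$ with a prescribed value of $\gcd(k,N)$.
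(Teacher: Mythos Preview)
Your argument is correct and is exactly the approach the paper indicates: the paper does not spell out a separate proof but states that the odd-$N$ congruence ``also follows from Lemma~\ref{lem:fix}'' via Burnside's lemma, and you have filled in precisely those details. Your handling of the reflections (noting that for odd $N$ every reflection is simultaneously of the form $r^{2k}s$ and $r^{2k+1}s$, so parts (ii) and (iii) of Lemma~\ref{lem:fix} both apply) and the standard totient regrouping of the rotation contributions are both fine.
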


A graph $\Gamma$ is called an \textbf{Euler graph} if the degree of each of its vertices is even.
Let $S$ be a Seidel matrix.
We define the \textbf{switching class} of $S$ to be the set of underlying graphs of Seidel matrices of the form $DSD$ where $D$ is a diagonal matrix with diagonal entries equal to $\pm 1$.

Later we will need the following result due to Seidel~\cite{Hageetal03,Seidel74}.
\begin{theorem}\label{thm:OddNEuler}
	Let $S$ be a Seidel matrix of order $n$ odd.
	Then there exists a unique Euler graph in the switching class of $S$.
\end{theorem}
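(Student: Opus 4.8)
The plan is to work over the field $\mathbb{F}_2$ and translate both Seidel switching and the Euler condition into linear algebra. Fix any graph $\Gamma_0$ in the switching class of $S$ and let $A_0$ be its adjacency matrix, which we now regard as a matrix over $\mathbb{F}_2$. Writing $\mathbf{1}_X \in \mathbb{F}_2^n$ for the indicator vector of a subset $X$ of the vertex set, switching $\Gamma_0$ with a $\pm 1$ diagonal matrix $D$ whose $-1$ entries are indexed by $X$ replaces $A_0$ by $A_0 + M_X$, where
\[
M_X := \mathbf{1}_X \mathbf{1}^\top + \mathbf{1}\,\mathbf{1}_X^\top ,
\]
since the $(u,v)$ entry of $\mathbf{1}_X\mathbf{1}^\top + \mathbf{1}\,\mathbf{1}_X^\top$ equals $(\mathbf{1}_X)_u + (\mathbf{1}_X)_v$, which is $1$ precisely when exactly one of $u,v$ lies in $X$, and is $0$ on the diagonal. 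In particular the graphs obtained from $X$ and from its complement $\bar X$ coincide, because $M_{\bar X} = (\mathbf{1}+\mathbf{1}_X)\mathbf{1}^\top + \mathbf{1}(\mathbf{1}+\mathbf{1}_X)^\top = M_X$ over $\mathbb{F}_2$, and switching at $\emptyset$ or at the whole vertex set does nothing.

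Next I would observe that a graph with integer adjacency matrix $A$ is an Euler graph if and only if every row sum of $A$ is even, that is, $A\mathbf{1} = \mathbf{0}$ over $\mathbb{F}_2$. Applying this to $A_0 + M_X$ and using $\mathbf{1}^\top\mathbf{1} = n$ and $\mathbf{1}_X^\top\mathbf{1} = |X|$, the graph obtained by switching $\Gamma_0$ at $X$ is an Euler graph exactly when
\[
A_0\mathbf{1} + n\,\mathbf{1}_X + |X|\,\mathbf{1} = \mathbf{0} \quad\text{over } \mathbb{F}_2 .
\]
This is the one place where the hypothesis that $n$ is odd enters: it lets us replace $n\,\mathbf{1}_X$ by $\mathbf{1}_X$, so the condition becomes $\mathbf{1}_X = A_0\mathbf{1} + |X|\,\mathbf{1}$. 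For existence, take $X$ to be the set of vertices of $\Gamma_0$ of odd degree, so that $\mathbf{1}_X = A_0\mathbf{1}$; the handshake lemma guarantees $|X|$ is even, hence $|X|\,\mathbf{1} = \mathbf{0}$ and the displayed equation holds. Thus switching $\Gamma_0$ at its set of odd-degree vertices produces an Euler graph in the switching class.

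For uniqueness, suppose two Euler graphs in the switching class are related by switching at a subset $X$. Then replacing $A_0\mathbf{1} = \mathbf{0}$ by $(A_0+M_X)\mathbf{1} = \mathbf{0}$ forces $M_X\mathbf{1} = \mathbf{0}$; expanding as above (again using that $n$ is odd) gives $\mathbf{1}_X + |X|\,\mathbf{1} = \mathbf{0}$, that is, $\mathbf{1}_X = |X|\,\mathbf{1}$. This forces $X = \emptyset$ when $|X|$ is even and $X = V$ when $|X|$ is odd, and in both cases the switching is trivial, so the two graphs coincide. I do not expect a serious obstacle here; the only subtlety is bookkeeping the diagonal of $M_X$ and keeping in mind that $X$ and $\bar X$ determine the same graph, so that the two formal solutions $\operatorname{supp}(A_0\mathbf{1})$ and its complement are really one. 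It is worth flagging that the argument genuinely needs $n$ odd: when $n$ is even the term $n\,\mathbf{1}_X$ vanishes modulo $2$, the equation degenerates to $A_0\mathbf{1} = |X|\,\mathbf{1}$, which need not be solvable, and this is exactly why the statement is special to odd order.
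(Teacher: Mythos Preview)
Your argument is correct. The paper does not actually supply its own proof of this theorem; it is quoted as a known result of Seidel with citations to \cite{Hageetal03,Seidel74}, so there is no in-paper proof to compare against. What you have written is essentially the standard proof: recast switching and the Euler condition as linear algebra over $\mathbb{F}_2$, compute $M_X\mathbf{1}=n\,\mathbf{1}_X+|X|\,\mathbf{1}$, and use the oddness of $n$ to solve for $\mathbf{1}_X$. The existence step (switch at the odd-degree vertices, using the handshake lemma to get $|X|$ even) and the uniqueness step (the only solutions to $\mathbf{1}_X=|X|\,\mathbf{1}$ are $X=\emptyset$ and $X=V$, both trivial) are both clean. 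One cosmetic point: in the uniqueness paragraph your phrase ``replacing $A_0\mathbf{1}=\mathbf{0}$ by $(A_0+M_X)\mathbf{1}=\mathbf{0}$'' would read more clearly as ``since both $A_0\mathbf{1}=\mathbf{0}$ and $(A_0+M_X)\mathbf{1}=\mathbf{0}$ hold, subtracting gives $M_X\mathbf{1}=\mathbf{0}$''.
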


The next result is a standard result from linear algebra.

\begin{lemma}\label{lem:jaj}
	Let $A$ be a symmetric integer matrix whose diagonal entries are all even. 
	Then $\mathbf{1}^\top A^i \mathbf{1}$ is even for all integers $i \geqslant 1$.
\end{lemma}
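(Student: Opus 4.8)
The plan is to argue by a spectral/diagonalisation-free argument that keeps track of parity. Write $A = (a_{ij})$ with $a_{ii}$ even for all $i$. The quantity $\mathbf{1}^\top A^i \mathbf{1} = \sum_{j_0,\dots,j_i} a_{j_0 j_1} a_{j_1 j_2} \cdots a_{j_{i-1} j_i}$ counts (with integer weights) walks of length $i$ from any start vertex to any end vertex. I want to show this sum is even; equivalently, I want to show $A^i$ has even row-sums, or even better that $A^i$ itself is, modulo $2$, a symmetric matrix with even diagonal — a property that then propagates under multiplication by $A$.

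First I would reduce everything modulo $2$: let $\bar A \in \mathrm{Mat}_n(\mathbb{F}_2)$ be the reduction of $A$. The hypothesis says $\bar A$ has zero diagonal, and symmetry is preserved, so $\bar A$ is the adjacency matrix of a graph (over $\mathbb{F}_2$). The key step is the following claim: for every $i \geqslant 1$, the diagonal of $\bar A^i$ is zero, and hence $\mathbf{1}^\top \bar A^i \mathbf{1} = \mathbf{1}^\top (\bar A^i - \mathrm{diag}(\bar A^i)) \mathbf{1}$ splits into off-diagonal pairs $(j,k)$ and $(k,j)$ which, by symmetry of $\bar A^i$, contribute equal amounts and so cancel in $\mathbb{F}_2$. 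To see the diagonal of $\bar A^i$ is zero: the $(j,j)$ entry of $\bar A^i$ counts closed walks of length $i$ at $j$ modulo $2$; reversing a closed walk is an involution on this set, and a walk is fixed by reversal only if it ``bounces'' using a loop or backtracks through its midpoint vertex — but $\bar A$ has no loops, so for $i$ odd there are no fixed points (a palindromic odd closed walk forces a loop at its centre), and for $i$ even a fixed walk is determined by its first half, giving a count of $(\bar A^{i/2})_{jj} \pmod 2$, so the diagonal of $\bar A^i$ equals the diagonal of $\bar A^{i/2}$ modulo $2$; induction on $i$ (base case $i=1$, diagonal zero by hypothesis) then finishes it.

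Alternatively — and this is cleaner to write — I would run a single induction on $i$ proving simultaneously that $\bar A^i$ is symmetric with zero diagonal. Symmetry is immediate since $A$ is symmetric. For the diagonal: $(\bar A^{i+1})_{jj} = \sum_k (\bar A^i)_{jk}\, \bar a_{kj} = \sum_k (\bar A^i)_{jk}\,(\bar A)_{kj}$, and pairing the term for $k$ with... actually the slick version is: $\mathrm{tr}(\bar A^{i+1}) = \mathrm{tr}(\bar A^i \bar A)$, but I want the full diagonal, not just the trace, so I instead use that over $\mathbb{F}_2$ a symmetric matrix $M$ with zero diagonal satisfies $(M^2)_{jj} = \sum_k M_{jk} M_{kj} = \sum_k M_{jk}^2 = \sum_k M_{jk} = (M \mathbf{1})_j$, which need not vanish — so the ``$\bar A^i$ has zero diagonal'' claim is subtly false in general (e.g. $i=2$). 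Hence I must be careful: the correct statement to induct on is only about $\mathbf 1^\top \bar A^i \mathbf 1$, and the reversal-of-walks argument of the previous paragraph is the right one. So the main obstacle, and the step to get right, is precisely the parity count of palindromic closed walks: handling the odd case (no fixed points, so the count is even) and the even case (fixed points biject with length-$i/2$ closed walks, reducing $\mathbf 1^\top A^i\mathbf 1 \bmod 2$ to $\mathbf 1^\top A^{i/2}\mathbf 1 \bmod 2$ via $\mathbf 1^\top A^i \mathbf 1 \equiv \sum_j (\bar A^{i/2})_{jj}\cdot(\text{something})$ — more carefully, $\mathbf{1}^\top \bar A^i \mathbf{1}$ counts all walks of length $i$, reversal pairs them up, fixed ones are concatenations of a length-$i/2$ walk with its reverse, which are counted by $\sum_{j} \sum_{\text{walks } j\leadsto k} 1$... ) — I would organise this as: reversal is a fixed-point-free involution on the set of non-palindromic walks, so $\mathbf 1^\top \bar A^i\mathbf 1$ equals the number of palindromic walks mod $2$; for $i$ odd this is $0$; for $i$ even it equals $\mathbf 1^\top \bar A^{i/2}\mathbf 1$ (a palindromic walk of length $i$ is exactly a walk of length $i/2$ followed by its reverse, with free choice of the length-$i/2$ walk and its endpoint ranging over all vertices), and then induction finishes. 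This keeps the whole argument inside $\mathbb{F}_2$ and avoids any appeal to eigenvalues.
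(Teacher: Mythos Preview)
Your final argument is correct: reversing walks pairs up the non-palindromic ones with equal weight; palindromic walks of odd length carry a diagonal factor and hence vanish modulo $2$; and palindromic walks of even length $2m$ biject (with matching $\mathbb F_2$-weight, since squaring is the identity there) with arbitrary walks of length $m$, giving $\mathbf 1^\top \bar A^{2m}\mathbf 1 = \mathbf 1^\top \bar A^{m}\mathbf 1$ in $\mathbb F_2$. Strong induction finishes. You were also right to abandon the claim that $\bar A^i$ has zero diagonal --- it fails already at $i=2$ --- so the detour in your write-up should simply be deleted.

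The paper states this lemma without proof as a standard fact, so there is no argument to compare against directly. The ``standard'' linear-algebra version you are circling around can, however, be written without any mention of walks, and this is likely what the authors have in mind. For any integer vector $\mathbf v$ one has
\[
\mathbf v^\top A\,\mathbf v \;=\; \sum_j a_{jj} v_j^2 \;+\; 2\sum_{j<k} a_{jk} v_j v_k,
\]
which is even because each $a_{jj}$ is even. Taking $\mathbf v = A^m\mathbf 1$ handles every odd exponent at once: $\mathbf 1^\top A^{2m+1}\mathbf 1 \equiv 0\pmod 2$. For even exponents, $\mathbf 1^\top A^{2m}\mathbf 1 = \lVert A^m\mathbf 1\rVert^2 = \sum_j (A^m\mathbf 1)_j^2 \equiv \sum_j (A^m\mathbf 1)_j = \mathbf 1^\top A^m\mathbf 1 \pmod 2$, and one recurses. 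This is exactly the halving structure your combinatorial argument uncovers --- walk reversal is the combinatorial shadow of the quadratic-form identity $\mathbf v^\top A\mathbf v\equiv 0$, and the even-case reduction is the shadow of $\sum v_j^2\equiv\sum v_j\pmod 2$ --- but it fits in two lines.
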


We now establish congruences for the number of walks of a given length in an Euler graph.

\begin{lemma}
	\label{lem:wp4}
    Let $\Gamma$ be an Euler graph with adjacency matrix $A$.
    Then $\mathbf{1}^\top A \mathbf{1} \equiv 0 \mod 2$ and $\mathbf{1}^\top A^{i} \mathbf{1} \equiv 0 \mod 4$ for all $i \geqslant 2$.
\end{lemma}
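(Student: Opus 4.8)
The plan is to exploit the fact that, in an Euler graph, the all-ones vector $\mathbf 1$ lies in the image of $A$ modulo small powers of $2$. Concretely, since every vertex has even degree, $A\mathbf 1 = \mathbf d$ where every entry of $\mathbf d$ is even; writing $\mathbf d = 2\mathbf f$ for an integer vector $\mathbf f$, we get $\mathbf 1^\top A \mathbf 1 = \mathbf 1^\top \mathbf d = 2\,\mathbf 1^\top \mathbf f$, which is even. That settles the first assertion. For the second, I would write, for $i \geqslant 2$,
\[
\mathbf 1^\top A^{i} \mathbf 1 = (A\mathbf 1)^\top A^{i-2} (A\mathbf 1) = \mathbf d^\top A^{i-2} \mathbf d = 4\, \mathbf f^\top A^{i-2} \mathbf f,
\]
and since $A$ is an integer matrix and $\mathbf f$ an integer vector, $\mathbf f^\top A^{i-2}\mathbf f \in \mathbb Z$, so the right-hand side is divisible by $4$.

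The only point that needs a little care is the case $i = 2$ itself, where the exponent $A^{i-2} = A^0 = I$ appears; there the identity reads $\mathbf 1^\top A^2 \mathbf 1 = \mathbf d^\top \mathbf d = 4\,\mathbf f^\top\mathbf f \equiv 0 \bmod 4$, which is fine. So in fact a single display handles all $i \geqslant 2$ uniformly once one observes $\mathbf 1^\top A^i \mathbf 1 = (A\mathbf 1)^\top A^{i-2}(A\mathbf 1)$, using the symmetry of $A$ to move one factor of $A$ to the left. I do not expect any genuine obstacle here; the statement is essentially the observation that $A\mathbf 1 \equiv \mathbf 0 \bmod 2$ for Euler graphs, pushed through a quadratic-form identity.

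One could alternatively deduce the mod-$4$ part as a formal consequence of Lemma~\ref{lem:jaj} applied not to $A$ but to $A^2$ — whose diagonal entries are the vertex degrees, hence even — giving $\mathbf 1^\top (A^2)^j \mathbf 1$ even for all $j \geqslant 1$, i.e. $\mathbf 1^\top A^{2j}\mathbf 1$ even; but that only yields divisibility by $2$ for even exponents and says nothing for odd $i$, so it is weaker than what is claimed. The direct factorisation argument above is the cleaner route and is the one I would write up.
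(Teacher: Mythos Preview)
Your proof is correct and is essentially the same as the paper's: write $A\mathbf 1 = 2\mathbf v$ using the Euler condition and then $\mathbf 1^\top A^i \mathbf 1 = 4\,\mathbf v^\top A^{i-2}\mathbf v$ for $i\geqslant 2$. The only cosmetic difference is that the paper obtains the parity of $\mathbf 1^\top A\mathbf 1$ by citing Lemma~\ref{lem:jaj} (valid for any graph, since the diagonal of $A$ is zero), whereas you derive it from the Euler hypothesis; both are fine.
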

%
\begin{proof}
    By Lemma~\ref{lem:jaj}, we have that $\mathbf{1}^\top A \mathbf{1}$ is even.
	Since $\Gamma$ is an Euler graph, we can write $A \mathbf{1} = 2 \mathbf{v}$ for some integer vector $\mathbf{v}$.
	Hence, for $i \geqslant 2$, we have $\mathbf{1}^\top A^{i} \mathbf{1} = 4\mathbf{v}^\top A^{i-2} \mathbf{v}$, which is divisible by $4$.
\end{proof}

Using  Lemma~\ref{lem:trrelmod} together with Lemma~\ref{lem:wp4}, we obtain the following lemma.

\begin{lemma}\label{lem:trrelEuler}
	Let $\Gamma$ be an Euler graph with adjacency matrix $A$ and let $N \geqslant 4$ be an even integer.
	Then
	\[
		\tr(A^N) \equiv -\sum_{\substack{d \;|\; N \\ d \ne N}} \varphi(N/d) \tr(A^d) \mod{2N}.
	\]
\end{lemma}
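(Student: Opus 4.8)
The plan is to derive this directly from Lemma~\ref{lem:trrelmod} by showing that, for an Euler graph, the correction term $\frac N2 \mathbf 1^\top A^{N/2}\mathbf 1$ already vanishes modulo $2N$. First I would note that since $N\geqslant 4$ is even, we have $N/2\geqslant 2$, so Lemma~\ref{lem:wp4} applies in its stronger form: $\mathbf 1^\top A^{N/2}\mathbf 1\equiv 0\bmod 4$, i.e.\ $\mathbf 1^\top A^{N/2}\mathbf 1 = 4m$ for some integer $m$. Consequently $\frac N2\,\mathbf 1^\top A^{N/2}\mathbf 1 = 2Nm\equiv 0\bmod 2N$.

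Next I would substitute this into the congruence of Lemma~\ref{lem:trrelmod} to obtain
\[
\sum_{d\;|\;N}\varphi(N/d)\tr(A^d)\equiv 0 \mod{2N}.
\]
Finally I would separate off the term $d=N$, using $\varphi(N/N)=\varphi(1)=1$ so that this term is exactly $\tr(A^N)$, and rearrange to get
\[
\tr(A^N)\equiv -\sum_{\substack{d\;|\;N\\ d\ne N}}\varphi(N/d)\tr(A^d)\mod{2N},
\]
which is the assertion.

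There is no real obstacle here: the only point that requires a moment's care is checking that $N/2\geqslant 2$, which is exactly why the hypothesis $N\geqslant 4$ (rather than merely $N$ even) is needed — it lets us invoke the mod-$4$ conclusion of Lemma~\ref{lem:wp4} instead of the weaker mod-$2$ statement that holds for $i=1$. Everything else is bookkeeping with the divisor sum.
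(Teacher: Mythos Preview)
Your proof is correct and follows exactly the approach indicated in the paper, which simply states that the lemma follows from Lemma~\ref{lem:trrelmod} together with Lemma~\ref{lem:wp4}; you have filled in precisely those details, including the observation that $N\geqslant 4$ ensures $N/2\geqslant 2$ so that the stronger $\bmod\ 4$ conclusion of Lemma~\ref{lem:wp4} applies.
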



\section{Relations for the coefficients of characteristic polynomials modulo powers of $2$} 
\label{sec:a_modular_characterisation_of_the_characteristic_polynomial_of_a_seidel_matrix}

\subsection{A relation for characteristic polynomials} 
\label{sub:a_characteristic_polynomial_relation}

In this section we establish a relation between the characteristic polynomial of a Seidel matrix $S$ and the characteristic polynomial of a graph in the switching class of $S$. 
If $\Gamma$ is a graph with adjacency matrix $A$ then its Seidel matrix has the form $S = J-I-2A$.
The characteristic polynomial $\chi_S(x)$ of $S$ can be written as $\chi_S(x) = \chi_{J-2A}(x+1)$.
With this in mind, we instead consider the relation between $\chi_A(x)$ and $\chi_{J-2A}(x)$.

\begin{lemma}\label{lem:coefficientMap}
	Let $A$ be a matrix of order $n$.
	Write $\chi_{J-2A}(x) = \sum_{i=0}^n a_i x^{n-i}$ and $\chi_{A}(x) = \sum_{i=0}^n b_i x^{n-i}$.
	Then
	\[
		a_r = (-2)^r \left(b_r + \frac{1}{2} \sum_{i=1}^{r} b_{r-i} \, \mathbf{1}^\top A^{i-1}  \mathbf{1} \right ).
	\]
\end{lemma}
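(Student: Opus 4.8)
The plan is to peel off the rank-one term $J=\mathbf{1}\mathbf{1}^\top$ using the matrix determinant lemma, and then to read off coefficients by expanding everything as a Laurent series in $x^{-1}$. Set $M = xI + 2A$, so that $\chi_{J-2A}(x) = \det(xI - J + 2A) = \det(M - \mathbf{1}\mathbf{1}^\top)$. Working over the field $K=\mathbb{C}((x^{-1}))$ of formal Laurent series in $x^{-1}$, into which $\mathbb{Z}[x]$ (and the determinant computation) embeds, the matrix $M$ is invertible with Neumann expansion $M^{-1} = x^{-1}(I + 2x^{-1}A)^{-1} = \sum_{k\geqslant 0}(-2)^k A^k x^{-k-1}$, and the matrix determinant lemma yields the identity
\[
  \chi_{J-2A}(x) = \det(M)\bigl(1 - \mathbf{1}^\top M^{-1}\mathbf{1}\bigr) = \det(M) - \det(M)\,\mathbf{1}^\top M^{-1}\mathbf{1}.
\]

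Next I would identify the two pieces on the right as explicit series. Scaling $A \mapsto -2A$ multiplies the $i$-th elementary symmetric function of the eigenvalues by $(-2)^i$, so the coefficient of $x^{n-i}$ in $\det(M) = \chi_{-2A}(x)$ is $(-2)^i b_i$; equivalently $\chi_{-2A}(x) = (-2)^n\chi_A(-x/2)$, whence $\det(M) = \sum_{i=0}^n (-2)^i b_i x^{n-i}$. For the second piece, the Neumann expansion above gives $\mathbf{1}^\top M^{-1}\mathbf{1} = \sum_{k\geqslant 0}(-2)^k(\mathbf{1}^\top A^k\mathbf{1})\,x^{-k-1}$.

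Finally I would multiply the two series and extract the coefficient of $x^{n-r}$ from each side. The term $1$ contributes $(-2)^r b_r$. In the product $\det(M)\,\mathbf{1}^\top M^{-1}\mathbf{1}$, a summand $(-2)^i b_i x^{n-i}\cdot(-2)^k(\mathbf{1}^\top A^k\mathbf{1})x^{-k-1}$ contributes to $x^{n-r}$ exactly when $k = r-1-i$ with $0\leqslant i \leqslant r-1$, giving $(-2)^{r-1}\sum_{i=0}^{r-1}b_i\,\mathbf{1}^\top A^{r-1-i}\mathbf{1}$; the substitution $i\mapsto r-i$ rewrites this as $(-2)^{r-1}\sum_{i=1}^{r}b_{r-i}\,\mathbf{1}^\top A^{i-1}\mathbf{1}$. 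Therefore $a_r = (-2)^r b_r - (-2)^{r-1}\sum_{i=1}^{r}b_{r-i}\,\mathbf{1}^\top A^{i-1}\mathbf{1}$, and since $-(-2)^{r-1} = \tfrac12(-2)^r$, this is precisely the claimed formula. The only delicate points are bookkeeping — pinning down the sign in $\det(M) = \sum(-2)^i b_i x^{n-i}$ and tracking the index shift $k = r-1-i$ — rather than anything substantive, since the matrix determinant lemma holds verbatim over the Laurent series field once $M$ is invertible there.
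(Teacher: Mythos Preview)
Your proof is correct and follows essentially the same route as the paper: both apply the matrix determinant lemma to peel off the rank-one term $J$, identify the coefficients of $\chi_{-2A}(x)$ as $(-2)^i b_i$, and then extract the coefficient of $x^{n-r}$. The only cosmetic difference is that the paper expands $\mathbf{1}^\top\operatorname{adj}(xI+2A)\mathbf{1}$ via a cited closed-form polynomial formula for the adjugate, whereas you work over $\mathbb{C}((x^{-1}))$ and use the Neumann series for $(xI+2A)^{-1}$; since $\operatorname{adj}(M)=\det(M)\,M^{-1}$, these are the same computation in slightly different packaging.
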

\begin{proof}
	By the matrix determinant lemma,
	\[
		\chi_{J-2A}(x) = \chi_{-2A}(x) - \mathbf{1}^\top \operatorname{adj}(xI+2A)\mathbf{1}.
	\]
	Write $\chi_{-2A}(x) = \sum_{i=0}^n c_i x^{n-i}$.
	The adjugate matrix can be written~\cite[p.\ 38]{FDC55} as
	\[
		\operatorname{adj}(xI+2A) = \sum_{i=0}^{n-1} (-2A)^{n-1-i} \sum_{j=0}^i x^{i-j}c_{j}.
	\]
	Note that we have $c_i = (-2)^i b_i$ for all $i \in \{0,\dots,n\}$.
	The result then follows by equating coefficients.
\end{proof}

Now we record a couple of corollaries to Lemma~\ref{lem:coefficientMap}.
First, a surprisingly strong restriction on $\chi_{J-2A}(x)$ where $A$ is the adjacency matrix of a graph of order $n$ even.

\begin{corollary}\label{cor:evencond}
	Let $A$ be the adjacency matrix of a graph of order $n$ even and write $\chi_{J-2A}(x) = \sum_{i=0}^n a_i x^{n-i}$.
	Then $2^r$ divides $a_r$ for all $r \in \{0,\dots,n\}$.
\end{corollary}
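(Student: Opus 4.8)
The plan is to feed everything into Lemma~\ref{lem:coefficientMap} and reduce the statement to a single parity claim. Writing $\chi_A(x) = \sum_{i=0}^n b_i x^{n-i}$, so that the $b_i$ are integers with $b_0 = 1$, that lemma gives
\[
a_r = (-2)^r\left(b_r + \tfrac12\sum_{i=1}^{r} b_{r-i}\,\mathbf 1^\top A^{i-1}\mathbf 1\right).
\]
Since $a_0 = b_0 = 1$, it is enough to show that for each $r$ with $1 \leqslant r \leqslant n$ the integer $Y_r := \sum_{i=1}^{r} b_{r-i}\,\mathbf 1^\top A^{i-1}\mathbf 1 = \sum_{j=0}^{r-1} b_{r-1-j}\,\mathbf 1^\top A^{j}\mathbf 1$ is even; for then $b_r + Y_r/2 \in \mathbb Z$, whence $2^r \mid a_r$.

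Next I would pass from walk counts to traces: as $A$ is symmetric, $A^j$ is symmetric, so $\mathbf 1^\top A^j \mathbf 1 = \sum_u (A^j)_{uu} + 2\sum_{u<v}(A^j)_{uv} \equiv \tr(A^j) \pmod 2$ for every $j \geqslant 0$ (with $\tr(A^0) = n$). Setting $p_j := \tr(A^j)$, this yields $Y_r \equiv \sum_{j=0}^{r-1} b_{r-1-j}\,p_j \pmod 2$. Now Newton's identities, applied to the monic polynomial $\chi_A$ whose roots are the eigenvalues of $A$, give $\sum_{i=0}^{k} b_i\,p_{k-i} = (n-k)b_k$ for $0 \leqslant k \leqslant n$; taking $k = r-1$ gives $Y_r \equiv (n-r+1)\,b_{r-1} \pmod 2$.

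It remains to see that $(n-r+1)b_{r-1}$ is even, and this is where the hypothesis that $n$ is even is used: then $n-r+1 \equiv r+1 \pmod 2$, so when $r$ is odd the factor $r+1$ is even and we are finished. When $r$ is even we instead need $b_{r-1}$ to be even, with $r-1$ odd, so the crux is the fact that for any graph the coefficient $b_j$ of $x^{n-j}$ in $\chi_A(x)$ is even whenever $j$ is odd. This follows because $b_j = (-1)^j\sum_{|S|=j}\det(A_{SS})$ is a signed sum of principal $j\times j$ minors of $A$, and each such submatrix is symmetric with zero diagonal, hence reduces modulo $2$ to an alternating matrix over $\mathbb F_2$; alternating matrices have even rank, so an odd-order one is singular and its determinant is even. (Alternatively, in the Leibniz expansion of $\det(A_{SS})$ only fixed-point-free permutations survive, and $\sigma \mapsto \sigma^{-1}$ pairs them into equal terms by symmetry of $A_{SS}$, with no fixed point since there is no fixed-point-free involution on an odd set.) The main obstacle is precisely this last parity fact about odd-index coefficients of the characteristic polynomial of a graph; the rest is bookkeeping with Lemma~\ref{lem:coefficientMap} and Newton's identities.
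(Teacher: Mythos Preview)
Your argument is correct, but it takes a longer route than the paper's. The paper's proof is a two-line application of Lemma~\ref{lem:jaj}: since $A$ is symmetric with zero (hence even) diagonal, that lemma gives $\mathbf 1^\top A^{i-1}\mathbf 1$ even for every $i\geqslant 2$, while for $i=1$ one has $\mathbf 1^\top A^0\mathbf 1=n$, even by hypothesis. Thus every individual term in $Y_r$ is even, so $Y_r$ is even and $2^r\mid a_r$ follows immediately from Lemma~\ref{lem:coefficientMap}.

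Your route instead passes from walk counts to traces, collapses the sum via Newton's identities to $(n-r+1)b_{r-1}$, and then invokes the parity of odd-index coefficients of $\chi_A$ (essentially Corollary~\ref{cor:evenbr}/Lemma~\ref{lem:odddet} in the paper). All of this is valid, and it is interesting that the argument can be rerouted through those later tools, but it is a genuine detour: once you have observed $\mathbf 1^\top A^j\mathbf 1\equiv\tr(A^j)\pmod 2$, you are one step away from the paper's direct argument (that observation is essentially the proof of Lemma~\ref{lem:jaj}), and the Newton-identity manoeuvre and the appeal to $b_{r-1}$ being even become unnecessary. The paper reserves Corollary~\ref{cor:evenbr} for the odd-$n$ case (Lemma~\ref{lem:oddcond1}), where $\mathbf 1^\top A^0\mathbf 1=n$ is odd and one really does need the extra parity information about the $b_j$.
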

\begin{proof}
	By Lemma~\ref{lem:coefficientMap}, it suffices to show that $\mathbf{1}^\top A^{i-1}  \mathbf{1}$ is even for all $i \geqslant 1$.
	By Lemma~\ref{lem:jaj}, for all $i \geqslant 2$, we have that $\mathbf{1}^\top A^{i-1}  \mathbf{1}$ is even, and, for $i=1$, we have $\mathbf{1}^\top A^{i-1}  \mathbf{1} = n$, which is also even.
\end{proof}

\begin{remark}
	\label{rem:a1a2}
	Let $A$ be an adjacency matrix of a graph of order $n$.
	It is clear that the trace of $J-2A$ equals $n$ and the trace of $(J-2A)^2$ equals $n^2$.
	Write $\chi_{J-2A}(x) = \sum_{i=0}^n a_i x^{n-i}$.
	Obviously $a_0 = 1$.
	Furthermore, using Newton's identities, we see that $a_1 = -n$ and $a_2 = 0$.
\end{remark}

Denote by $\mathcal C_n$ the set of all Seidel matrices of order $n$.
Given a positive integer $e$, define the set $\mathcal P_{n,e} = \{ \chi_S(x) \mod 2^e \mathbb Z[x] \; | \; S \in \mathcal C_n \}$.
Using Remark~\ref{rem:a1a2} together with Corollary~\ref{cor:evencond} allows us to obtain the following.

\begin{corollary}\label{cor:countCharPolySeidelEven}
	Let $n$ be an even integer and $e$ be a positive integer.
	Then the cardinality of $\mathcal P_{n,e}$ is at most $2^{\binom{e-2}{2}}$.
\end{corollary}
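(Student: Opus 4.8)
The plan is to transfer the problem to the coefficients of $\chi_{J-2A}(x)$, where $A$ ranges over adjacency matrices of graphs on $n$ vertices, and then count. Since a Seidel matrix of order $n$ has the form $S = J-I-2A$ with $A$ the adjacency matrix of its underlying graph, we have $\chi_S(x) = \chi_{J-2A}(x+1)$. The substitution $x \mapsto x+1$ is a ring automorphism of $\mathbb Z[x]$ that sends the ideal $2^e\mathbb Z[x]$ onto itself, hence it induces a bijection of $\mathbb Z[x]/2^e\mathbb Z[x]$; therefore $|\mathcal P_{n,e}|$ equals the number of distinct residues of $\chi_{J-2A}(x)$ modulo $2^e\mathbb Z[x]$ as $A$ varies.

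Next I would write $\chi_{J-2A}(x) = \sum_{i=0}^n a_i x^{n-i}$ and identify which coefficients can move modulo $2^e$. By Remark~\ref{rem:a1a2}, the coefficients $a_0 = 1$, $a_1 = -n$ and $a_2 = 0$ are the same for every graph on $n$ vertices, so they contribute nothing. Since $n$ is even, Corollary~\ref{cor:evencond} applies and gives $2^r \mid a_r$ for every $r \in \{0,\dots,n\}$; in particular $a_r \equiv 0 \pmod{2^e}$ whenever $r \geqslant e$, so those coefficients are constant modulo $2^e$ as well. Thus the residue of $\chi_{J-2A}(x)$ modulo $2^e\mathbb Z[x]$ is determined by the residues of $a_3,\dots,a_{e-1}$ alone, and for each such index $r$ the divisibility $2^r \mid a_r$ forces $a_r \bmod 2^e$ to be one of the $2^{e-r}$ multiples of $2^r$ in $\mathbb Z/2^e\mathbb Z$.

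Multiplying these counts yields $|\mathcal P_{n,e}| \leqslant \prod_{r=3}^{e-1} 2^{e-r} = 2^{\sum_{r=3}^{e-1}(e-r)}$, and substituting $j = e-r$ turns the exponent into $\sum_{j=1}^{e-3} j = \binom{e-2}{2}$, which is the claimed bound. (When $e \leqslant 3$ the product is empty, so the bound reduces to $|\mathcal P_{n,e}| \leqslant 1$, in agreement with Haemers' congruence; here one reads $\binom{e-2}{2}$ as $0$.) I do not expect a genuine obstacle: all of the real content sits in Corollary~\ref{cor:evencond} and Remark~\ref{rem:a1a2}, and what remains is the bookkeeping above. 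The only points that need a little care are that $\chi_S$ has fixed degree $n$ (so there is no hidden contribution from a varying top coefficient) and the elementary identity $\sum_{j=1}^{e-3} j = \binom{e-2}{2}$.
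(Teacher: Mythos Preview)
Your proposal is correct and follows essentially the same approach as the paper, which simply cites Remark~\ref{rem:a1a2} and Corollary~\ref{cor:evencond} without spelling out the counting; you have supplied exactly the bookkeeping those two results invite, including the shift $\chi_S(x)=\chi_{J-2A}(x+1)$ and the product $\prod_{r=3}^{e-1}2^{e-r}=2^{\binom{e-2}{2}}$.
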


Clearly, if $n$ is small compared to $e$ then the cardinality of $\mathcal P_{n,e}$ will be strictly less than $2^{\binom{e-2}{2}}$.  
Indeed, for $n = 2$ the cardinality of $\mathcal P_{n,e}$ is $1$ for all $e$.
However, it is straightforward to check that, for small values of $e$ ($e \leqslant 7$), there exist even $n$ giving equality in the bound in Corollary~\ref{cor:countCharPolySeidelEven}.
We conjecture that, for all $e \in \mathbb N$, there exists $N \in \mathbb N$ such that $|\mathcal P_{n,e}| = 2^{\binom{e-2}{2}}$ for all even $n \geqslant N$.

By Corollary~\ref{cor:countCharPolySeidelEven}, there is only one congruence class modulo $2\mathbb Z[x]$ for characteristic polynomials of Seidel matrices of even order.
Since $J-I$ is a Seidel matrix with $\chi_{J-I}(x) = (x-(n-1))(x+1)^{n-1}$, we note the following corollary.

\begin{corollary}[cf.\ {\cite[Lemma 2.2]{GG18}}]\label{cor:evenmod2}
	Let $S$ be a Seidel matrix of order $n$ even.
	Then $\chi_S(x) \equiv (x+1)^n \mod 2\mathbb Z[x]$.
\end{corollary}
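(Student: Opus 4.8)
The statement is an immediate consequence of the machinery already in place, so the plan is essentially bookkeeping. Recall from the introduction that if $A$ is the adjacency matrix of the underlying graph of $S$, then $S = J - I - 2A$, whence
\[
\chi_S(x) = \det\bigl((x+1)I - (J-2A)\bigr) = \chi_{J-2A}(x+1).
\]
So the first thing I would do is reduce the problem to understanding $\chi_{J-2A}(x)$ modulo $2\mathbb{Z}[x]$.

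Next, write $\chi_{J-2A}(x) = \sum_{i=0}^n a_i x^{n-i}$, noting $a_0 = 1$. Since $n$ is even, Corollary~\ref{cor:evencond} applies and gives $2^r \mid a_r$ for every $r \in \{0,\dots,n\}$; in particular $a_r$ is even for all $r \geqslant 1$. Hence $\chi_{J-2A}(x) \equiv x^n \pmod{2\mathbb{Z}[x]}$. Then I would substitute $x \mapsto x+1$: since this shift is an automorphism of $\mathbb{Z}[x]$ carrying the ideal $2\mathbb{Z}[x]$ to itself, it follows that $\chi_S(x) = \chi_{J-2A}(x+1) \equiv (x+1)^n \pmod{2\mathbb{Z}[x]}$, which is the claim. (An alternative packaging: invoke Corollary~\ref{cor:countCharPolySeidelEven} with $e=1$ to learn that $\mathcal{P}_{n,1}$ is a singleton, then identify the unique class using the explicit Seidel matrix $J-I$, whose characteristic polynomial $(x-(n-1))(x+1)^{n-1}$ reduces to $(x+1)^n$ modulo $2$ precisely because $n$ is even.)

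There is no genuine obstacle here. The only steps needing a line of care are the identity $\chi_S(x) = \chi_{J-2A}(x+1)$ and the remark that reduction modulo $2\mathbb{Z}[x]$ commutes with the translation $x \mapsto x+1$; everything else is quoted directly from Corollary~\ref{cor:evencond}, which in turn rests on Lemma~\ref{lem:coefficientMap} and Lemma~\ref{lem:jaj}. I would keep the write-up to a few lines, probably presenting the $J-I$ viewpoint since it makes the ``$(x+1)^n$'' on the right-hand side appear transparently.
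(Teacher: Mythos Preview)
Your proposal is correct and essentially matches the paper's argument. In fact, your ``alternative packaging'' via Corollary~\ref{cor:countCharPolySeidelEven} with $e=1$ and the explicit Seidel matrix $J-I$ is exactly how the paper derives Corollary~\ref{cor:evenmod2}; your primary argument through Corollary~\ref{cor:evencond} and the shift $x\mapsto x+1$ is just the same reasoning unwound one level.
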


Empirically, we observed that the analogous bound for $\mathcal P_{n,e}$ when $n$ is odd should be $2^{\binom{e-2}{2}+1}$.
We continue in pursuit of this bound, which culminates in Corollary~\ref{cor:countCharPolySeidelOdd}.

Note the following well-known result from linear algebra.

\begin{lemma}\label{lem:odddet}
	Let $A$ be a symmetric integer matrix of order $n$ odd whose diagonal entries are all even.
	Then $\det A$ is even.
\end{lemma}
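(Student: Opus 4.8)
The statement is a classical fact: a symmetric integer matrix of odd order with even diagonal has even determinant. The cleanest route is reduction modulo $2$. Working over $\mathbb F_2$, the hypothesis says $A$ becomes a symmetric matrix with zero diagonal, i.e.\ the adjacency matrix of a graph (an \emph{alternating} bilinear form in characteristic $2$). The plan is to show that such a matrix over $\mathbb F_2$ is singular when $n$ is odd, which gives $\det A \equiv 0 \bmod 2$ as desired.

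First I would pass to $\bar A$, the reduction of $A$ modulo $2$, and note $\det A \equiv \det \bar A \bmod 2$, so it suffices to prove $\det \bar A = 0$ in $\mathbb F_2$. Since $A$ is symmetric with even diagonal, $\bar A$ is symmetric with zero diagonal over $\mathbb F_2$; equivalently, the bilinear form $\mathbf x^\top \bar A \mathbf y$ on $\mathbb F_2^n$ is alternating ($\mathbf x^\top \bar A \mathbf x = 0$ for all $\mathbf x$, because the diagonal vanishes and in characteristic $2$ the cross terms pair up). The key structural fact is that a nondegenerate alternating bilinear form exists only on an even-dimensional space: its Gram matrix is congruent to a direct sum of $2\times 2$ hyperbolic blocks $\left(\begin{smallmatrix}0&1\\1&0\end{smallmatrix}\right)$, hence has even rank. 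Therefore $\rk \bar A$ is even, and since $n$ is odd we get $\rk \bar A < n$, so $\det \bar A = 0$.

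The one point needing a little care is establishing that the rank of an alternating form over $\mathbb F_2$ is even. I would do this by the standard symplectic Gram–Schmidt argument: if $\bar A \neq 0$, pick $\mathbf u, \mathbf v$ with $\mathbf u^\top \bar A \mathbf v = 1$; then $\mathbf u, \mathbf v$ span a hyperbolic plane $H$ on which the form is nondegenerate, so $\mathbb F_2^n = H \oplus H^\perp$, and the restriction to $H^\perp$ is again alternating; induct on dimension. This shows the radical has even codimension, i.e.\ $\rk \bar A$ is even. Alternatively, one can invoke the fact that the determinant of an alternating matrix of odd order is zero (the Pfaffian argument, valid over any commutative ring), but the symplectic decomposition is the most self-contained. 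The main obstacle is purely expository — making sure the "alternating in characteristic $2$" bookkeeping is stated correctly, since symmetric-with-zero-diagonal is exactly the right hypothesis and is slightly more subtle than the characteristic-$\neq 2$ case where antisymmetric suffices.
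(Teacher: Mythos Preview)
Your argument is correct: reducing modulo $2$ yields a symmetric matrix $\bar A$ over $\mathbb F_2$ with zero diagonal, the associated bilinear form is alternating (since the diagonal vanishes and the cross terms cancel in characteristic $2$), and the symplectic Gram--Schmidt decomposition shows such a form has even rank, forcing $\det \bar A = 0$ when $n$ is odd.

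The paper, however, does not prove this lemma at all: it is introduced with ``Note the following well-known result from linear algebra'' and stated without proof. So there is no paper argument to compare against; your proof simply fills in the omitted justification. If anything, one could give an even more elementary direct argument by pairing each permutation $\sigma$ in the Leibniz expansion of $\det \bar A$ with $\sigma^{-1}$ (the products agree by symmetry of $\bar A$, so non-involutions cancel in pairs modulo $2$, while every involution on an odd set has a fixed point and hence picks up a zero diagonal entry), but your symplectic approach is equally valid and arguably more conceptual.
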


We will need a corollary of this lemma.

\begin{corollary}\label{cor:evenbr}
	Let $A$ be a symmetric integer matrix of order $n$ whose diagonal entries are all zero and write $\chi_{A}(x) = \sum_{i=0}^n b_i x^{n-i}$.
	Then $b_r$ is even for all odd $r$.
\end{corollary}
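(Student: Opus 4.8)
The plan is to express $b_r$ in terms of principal minors of $A$ and then apply Lemma~\ref{lem:odddet} termwise. Recall the standard fact from linear algebra that, if $\chi_A(x) = \sum_{i=0}^n b_i x^{n-i}$, then
\[
b_r = (-1)^r \sum_{\substack{T \subseteq \{1,\dots,n\} \\ |T| = r}} \det A[T],
\]
where $A[T]$ denotes the principal submatrix of $A$ on the rows and columns indexed by $T$. This is immediate from writing $\det(xI - A)$ as a sum over subsets, or equivalently from the fact that $(-1)^r b_r$ is the $r$-th elementary symmetric polynomial in the eigenvalues of $A$.

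Now fix an odd $r$ with $1 \leqslant r \leqslant n$. For each $T$ with $|T| = r$, the matrix $A[T]$ is a symmetric integer matrix of odd order $r$ whose diagonal entries are all zero, hence all even. By Lemma~\ref{lem:odddet}, $\det A[T]$ is even. Therefore $b_r$, being $(-1)^r$ times a sum of even integers, is itself even. (For $r = 1$ this also recovers the obvious fact that $b_1 = -\tr A = 0$.)

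Since the argument consists of recalling a textbook identity and invoking an already-stated lemma, there is essentially no obstacle here; the only point requiring the smallest amount of care is making sure the principal-minor expansion of $\chi_A(x)$ is invoked with the correct sign convention, but this does not affect the conclusion, as parity is sign-independent.
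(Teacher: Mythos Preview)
Your proof is correct and is essentially the same approach as the paper's: the paper states the result as a corollary of Lemma~\ref{lem:odddet} without spelling out the details, and the intended argument is precisely the principal-minor expansion you give, applied termwise. The only minor comment is that the paper reserves the notation $M[r]$ for the submatrix obtained by \emph{deleting} row and column $r$, so your local definition of $A[T]$ as the submatrix \emph{on} the index set $T$ is helpful to avoid confusion.
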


We now establish a result similar to Corollary~\ref{cor:evencond}.

\begin{lemma}\label{lem:oddcond1}
	Let $A$ be an adjacency matrix of a graph of order $n$ and write $\chi_{J-2A}(x) = \sum_{i=0}^n a_i x^{n-i}$.
	Then $2^r$ divides $a_r$ for all $r$ even.
\end{lemma}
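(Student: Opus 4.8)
The plan is to deduce the statement from Lemma~\ref{lem:coefficientMap} in exactly the same way that Corollary~\ref{cor:evencond} was deduced, but to supply one extra ingredient to compensate for the fact that $n$ need no longer be even. By Lemma~\ref{lem:coefficientMap},
\[
a_r = (-2)^r\left(b_r + \frac12\sum_{i=1}^r b_{r-i}\,\mathbf{1}^\top A^{i-1}\mathbf{1}\right),
\]
so it suffices to show that $b_r + \tfrac12\sum_{i=1}^r b_{r-i}\,\mathbf{1}^\top A^{i-1}\mathbf{1}$ is an integer, equivalently that $\sum_{i=1}^r b_{r-i}\,\mathbf{1}^\top A^{i-1}\mathbf{1}$ is even.

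First I would dispose of the terms with $i \geqslant 2$. For these, $\mathbf{1}^\top A^{i-1}\mathbf{1}$ with $i-1 \geqslant 1$ is even by Lemma~\ref{lem:jaj} (the diagonal of the adjacency matrix $A$ is zero, hence even), so each such term is even and contributes nothing to the parity of the sum. The only remaining term is the one with $i = 1$, namely $b_{r-1}\,\mathbf{1}^\top A^{0}\mathbf{1} = n\,b_{r-1}$, and this is precisely the one term whose parity is not automatic when $n$ is odd.

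The key point is then that when $r$ is even, $r-1$ is odd, so $b_{r-1}$ is even by Corollary~\ref{cor:evenbr} (again using that $A$ has zero diagonal). Hence $n\,b_{r-1}$ is even regardless of the parity of $n$, so $\sum_{i=1}^r b_{r-i}\,\mathbf{1}^\top A^{i-1}\mathbf{1}$ is even, and therefore $2^r$ divides $a_r$, as required.

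I expect no genuine obstacle here; the only thing one must notice is that the single term $n\,b_{r-1}$ — which was handled for free in Corollary~\ref{cor:evencond} by the evenness of $n$ — must now instead be controlled through the vanishing modulo $2$ of the odd-index coefficients of $\chi_A(x)$, which is exactly what Corollary~\ref{cor:evenbr} provides.
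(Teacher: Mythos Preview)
Your proof is correct and follows essentially the same route as the paper's: reduce via Lemma~\ref{lem:coefficientMap} to showing $\sum_{i=1}^{r} b_{r-i}\,\mathbf{1}^\top A^{i-1}\mathbf{1}$ is even, handle the $i\geqslant 2$ terms with Lemma~\ref{lem:jaj}, and kill the remaining $i=1$ term using Corollary~\ref{cor:evenbr} since $r-1$ is odd. Your commentary on why the extra ingredient is needed (compared with Corollary~\ref{cor:evencond}) is spot on.
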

\begin{proof}
	Let $\chi_{A}(x) = \sum_{i=0}^n b_i x^{n-i}$.
	Using Lemma~\ref{lem:coefficientMap}, it suffices to show that $\sum_{i=1}^{r} b_{r-i} \, \mathbf{1}^\top A^{i-1}  \mathbf{1}$ is even.
	By Lemma~\ref{lem:jaj}, for all $i \geqslant 2$, we have that $\mathbf{1}^\top A^{i-1}  \mathbf{1}$ is even.
	And, by Corollary~\ref{cor:evenbr}, the coefficient $b_{r-1}$ is even for all $r$ even.
\end{proof}

\subsection{An application to Euler graphs} 
\label{sub:applications_to_euler_graphs}


Let $S$ be a Seidel matrix of order $n$ odd.
By Theorem~\ref{thm:OddNEuler}, we may assume that the underlying graph of $S$ is an Euler graph.
We therefore focus on Euler graphs $\Gamma$.

\begin{corollary}\label{cor:btoa}
	Let $\Gamma$ be an Euler graph of order $n$ odd and let $A$ be its adjacency matrix.
	Write $\chi_{J-2A}(x) = \sum_{i=0}^n a_i x^{n-i}$ and $\chi_{A}(x) = \sum_{i=0}^n b_i x^{n-i}$.
	Then, for all $r \in \{1,\dots, \frac{n-1}{2} \}$, we have
	\begin{align*}
		b_{2r} &\equiv \frac{-a_{2r+1}}{2^{2r} n} \mod {4}; \\
		b_{2r-1} &\equiv \frac{a_{2r+1}+a_{2r}+a_{2r-1}a_3}{2^{2r-1}}   \mod {4}.
	\end{align*}
\end{corollary}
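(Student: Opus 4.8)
The plan is to invert the identity of Lemma~\ref{lem:coefficientMap}, which expresses each coefficient $a_i$ of $\chi_{J-2A}$ in terms of the coefficients $b_j$ of $\chi_A$ and the closed-walk counts $w_j := \mathbf{1}^\top A^j \mathbf{1}$, and to carry out this inversion only modulo a suitable power of $2$. First I would assemble the arithmetic inputs: $w_0 = n$ is odd; $w_1 = 2|E(\Gamma)|$ is even and $w_j \equiv 0 \pmod 4$ for $j \geqslant 2$, by Lemma~\ref{lem:wp4}; every $b_j$ with $j$ odd is even, by Corollary~\ref{cor:evenbr} (the diagonal of $A$ is zero); and, reading off the low-order coefficients of $\chi_A$ directly, $b_0 = 1$, $b_1 = 0$ and $b_2 = -|E(\Gamma)|$, whence the identity $w_1 = -2b_2$. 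Finally, since $n$ is odd it is a unit modulo $4$ and $n-1$ is even, so $n b \equiv b \pmod 4$ for every even integer $b$ and $2nb \equiv 2b \pmod 4$ for every integer $b$.

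For the first congruence I would apply Lemma~\ref{lem:coefficientMap} with $r$ replaced by $2r+1$ and separate off the $i=1$ and $i=2$ terms of the inner sum, obtaining
\[
  a_{2r+1} = -2^{2r+1}b_{2r+1} - 2^{2r} n\, b_{2r} - 2^{2r} w_1 b_{2r-1} - 2^{2r}\!\sum_{i=3}^{2r+1}\! b_{2r+1-i}\, w_{i-1}.
\]
In particular $2^{2r}\mid a_{2r+1}$; after dividing by $2^{2r}$ and reducing modulo $4$ the term $2b_{2r+1}$ vanishes because $b_{2r+1}$ is even, while $w_1 b_{2r-1}$ and the final sum vanish because they are divisible by $4$ (here $w_1$ even together with $b_{2r-1}$ even, and $w_{i-1}\equiv 0\pmod 4$ for $i\geqslant 3$). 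Hence $-a_{2r+1}/2^{2r} \equiv n\, b_{2r} \pmod 4$, and dividing by the unit $n$ gives the first congruence (whose right-hand side is to be read in $\mathbb{Z}/4\mathbb{Z}$).

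For the second congruence I would, in the same way, expand $a_{2r}$ and $a_{2r-1}$ and note that $a_3 = -8b_3 - 4nb_2 - 4w_2 = 4(-2b_3 - nb_2 - w_2)$, so that $2^{2r}$ divides $a_{2r-1}a_3$ as well. Dividing each of $a_{2r+1}$, $a_{2r}$ and $a_{2r-1}a_3$ by $2^{2r-1}$ and reducing modulo $4$ — using the evenness facts above and $nb\equiv b$, $2nb\equiv 2b$ — one finds
\[
  \frac{a_{2r+1}}{2^{2r-1}} \equiv -2b_{2r}, \qquad
  \frac{a_{2r}}{2^{2r-1}} \equiv 2b_{2r} + b_{2r-1} + w_1 b_{2r-2}, \qquad
  \frac{a_{2r-1}a_3}{2^{2r-1}} \equiv 2 b_2\, b_{2r-2}
\]
all modulo $4$. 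Summing, the terms $\pm 2b_{2r}$ cancel and there remains $b_{2r-1} + b_{2r-2}(w_1 + 2b_2)$, which equals $b_{2r-1}$ by $w_1 = -2b_2$; this is the second congruence (the case $r=1$ being covered by the conventions $b_{-1}=0$, $b_0 = 1$).

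The step needing most care is the $2$-adic bookkeeping in the displays above: one must confirm that each of $a_{2r+1}/2^{2r}$, $a_{2r}/2^{2r-1}$ and $(a_{2r-1}a_3)/2^{2r-1}$ really is an integer before reducing modulo $4$, and keep track of exactly which summands vanish modulo $4$ and which only modulo $2$. The single non-mechanical observation is that the combination $a_{2r+1}+a_{2r}+a_{2r-1}a_3$ is arranged precisely so that, after dividing by $2^{2r-1}$, all the error terms collapse into a multiple of $w_1 + 2b_2 = \mathbf{1}^\top A\mathbf{1} + 2b_2 = 0$.
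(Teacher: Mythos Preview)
Your argument is correct and follows essentially the same route as the paper's proof: both expand $a_{2r+1}$, $a_{2r}$, and $a_{2r-1}a_3$ via Lemma~\ref{lem:coefficientMap}, reduce modulo $4$ using Lemma~\ref{lem:wp4} and Corollary~\ref{cor:evenbr}, and observe that the cross-terms cancel. The only cosmetic difference is that you invoke the exact identity $w_1 + 2b_2 = 0$ directly, whereas the paper phrases the same cancellation as $a_2 = 4b_2 + 2\mathbf{1}^\top A\mathbf{1} = 0$ and the consequent $a_3 \equiv 2\mathbf{1}^\top A\mathbf{1} \pmod 8$.
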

\begin{proof}
	Using Lemma~\ref{lem:wp4}, Lemma~\ref{lem:coefficientMap}, and Corollary~\ref{cor:evenbr}, we have that 
	\begin{align}
		\frac{a_{2r+1}}{2^{2r-1}} &\equiv -2nb_{2r} \mod {8}; \label{eqn:e1} \\
		\frac{na_{2r}}{2^{2r-1}} &\equiv 2nb_{2r} + b_{2r-1}n^2 + nb_{2r-2} \mathbf{1}^\top A\mathbf{1} \mod {4}; \label{eqn:e2} \\
		a_3 &\equiv -4b_2n \mod 8.  \label{eqn:a3}
	\end{align}
	Furthermore, since $b_1 = 0$, we can write $a_2 = 4b_2 + 2\mathbf{1}^\top A\mathbf{1}$.
	By Remark~\ref{rem:a1a2}, we have $a_2 = 0$, then using \eqref{eqn:a3}, we find that $a_3 \equiv 2n\mathbf{1}^\top A\mathbf{1} \equiv 2\mathbf{1}^\top A\mathbf{1} \mod 8$.
	Hence, 
	\[
		\frac{a_{2r-1}a_3}{2^{2r-1}} \equiv -nb_{2r-2} \mathbf{1}^\top A\mathbf{1} \mod 4.
	\]
	Since, by Lemma~\ref{lem:oddcond1}, $2^r$ divides $a_{2r}$, using \eqref{eqn:e1} and \eqref{eqn:e2}, we can write 
	$$b_{2r-1} \equiv \frac{a_{2r+1}+a_{2r}+a_{2r-1}a_3}{2^{2r-1}}   \mod {4},$$
	as required.
	%
\end{proof}

For a nonzero integer $a$, the $2$\textbf{-adic valuation} $\nu_2(a)$ of $a$ is the multiplicity of $2$ in the prime factorisation of $a$.
The $2$-adic valuation of $0$ is defined to be $\infty$.
For an integer $b$ relatively prime to $a$, the $2$-adic valuation $\nu_2(a/b)$ of $a/b$ is defined as $-\nu_2(b)$ if $b$ is even and $\nu_2(a)$ otherwise.
Denote by $B_2(a)$ the number of $1$s in the binary expansion of $a$.

\begin{lemma}\label{lem:powersmod}
	Let $s$ and $t$ be integers satisfying $s \equiv t \mod 4$.
	Then for all positive integers $m$, we have $s^m \equiv t^m \mod 2^{\nu_2(m)+2}$.
\end{lemma}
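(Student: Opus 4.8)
The plan is to expand $s^m$ binomially and track $2$-adic valuations term by term. Since $s \equiv t \bmod 4$, write $s = t + 4k$ with $k \in \mathbb Z$; then
\[
  s^m - t^m = \sum_{j=1}^{m} \binom{m}{j}\, t^{m-j}\, (4k)^j ,
\]
so it suffices to show that $2^{\nu_2(m)+2}$ divides each summand on the right.

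First I would dispatch the $j=1$ term: it equals $4mk\,t^{m-1}$, whose $2$-adic valuation is at least $\nu_2(m)+2$. For $j \geq 2$, I would use the identity $j\binom{m}{j} = m\binom{m-1}{j-1}$, which gives $\nu_2(\binom{m}{j}) \geq \nu_2(m) - \nu_2(j)$. Since $\nu_2(4^j) = 2j$, the $j$-th summand has $2$-adic valuation at least $\nu_2(m) - \nu_2(j) + 2j$, and hence it is enough to verify $2j - \nu_2(j) \geq 2$ for every $j \geq 2$. This is immediate from $\nu_2(j) \leq \log_2 j \leq j-1$, which yields $2j - \nu_2(j) \geq j+1 \geq 3$. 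Summing over $j$ then gives $s^m \equiv t^m \bmod 2^{\nu_2(m)+2}$.

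There is no real obstacle here; the only point needing a little care is the elementary valuation estimate $\nu_2(\binom{m}{j}) \geq \nu_2(m) - \nu_2(j)$ together with the inequality $2j - \nu_2(j) \geq 2$, both of which are routine. As an alternative I could argue by induction on $\nu_2(m)$: when $\nu_2(m) = 0$ the claim follows from the factorisation $s^m - t^m = (s-t)\sum_{i=0}^{m-1} s^i t^{m-1-i}$ and $4 \mid s-t$; for the inductive step, write $m = 2m'$ and square the congruence $s^{m'} \equiv t^{m'} \bmod 2^{\nu_2(m')+2}$, observing that the cross term picks up one extra factor of $2$ and the remaining square term picks up at least $\nu_2(m')+2 \geq 2$ more, which suffices since $\nu_2(m) = \nu_2(m')+1$. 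I would include whichever writeup is shorter; the binomial expansion looks slightly cleaner.
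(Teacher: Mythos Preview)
Your proof is correct and follows the same overall strategy as the paper: write $s=t+4k$, expand $s^m$ binomially, and bound the $2$-adic valuation of each term $\binom{m}{j}(4k)^j t^{m-j}$. The difference lies only in how the valuation of $\binom{m}{j}$ is controlled. The paper invokes Kummer's theorem, rewriting $\nu_2\bigl(\binom{m}{i}\bigr)$ in terms of binary digit sums $B_2(\cdot)$ and then checking the resulting combinatorial inequality $B_2(m-1)\leqslant 2i-3+B_2(m-i)+B_2(i)$. Your route, via the identity $j\binom{m}{j}=m\binom{m-1}{j-1}$ giving $\nu_2\bigl(\binom{m}{j}\bigr)\geqslant \nu_2(m)-\nu_2(j)$, is more elementary and slightly shorter; it avoids Kummer entirely at the cost of a marginally weaker (but still sufficient) bound. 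Your alternative induction on $\nu_2(m)$ is also valid and self-contained. Either of your writeups would be a clean replacement for the paper's argument.
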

\begin{proof}
	Write $s = t + 4u$ for some integer $u$.
	Since $s^m = t^m + \sum_{i=1}^m \binom{m}{i}(4u)^i t^{m-i}$, it suffices to show that
	\begin{align}
		\nu_2 \left (\binom{m}{i} \right )+2i &\geqslant \nu_2(m)+2, \quad \text{ for all $i \in \{1,\dots,m\}$}. \label{ineq:nu}
	\end{align}
	By a theorem of Kummer~\cite{Kummer52}, we have $\nu_2(\binom{m}{i}) = B_2(m-i)+B_2(i)-B_2(m)$.
	Hence, \eqref{ineq:nu} becomes $B_2(m-1) \leqslant 2i-3+B_2(m-i)+B_2(i)$ for all $i \in \{1,\dots,m\}$.
	
	Observe that the number of $1$s in the binary expansion of $m-i$ is at least $B_2(m-1)-B_2(i-1)$.
	Thus, we have $B_2(m-1) \leqslant B_2(m-i)+B_2(i-1)$.
	Finally, it is straightforward to verify the inequality $B_2(i-1) \leqslant 2i-3+B_2(i)$ for all $i \in \{1,\dots,m\}$.
\end{proof}

We will also need the next lemma in preparation for the subsequent result.

\begin{lemma}\label{lem:multinomial2val}
	Let $l$ be a positive integer and let $m_1, m_2, \dots, m_{l}$ be nonnegative integers having a positive sum.
	Let $m \in \{m_i \; | \; m_i \ne 0 \}$.
	Then
	\[
		\nu_2 \left (\frac{(m_1+m_2+\dots+m_{l}-1)!}{m_1! m_2! \cdots m_{l}!} \right ) \geqslant 
			-\nu_2(m).
	\]
\end{lemma}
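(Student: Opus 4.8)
The plan is to massage the ratio into a genuine multinomial coefficient, whose $2$-adic valuation is automatically nonnegative, by peeling off a single factor of $m$. Set $M = m_1 + m_2 + \dots + m_l$ (so $M \geqslant 1$ by hypothesis) and fix an index $j$ with $m_j = m \neq 0$.

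The key step is the elementary identity
\[
	\frac{(M-1)!}{m_1!\, m_2! \cdots m_l!} = \frac{1}{m_j} \cdot \frac{(M-1)!}{m_1! \cdots m_{j-1}!\,(m_j-1)!\,m_{j+1}! \cdots m_l!}.
\]
Because $m_j = m \geqslant 1$, the factorial $(m_j-1)!$ is defined, and the lower arguments $m_1,\dots,m_{j-1},\,m_j-1,\,m_{j+1},\dots,m_l$ are nonnegative integers with sum exactly $M-1$. Hence the second factor on the right is the multinomial coefficient $\binom{M-1}{m_1,\dots,m_j-1,\dots,m_l}$, which is an integer; in particular its $2$-adic valuation is at least $0$.

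Finally, I would invoke additivity of $\nu_2$ to conclude
\[
	\nu_2\!\left(\frac{(M-1)!}{m_1! \cdots m_l!}\right) = -\nu_2(m_j) + \nu_2\!\left(\binom{M-1}{m_1,\dots,m_j-1,\dots,m_l}\right) \geqslant -\nu_2(m),
\]
as claimed. There is no real obstacle here: the whole content is the observation that removing one unit from one of the $m_i$ leaves the numerator and the denominator in the shape of a multinomial coefficient. (One can also note that the bound is attained, e.g.\ when $l=1$ and $m_1 = m$, where the ratio equals $1/m$.)
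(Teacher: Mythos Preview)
Your proof is correct and follows the same idea as the paper's: factor out $1/m$ from the ratio to leave behind a genuine multinomial coefficient, which is an integer and hence has nonnegative $2$-adic valuation. The paper carries this out by splitting the remaining quotient into a product of two multinomial coefficients (a binomial $\binom{s+m_1-1}{s}$ with $s=\sum_{i\geqslant 2}m_i$ and the multinomial $\binom{s}{m_2,\dots,m_l}$), whereas you go directly to the single multinomial $\binom{M-1}{m_1,\dots,m_j-1,\dots,m_l}$; the arguments are essentially equivalent.
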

\begin{proof}
	Without loss of generality, assume that $m= m_1$ and let $s = \sum_{i=2}^{l}m_i$.
	Observe that
	\[
		\frac{(m_1+m_2+\dots+m_{l}-1)!}{m_1! m_2! \dots m_{l}!} = \frac{(s+m_1-1)\dots(s+1)}{m_1!}\frac{s!}{m_2! \cdots m_{l}!}
	\]
	Clearly the right hand side is the product of $1/m$ and multinomial coefficients.
\end{proof}

Next we show a congruence between coefficients of the characteristic polynomial of $J-2A$ where $A$ is the adjacency matrix of an Euler graph.

\begin{lemma}\label{lem:adet}
	Let $\Gamma$ be an Euler graph of order $n$ odd, let $A$ be its adjacency matrix, and suppose that $\chi_{J-2A}(x) = \sum_{i=0}^n a_i x^{n-i}$.
	Then, for $k \in \{2,\dots,(n-1)/2\}$, we have 
	\begin{align*}
		a_{2k+1} \equiv \sum_{\substack{d \;|\; 2k}} \;\; \sum_{\substack{m_1 + 2 m_2 + \dots + d m_{d} = d \\ m_1 \geqslant 0, \dots, m_{d}\geqslant 0 \\ m_{2k} = 0 } } C_d(m_1,\dots,m_{d}) P_d(m_1,\dots,m_{d})  \mod{2^{2k+1}},
	\end{align*}
	where
	\[
		C_d(m_1,\dots,m_{d}) :=  2^{2k} \frac{d\varphi(2k/d)}{2k} \frac{(m_1+m_2+\dots+m_{d}-1)!}{m_1! m_2! \cdots m_{d}!}
	\]
	and
	\[
		P_d(m_1,\dots,m_{d}) := \prod_{j=1}^{d/2}  \left (\frac{a_{2j+1}}{2^{2j} n} \right )^{m_{2j}}\prod_{j=1}^{d/2+1}  \left (\frac{a_{2j+1}+a_{2j}+a_{2j-1}a_3}{2^{2j-1}} \right )^{m_{2j-1}}.
	\]
\end{lemma}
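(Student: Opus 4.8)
The plan is to bootstrap from a congruence for a single coefficient of $\chi_A(x)$. Write $\chi_A(x)=\sum_{i=0}^n b_i x^{n-i}$. Applying Corollary~\ref{cor:btoa} with $r=k$ and using that $n$ is odd gives $a_{2k+1}\equiv -2^{2k}b_{2k}\mod{2^{2k+1}}$, so it is enough to determine $b_{2k}$ modulo $2$. For this I would combine three standard facts: Newton's identities, giving $2k\,b_{2k}=-\tr(A^{2k})-\sum_{i=1}^{2k-1}b_i\tr(A^{2k-i})$; Lemma~\ref{lem:trrelEuler} with $N=2k$, which replaces $\tr(A^{2k})$ by $-\sum_{d\mid 2k,\,d\ne 2k}\varphi(2k/d)\tr(A^d)$ modulo $4k$; and the Waring identity expressing $\tr(A^d)$ as $d$ times the sum of the terms $(-1)^{m_1+\cdots+m_d}\frac{(m_1+\cdots+m_d-1)!}{m_1!\cdots m_d!}\prod_{l=1}^{d} b_l^{m_l}$ over all solutions of $m_1+2m_2+\cdots+dm_d=d$ (the single partition $m_{2k}=1$ of $2k$ accounting for exactly $-2k\,b_{2k}$). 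Recombining, using $\nu_2(4k)=\nu_2(2k)+1$ to pass to $b_{2k}$ and hence to $a_{2k+1}$ modulo $2^{2k+1}$, and noting that the coefficients assemble into $C_d(m_1,\dots,m_d)$ (the odd parts of $2k$ and of $n$ being $2$-adic units), one arrives at
\[
a_{2k+1}\equiv\varepsilon\sum_{d\mid 2k}\ \sum_{\substack{m_1+2m_2+\cdots+dm_d=d\\ m_{2k}=0}} C_d(m_1,\dots,m_d)\,(-1)^{m_1+\cdots+m_d}\prod_{l=1}^{d}b_l^{m_l}\mod{2^{2k+1}},
\]
where $\varepsilon$ is an explicit odd integer, namely the negative of the odd part of $2k$.

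The next step is to replace $\prod_l b_l^{m_l}$ by $P_d(m_1,\dots,m_d)$. By Corollary~\ref{cor:btoa} each factor of $P_d$ is a $2$-adic integer congruent modulo $4$ to $-b_{2j}$ or to $b_{2j-1}$, so $P_d\equiv(-1)^{e(m)}\prod_l b_l^{m_l}\mod{4}$, where $e(m)=m_2+m_4+\cdots$. More precisely, Lemma~\ref{lem:powersmod} upgrades a mod-$4$ agreement of $2$-adic integers to an agreement of their $m$-th powers modulo $2^{\nu_2(m)+2}$, so the error $P_d-(-1)^{e(m)}\prod_l b_l^{m_l}$ has $2$-adic valuation at least $\mu+2$, where $\mu=\min_{l:\,m_l\ne0}\nu_2(m_l)$. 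On the other hand Lemma~\ref{lem:multinomial2val} gives $\nu_2\!\bigl(\tfrac{(m_1+\cdots+m_d-1)!}{m_1!\cdots m_d!}\bigr)\geqslant-\mu$, and a short computation shows $\nu_2\bigl(d\varphi(2k/d)\bigr)\geqslant\nu_2(2k)-1$ for every $d\mid 2k$; hence $\nu_2\bigl(C_d(m_1,\dots,m_d)\bigr)\geqslant 2k-1-\mu$, so each product $C_d\cdot(\text{error})$ is divisible by $2^{2k+1}$. Consequently the congruence above becomes $a_{2k+1}\equiv\varepsilon\sum_{d,m}C_d(m_1,\dots,m_d)\,(-1)^{o(m)}P_d(m_1,\dots,m_d)\mod{2^{2k+1}}$, where $o(m)=m_1+m_3+\cdots$ is the number of odd parts and we have used $(-1)^{m_1+\cdots+m_d}(-1)^{e(m)}=(-1)^{o(m)}$.

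Comparing with the assertion, the proof now reduces to showing $\sum_{d,m}C_d(m_1,\dots,m_d)\,P_d(m_1,\dots,m_d)\bigl(1-\varepsilon(-1)^{o(m)}\bigr)\equiv 0\mod{2^{2k+1}}$; note $1-\varepsilon(-1)^{o(m)}$ is even since $\varepsilon$ is odd. I expect this to be the main obstacle. For partitions with at least one odd part the estimate is easy: Corollary~\ref{cor:evenbr} makes every odd-indexed $b_l$ even, so after re-replacing $P_d$ by the $2$-adically equal $\pm\prod_l b_l^{m_l}$ this product is divisible by $2^{o(m)}$, and choosing an odd part $l$ with $m_l\ne0$ one has $o(m)\geqslant m_l\geqslant\nu_2(m_l)+1$, which combined with $\nu_2(C_d)\geqslant 2k-1-\nu_2(m_l)$ and the extra factor of $2$ from $1-\varepsilon(-1)^{o(m)}$ already forces $2$-adic valuation $\geqslant 2k+1$. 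The partitions into even parts only admit no such gain and must be treated together: halving every part identifies them with partitions of the divisors $d'$ of $k$, and, after removing the excluded partition $m_{2k}=1$, the corresponding part of the sum reorganises into $\tfrac{2^{2k}}{k}\sum_{d'\mid k}\varphi(k/d')\,\pi_{d'}$ plus a term of valuation at least $2k$, where $\pi_{d'}$ denotes the $d'$-th power sum of the roots of the monic integer polynomial $\sum_{j\geqslant0}b_{2j}\,y^{\lfloor n/2\rfloor-j}$. A form of Gauss's congruence — $\sum_{d\mid N}\varphi(N/d)\pi_d\equiv 0\mod{N}$ for the power sums $\pi_d$ of any monic integer polynomial — then makes this divisible by $2^{2k}$, and the remaining factor of $2$ is supplied by $1-\varepsilon$. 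Assembling the two cases yields the claimed formula.
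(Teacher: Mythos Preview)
Your outline follows the paper's proof in its essentials---Waring's formula for $\tr(A^d)$ together with Lemma~\ref{lem:trrelEuler}, then Corollary~\ref{cor:btoa} and Lemma~\ref{lem:powersmod} to pass to $P_d$---but the first step is muddled and produces a spurious unit $\varepsilon$.  The single-step Newton recursion $2k\,b_{2k}=-\tr(A^{2k})-\sum_{i=1}^{2k-1}b_i\tr(A^{2k-i})$ that you quote involves $\tr(A^{j})$ for \emph{every} $j<2k$, not only the divisors of $2k$, so it does not combine with Lemma~\ref{lem:trrelEuler} to give a sum indexed by $d\mid 2k$.  What is actually needed is only the Girard--Waring formula, applied once with $d=2k$ (the partition $m_{2k}=1$ contributes exactly $-2kb_{2k}$, so $b_{2k}+\tr(A^{2k})/(2k)$ equals the Waring sum over partitions with $m_{2k}=0$) and once for each proper divisor.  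Carried out this way one gets directly
\[
a_{2k+1}\;\equiv\;\sum_{d\mid 2k}\ \sum_{\substack{m\\ m_{2k}=0}} C_d(m)\,\prod_{j}(-b_j)^{m_j}\pmod{2^{2k+1}},
\]
i.e.\ your displayed formula with $\varepsilon=1$; the ``negative of the odd part of $2k$'' does not arise.

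For the second step the paper is more efficient than your route.  You correctly observe that the even-index factors of $P_d$ are $\equiv -b_{2j}\pmod 4$ and the odd-index ones $\equiv b_{2j-1}\pmod 4$, which leaves a residual sign $(-1)^{o(m)}$.  The paper removes it at the source: by Corollary~\ref{cor:evenbr} each $b_{2j-1}$ is even, so $b_{2j-1}\equiv -b_{2j-1}\pmod 4$, and hence the odd-index factors of $P_d$ are congruent to $-b_{2j-1}$ as well.  Thus $\prod_j(-b_j)^{m_j}\equiv P_d\pmod{2^{\mu+2}}$ with no sign, and combining with $\nu_2(C_d)\geqslant 2k-1-\mu$ finishes the proof immediately.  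Your estimate for partitions with at least one odd part is essentially this same observation pushed to the product level, so that piece is fine; but once $\varepsilon=1$ the even-parts case contributes zero to your difference $\sum C_dP_d\bigl(1-\varepsilon(-1)^{o(m)}\bigr)$, and the entire Gauss-congruence detour---whose sketch, as written, has its own unaddressed sign issues and an unjustified ``plus a term of valuation at least $2k$''---is unnecessary.
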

\begin{proof}
	First write $\chi_{A}(x) = \sum_{i=0}^n b_i x^{n-i}$.
		By Newton's identities, we have
		\begin{align}
				\tr( A^{i} ) &= \sum_{\substack{m_1 + 2 m_2 + \dots + i m_i = i \\ m_1 \geqslant 0, \dots, m_i \geqslant 0 } } \frac{i(m_1+m_2+\dots+m_i-1)!}{m_1! m_2! \cdots m_i!} \prod_{j=1}^i  (-b_j)^{m_j}. \label{eqn:New1} 
		\end{align}
		Using Lemma~\ref{lem:coefficientMap} together with Lemma~\ref{lem:wp4}, we have $a_{2k+1} \equiv 2^{2k}b_{2k} \mod{2^{2k+1}}$.
		Now, writing $b_{2k} = b_{2k} + \tr(A^{2k})/2k - \tr(A^{2k})/2k$, apply Lemma~\ref{lem:trrelEuler} to obtain the congruence
		\begin{align}
			a_{2k+1} &\equiv 2^{2k} \left (  b_{2k} + \frac{\tr(A^{2k})}{2k} +\sum_{\substack{d \;|\; 2k \\ d \ne 2k}} \varphi(2k/d)  \frac{\tr(A^d)}{2k} \right ) \mod{2^{2k+1}}. \label{eqn:maincong}
		\end{align}
		
		Observe that, using \eqref{eqn:New1}, we can write
		\[
			  2^{2k}\left (  b_{2k}+\frac{\tr(A^{2k})}{2k} \right ) =  \sum_{\substack{m_1 + \dots + 2k m_{2k} = 2k \\ m_1 \geqslant 0, \dots, m_{2k-1}\geqslant 0 \\ m_{2k} = 0 } } C_{2k}(m_1,\dots,m_{2k-1},m_{2k}) \prod_{j=1}^{2k-1}  (-b_j)^{m_j}.
		\]
		
		For fixed integers $m_1,\dots,m_{2k-1}$, define $\nu = \min(\{\nu_2(m_i) \; | \; m_i \ne 0 \})$ and let $m \in \{m_i \; | \; \nu_2(m_i) = \nu \}$.
		By Lemma~\ref{lem:multinomial2val}, 
		we have 
		$$\nu_2(C_{2k}(m_1,\dots,m_{2k-1},0)) \geqslant 2k-\nu_2(m).$$
		
		Observe that, for odd $j$, by Corollary~\ref{cor:evenbr}, we have $b_j \equiv -b_j \mod 4$.
		Now, using Corollary~\ref{cor:btoa} together with Lemma~\ref{lem:powersmod}, we can write 
		\[
			\prod_{j=1}^{2k-1}  (-b_j)^{m_j} \equiv P_{2k}(m_1,\dots,m_{2k-1},0) \mod{2^{\nu_2(m)+2}}.
		\]
		Since the $2$-adic valuation of $C_{2k}(m_1,\dots,m_{2k-1},0)$ is at least $2k-\nu_2(m)$, 
		it follows that $2^{2k}\left (  b_{2k} + \frac{\tr(A^{2k})}{2k} \right )$ is congruent modulo $2^{2k+2}$ to 
		\[
			\sum_{\substack{m_1 + 2 m_2 + \dots + 2k m_{2k} = 2k \\ m_1 \geqslant 0, \dots, m_{2k-1}\geqslant 0 \\ m_{2k} = 0 } } C_{2k}(m_1,\dots,m_{2k}) P_{2k}(m_1,\dots,m_{2k}).
		\]
		
		Next, for $d$ a proper divisor of $2k$, using \eqref{eqn:New1}, we can write
		\[
			 2^{2k} \varphi(2k/d) \frac{\tr(A^d)}{2k}  =  \sum_{\substack{m_1 + 2 m_2 + \dots + d m_{d} = d \\ m_1 \geqslant 0, \dots, m_{d}\geqslant 0 } } C_d(m_1,\dots,m_{d}) \prod_{j=1}^{d}  (-b_j)^{m_j}.
		\]
		Again, for fixed integers $m_1,\dots,m_{2k}$ define $\nu = \min(\{\nu_2(m_i) \; | \; m_i \ne 0 \})$ and let $m \in \{m_i \; | \; \nu_2(m_i) = \nu \}$.	
		This time, we have the lower bound, 
		
		$$\nu_2(C_d(m_1,\dots,m_{d})) \geqslant 2k-\nu_2(m)-1.$$
		Indeed, using Euler's formula, we see that 
		\[
			\varphi(2k/d)\frac{d}{2k} = \prod_{\substack{p \, | \, 2k/d \\ p \text{ prime}}} \frac{p-1}{p}
		\]
		and hence $\nu_2(d\varphi(2k/d)/2k) \geqslant -1$.
		Then, by Lemma~\ref{lem:multinomial2val}, the $2$-valuation of $C_{d}(m_1,\dots,m_{d})$ has lower bound  $\nu_2(C_d(m_1,\dots,m_{d})) \geqslant 2k-\nu_2(m)-1$.
		Using Corollary~\ref{cor:btoa} together with Lemma~\ref{lem:powersmod}, we can write 
		\[
			\prod_{j=1}^{d}  (-b_j)^{m_j} \equiv P_{d}(m_1,\dots,m_{d}) \mod{2^{\nu_2(m)+2}}.
		\]
		Hence $2^{2k} \varphi(2k/d) \tr(A^d)/2k$ is congruent modulo $2^{2k+1}$ to
				\[
					 \sum_{\substack{m_1 + 2 m_2 + \dots + d m_{d} = d \\ m_1 \geqslant 0, \dots, m_{d}\geqslant 0 } } C_{d}(m_1,\dots,m_{d}) P_{d}(m_1,\dots,m_{d}).
				\]
		The lemma then follows from \eqref{eqn:maincong}.
		\end{proof}
		
		Now we can bound the number of congruence classes modulo $2^e \mathbb Z[x]$ of characteristic polynomials of Seidel matrices of odd order.
		
		\begin{corollary}\label{cor:countCharPolySeidelOdd}
			Let $n$ be an odd integer and $e$ be a positive integer.
			Then the cardinality of $\mathcal P_{n,e}$ is at most $2^{\binom{e-2}{2}+1}$.
		\end{corollary}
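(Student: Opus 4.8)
The plan is to reduce the count to Euler graphs and then run a ``triangular'' count over the coefficient tuples, using only divisibility facts already established. First I would use Theorem~\ref{thm:OddNEuler}: since switching $S \mapsto DSD$ leaves $\chi_S(x)$ unchanged, every class in $\mathcal P_{n,e}$ is represented by $\chi_S(x)$ for a Seidel matrix $S = J-I-2A$ whose underlying graph is an Euler graph with adjacency matrix $A$. As $\chi_S(x) = \chi_{J-2A}(x+1)$ and $p(x) \mapsto p(x+1)$ is a ring automorphism of $\mathbb Z[x]$ preserving $2^e\mathbb Z[x]$, it induces a bijection of $\mathbb Z[x]/2^e\mathbb Z[x]$; so it is enough to bound the number of residues $\chi_{J-2A}(x) \bmod 2^e\mathbb Z[x]$ as $A$ ranges over adjacency matrices of Euler graphs of order $n$. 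Writing $\chi_{J-2A}(x) = \sum_{i=0}^n a_ix^{n-i}$, Remark~\ref{rem:a1a2} fixes $a_0 = 1$, $a_1 = -n$, $a_2 = 0$, so I would be left to count the tuples $(a_3,\dots,a_n)$ modulo $2^e$.

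Next I would record the constraints on these coefficients. For every even $r$, Lemma~\ref{lem:oddcond1} gives $2^r \mid a_r$. For $r = 3$, Lemma~\ref{lem:coefficientMap} (with $\chi_A(x) = \sum_i b_ix^{n-i}$, where $b_0 = 1$ and $b_1 = 0$) yields $a_3 = -8b_3 - 4b_2n - 4\,\mathbf 1^\top A^2\mathbf 1$, and since $b_3$ is even by Corollary~\ref{cor:evenbr} this gives $4 \mid a_3$; I expect nothing stronger in general, and this single ``lost'' power of $2$ is exactly what separates the odd-order bound from the even-order one. For odd $r = 2k+1$ with $2 \leqslant k \leqslant (n-1)/2$, Lemma~\ref{lem:adet} expresses $a_{2k+1} \bmod 2^{2k+1}$ as a fixed expression in $a_3,\dots,a_{2k}$ alone (the apparent occurrence of $a_{2k+1}$ inside $P_{2k}$ being annihilated by the constraint $m_{2k} = 0$); hence once $a_3,\dots,a_{2k}$ are fixed modulo $2^e$ the residue $a_{2k+1} \bmod 2^{\min(2k+1,\,e)}$ is determined.

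Then I would assemble the count. Building the tuple one coordinate at a time, for each $r \in \{3,\dots,n\}$ the residue $a_r \bmod 2^{d_r}$ is determined by $a_3,\dots,a_{r-1}$, where $d_3 = 2$ and $d_r = r$ for $4 \leqslant r \leqslant n$; so $a_r$ takes at most $2^{\max(e-d_r,\,0)}$ values modulo $2^e$. Multiplying over $r$,
\[
|\mathcal P_{n,e}| \leqslant 2^{\max(e-2,\,0)}\prod_{r=4}^{n}2^{\max(e-r,\,0)} \leqslant 2^{\,\max(e-2,\,0)+\binom{e-3}{2}},
\]
whose exponent equals $(e-2)+\binom{e-3}{2} = \binom{e-2}{2}+1$ for $e \geqslant 2$ (using $\binom{e-2}{2} = \binom{e-3}{2}+(e-3)$), the case $e = 1$ being trivial since then the bound is $2$.

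The hard part is not the arithmetic here — Lemma~\ref{lem:adet} is the real input, and it is already proved — but the bookkeeping: getting the Euler-graph reduction and the change of variable $x \mapsto x+1$ exactly right, pinning down $\nu_2(a_3)$ (neither more nor less than $2$), and confirming that the right-hand side of Lemma~\ref{lem:adet} couples $a_{2k+1}$ only to strictly earlier coefficients, so that the multiplicative count is legitimate rather than circular.
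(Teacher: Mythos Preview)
Your argument is essentially the paper's own: reduce to Euler graphs via Theorem~\ref{thm:OddNEuler}, pass to $\chi_{J-2A}(x)$, and count residues of the $a_r$ using Lemma~\ref{lem:oddcond1} for even $r$ and Lemma~\ref{lem:adet} for odd $r\geqslant 5$, with $a_3$ only pinned down modulo $4$. The paper phrases the same count as an induction on $e$ (each class in $\mathcal P_{n,e}$ lifting to at most $2^{e-2}$ classes in $\mathcal P_{n,e+1}$); your direct product over $r$ is an equivalent repackaging, and the final binomial arithmetic is correct.

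One point needs tightening. You assert that the constraint $m_{2k}=0$ annihilates every occurrence of $a_{2k+1}$ on the right-hand side of Lemma~\ref{lem:adet}. It kills the occurrence in the first product of $P_{2k}$ (the factor $(a_{2k+1}/2^{2k}n)^{m_{2k}}$), but $a_{2k+1}$ also appears in the \emph{second} product at $j=k$, inside $\bigl((a_{2k+1}+a_{2k}+a_{2k-1}a_3)/2^{2k-1}\bigr)^{m_{2k-1}}$, and $m_{2k-1}$ is not forced to vanish. The only admissible tuple with $m_{2k-1}\geqslant 1$ is $(m_1,\dots,m_{2k})=(1,0,\dots,0,1,0)$, for which $C_{2k}P_{2k}=a_3(1-n)\bigl(a_{2k+1}+a_{2k}+a_{2k-1}a_3\bigr)$. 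Now $4\mid a_3$, $2\mid(1-n)$, and (from Lemma~\ref{lem:coefficientMap} applied to the Euler graph, using Lemma~\ref{lem:wp4} and Corollary~\ref{cor:evenbr}) one has $2^{2k}\mid a_{2k+1}$, $2^{2k}\mid a_{2k}$, $2^{2k-2}\mid a_{2k-1}$; hence this whole term is divisible by $2^{2k+3}$ and vanishes modulo $2^{2k+1}$. With that check in place, the right-hand side of Lemma~\ref{lem:adet} indeed depends only on $a_3,\dots,a_{2k}$ modulo $2^{2k+1}$, and your triangular count goes through.
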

		\begin{proof}
			By Remark~\ref{rem:a1a2}, the sets $\mathcal P_{n,1}$ and $\mathcal P_{n,2}$ both have cardinality $1$.
			Moreover, by Lemma~\ref{lem:coefficientMap} and Lemma~\ref{lem:oddcond1}, $\mathcal P_{n,3}$ has cardinality at most $2$.
			Assume that $\mathcal P_{n,e}$ has cardinality at most $2^{\binom{e-2}{2}+1}$.
			
			It suffices to show that each polynomial in $\mathcal P_{n,e}$ can be lifted to at most $2^{e-2}$ polynomials in $\mathcal P_{n,e+1}$.
			Let $S$ be a Seidel matrix of order $n$.
			By Theorem~\ref{thm:OddNEuler}, there is a unique Euler graph $\Gamma$ in the switching class of $S$.
			Let $A$ be the adjacency matrix of $\Gamma$.
			Then $\chi_{J-I-2A}(x) \mod 2^e \mathbb Z[x]$ is an element of $\mathcal P_{n,e}$.
			Write $\chi_{J-2A}(x) = \sum_{i=0}^n a_i x^{n-i}$. 
			Then, by Lemma~\ref{lem:coefficientMap}, for all $i \geqslant e+2$ we have $a_i \equiv 0 \mod 2^{e+1}$.
			By Remark~\ref{rem:a1a2}, we have $a_0 = 1$, $a_1= -n$, and $a_2 = 0$.
			
			Assume that the congruence class of each of $a_3, a_4, \dots, a_{e}$ modulo $2^{e}$ is given.
			Then, for each of $a_3, a_4, \dots, a_{e}$, there are two possibilities for its congruence class modulo $2^{e+1}$.
			Furthermore, if $e+1$ is even then, by Lemma~\ref{lem:oddcond1}, the coefficient $a_{e+1}$ is divisible by $2^{e+1}$.
			On the other hand, if $e+1$ is odd, then by Lemma~\ref{lem:adet}, the congruence class of $a_{e+1}$ is determined by that of $a_3, a_4, \dots, a_{e}$.
			Therefore, given that the congruence class of $\chi_{J-2A}(x)$ is fixed modulo $2^e \mathbb Z[x]$, we have that there are $2^{e-2}$ possibilities for congruence class of $\chi_{J-2A}(x)$ modulo $2^{e+1} \mathbb Z[x]$.
		\end{proof}
		
		Just as for Corollary~\ref{cor:countCharPolySeidelEven}, for small values of $e$ ($e \leqslant 7$), we have checked that there exist odd $n$ giving equality in the bound in Corollary~\ref{cor:countCharPolySeidelOdd}.
		The sharpness of this bound for $n=49$ and $e=5$ is a crucial ingredient in the proofs of Theorem~\ref{thm:first_pol} and Theorem~\ref{thm:second_pol}.
		Furthermore, we conjecture that, for all integers $e \geqslant 3$, there exists $N \in \mathbb N$ such that $|\mathcal P_{n,e}| = 2^{\binom{e-2}{2}+1}$ for all odd $n \geqslant N$.
		
		
		\section{On 50 equiangular lines in $\mathbb R^{17}$} 
		\label{sec:50lines}
		
		\subsection{From equiangular lines to Seidel matrices} 
		\label{sub:from_equiangular_lies_to_seidel_matrices}
		
		
		Now we apply our restrictions on the characteristic polynomial of a Seidel matrix to equiangular lines.
		Suppose we have a system $\mathcal L$ of $50$ equiangular lines in $\mathbb R^{17}$.
		Let $\{ \mathbf{v}_1,\dots, \mathbf{v}_{50} \}$ be a set of unit spanning vectors for the lines in $\mathcal L$.
		The inner product of any two distinct vectors $\mathbf{v}_i^\top \mathbf{v}_j = \pm \alpha$ for some $\alpha$ in the open interval $(0,1)$.
		The Gram matrix $G$ for this set of vectors has diagonal entries equal to $1$ and off-diagonal entries equal to $\pm \alpha$.
		The matrix $S = (G-I)/\alpha$ is the Seidel matrix corresponding to the system of lines $\mathcal L$.
		Note that the smallest eigenvalue of $S$ is $-1/\alpha$ with multiplicity $50-17=33$.
		By \cite[Theorem 3.4.2]{lemmens73} together with \cite[Lemma 6.1]{vLintSeidel66}, we must have $1/\alpha = 5$.
		Thus the smallest eigenvalue of $S$ is $-5$ with multiplicity $33$.
		
		The purpose of this section is to prove the following proposition.
		
		\begin{proposition}\label{pro:onlyints}
			Suppose $S$ is a Seidel matrix of order $50$ having smallest eigenvalue $-5$ with multiplicity $33$.
			Then the characteristic polynomial $\chi_S(x)$ of $S$ must be one of the three polynomials
			\begin{align*}
				&(x + 5)^{33} (x - 9)^{10} (x - 11)^{5} (x^2 - 20x + 95), \\ 
				&(x+5)^{33}(x-9)^{12}(x-11)^4(x-13),  \\ 
				& \text{ or }(x+5)^{33}(x-7)(x-9)^9(x-11)^7.
			\end{align*}
		\end{proposition}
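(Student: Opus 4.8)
The plan is to combine the spectral constraints coming from the hypothesis with the modular restrictions on $\chi_S(x)$ established in Section~\ref{sec:a_modular_characterisation_of_the_characteristic_polynomial_of_a_seidel_matrix}, and then to carry out a finite search. Write $\chi_S(x) = (x+5)^{33} q(x)$, where $q(x)$ is a monic integer polynomial of degree $17$ whose roots are the remaining eigenvalues of $S$, all of which exceed $-5$. First I would record the three elementary trace identities: $\tr S = 0$, $\tr S^2 = n(n-1) = 50\cdot 49 = 2450$, and $\tr S^3 = $ (six times the number of certain triangles, but in any case) a quantity we can bound. Concretely, $-165 + \sum \mu_i = 0$ gives $\sum \mu_i = 165$ (sum over the $17$ eigenvalues $\mu_i$ of $q$), and $33\cdot 25 + \sum \mu_i^2 = 2450$ gives $\sum \mu_i^2 = 1625$. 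These two equations, together with the Cauchy–Schwarz bound $(\sum\mu_i)^2 \leqslant 17\sum\mu_i^2$ (here $165^2 = 27225 \leqslant 17\cdot 1625 = 27625$, which just barely holds), already force the $\mu_i$ into a narrow range around their mean $165/17 \approx 9.7$.

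Next I would invoke interlacing and known structure theory for Seidel matrices with smallest eigenvalue $-5$ to argue that the remaining eigenvalues are (almost) integers: the relevant reference is the material around \cite{lemmens73} quoted in the text, and more specifically the fact that a Seidel matrix with an eigenvalue of multiplicity $33$ and smallest eigenvalue $-5$ is very rigid. The key structural input is that $\mu_i > -5$ for all $i$, combined with $\sum\mu_i = 165$ and $\sum\mu_i^2 = 1625$. If one can show all $\mu_i$ are rational, then they are integers (algebraic integers that are rational), and the problem reduces to: find all multisets of $17$ integers, each $> -5$, with prescribed sum $165$ and sum of squares $1625$, such that the resulting $\chi_S(x)$ satisfies the mod-$2^e$ congruence constraints. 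The congruence constraints are where Corollary~\ref{cor:countCharPolySeidelEven} enters: since $n = 50$ is even, $\chi_S(x) \equiv (x+1)^{50} \bmod 2\mathbb{Z}[x]$, and more refined congruences modulo higher powers of $2$ (from the same corollary and Lemma~\ref{lem:coefficientMap}, Lemma~\ref{lem:oddcond1}) drastically cut down the candidate list.

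The main obstacle, I expect, is showing that $q(x)$ splits into linear factors over $\mathbb{Q}$ — or, more precisely, handling the one polynomial in the list, $(x+5)^{33}(x-9)^{10}(x-11)^5(x^2-20x+95)$, which has an irreducible quadratic factor $x^2 - 20x + 95$ with roots $10\pm\sqrt{5}$. So the splitting is \emph{not} complete, and the argument must instead bound the number and degree of the non-rational factors. The natural route is: the non-integer eigenvalues come in Galois-conjugate blocks summing to $165 - (\text{integer part})$ and contributing the remainder of $1625$; since each conjugate exceeds $-5$ and the total "budget" for sum of squares is only $1625$, only very small quadratic (or higher) blocks are possible, and one checks that a single quadratic block of the form $x^2 - 20x + 95$ is essentially forced in the non-split case. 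After this structural reduction, the residual verification is a finite enumeration: list integer multisets meeting the two trace conditions, impose the $2$-adic congruences on the coefficients, and confirm only the three stated polynomials survive. I would present the enumeration as a short table rather than grinding through every case.
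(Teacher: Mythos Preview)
Your setup is fine and your final step (sieving with the $2$-adic congruences from Corollary~\ref{cor:evencond}) is what the paper does too. But the middle of your argument has a real gap: the ``budget'' reasoning for why only small non-rational blocks can occur is not an argument. Knowing $\sum\mu_i=165$, $\sum\mu_i^2=1625$, and $\mu_i>-5$ gives $\sum(\mu_i-10)^2=25$, so all $\mu_i$ lie in $[5,15]$; but that by itself does not stop $q(x)$ from having, say, a single irreducible factor of degree $17$ with all roots in $[5,15]$. Interlacing and the Lemmens--Seidel material you invoke do not supply rationality either.

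The paper closes this gap with a different idea that you are missing entirely. It passes to $G(x)=\prod_i\bigl(x-(\mu_i-10)^2\bigr)$, a totally positive monic integer polynomial of degree $17$ and trace $25$, so each irreducible factor has trace minus degree at most $8$. Now genuine number-theoretic input kicks in: McKee's bound (Theorem~\ref{thm:trBnd}) forces any irreducible totally positive factor of degree $d\geqslant5$ to have trace at least $\lceil 1.78839d\rceil$, which caps the degree at $10$; combined with Lemma~\ref{lem:irrsquare} (factors with $g(x^2)$ irreducible must occur squared) the cap drops to $5$ in that sub-case. The finitely many candidate factors are then produced by the Robinson--Smyth--McKee enumeration algorithm (Section~\ref{sub:polynomial_enumeration_algorithm}), yielding the explicit tables, from which one assembles $55$ candidates for $G(x)$, lifts to $102$ candidates for $\chi_S(x)$, and only then applies the $2$-adic sieve to leave the three survivors. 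Without the shift-and-square to totally positive integers and the small-trace theory, you have no mechanism to make the enumeration finite.
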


		Let $\lambda_1,\dots,\lambda_{17}$ be the $17$ other eigenvalues of $S$, which satisfy the inequalities  $-5<\lambda_1\leqslant \dots \leqslant \lambda_{17}$ .
		Observe that $S$ has $\operatorname{tr} S = 0$ and $\operatorname{tr} S^2 = 50\cdot 49$.
		Since the smallest eigenvalues of $S$ is $-5$ with multiplicity $33$, we have

						\begin{align}
							\sum_{i=1}^{17} \lambda_i &= 33 \cdot 5 = 165 \label{eqn:tr} \\
							\sum_{i=1}^{17} \lambda_i^2 &= 50\cdot 49 - 33 \cdot 5^2 = 1625. \label{eqn:tr2}
						\end{align}
		\begin{remark}
			\label{rem:top3coeffs}
			Write $\chi_S(x) = \sum_{i=0}^{50} c_i x^{n-i}$.
			Since $\operatorname{tr} S = 0$ and $\operatorname{tr} S^2 = n(n-1)$, using Newton's identities, we have that $c_0 = 1$, $c_1 = 0$, and $c_2 = -50\cdot 49/2$.
		\end{remark}
		
		\subsection{Totally positive algebraic integers of small trace} 
		\label{sub:totally_positive_algebraic_integers_of_small_trace}

		A polynomial $p(x) \in \mathbb Z[x]$ (resp.\ algebraic integer) is called \textbf{totally positive} if all of its zeros (resp.\ conjugates) are positive.
		The \textbf{trace} of a polynomial $p(x) \in \mathbb Z[x]$ (resp.\ algebraic integer) is defined to be the sum of its zeros (resp.\ conjugates); the trace of $p(x)$ is denoted by $\operatorname{tr} p(x)$.
		In this section, we turn the problem of determining the unknown eigenvalues $\lambda_i$ (for $i \in \{1,\dots,17\}$) of $S$ into a problem about totally positive algebraic integers of small trace.
		
				Putting \eqref{eqn:tr} and \eqref{eqn:tr2} together, we have
				\[
					\sum_{i=1}^{17} (\lambda_i-10)^2 = 25.
				\]

				Define the polynomials $F(x)$ and $G(x)$ by $F(x) := \chi_S(x)/(x+5)^{33} = \prod_{i=1}^{17}(x-\lambda_i)$ and $G(x) := \prod_{i=1}^{17}(x-(\lambda_i-10)^2)$.
				By Corollary~\ref{cor:evenmod2}, none of the $\lambda_i$ can be an even (rational) integer.
				Hence $G(x)$ is a totally positive, monic polynomial in $\mathbb{Z}[x]$ with trace $25$.
				Also note that the difference between the trace and the degree of $G(x)$ is $8$.
				Therefore, each irreducible factor of $G(x)$ must also have its trace minus its degree at most $8$.
				
				Denote by $T(d,t)$ the set of irreducible, totally positive, monic, degree-$d$, integer polynomials of trace $t$.
				Each irreducible factor of $G(x)$ must belong to $T(d,t)$ for some $d$ and $t$ where $1 \leqslant d \leqslant 17$ and $d \leqslant t \leqslant d+8$.
		
				%
		
				\begin{lemma}\label{lem:qmod2}
					Suppose that $f(x) = \prod_{i=1}^{d}(x-\lambda_i) \equiv (x+1)^{d} \mod {2 \mathbb Z[x]}$.
					Let $\mu$ be an integer and $g(x) = \prod_{i=1}^{d}(x-(\lambda_i-2\mu)^2)$.
					Then $g(x) \equiv (x+1)^{d} \mod {2 \mathbb Z[x]}$.
				\end{lemma}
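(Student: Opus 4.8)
The plan is to reduce the statement to the special case $\mu = 0$, and then to exploit the classical identity that relates the monic polynomial whose roots are the squares $\lambda_i^2$ to the product $f(x)f(-x)$. For the reduction, note that $f(x + 2\mu)$ lies in $\mathbb Z[x]$, is monic of degree $d$, and has roots $\lambda_1 - 2\mu, \dots, \lambda_d - 2\mu$; since $2\mu \equiv 0 \mod{2\mathbb Z[x]}$, we have $f(x+2\mu) \equiv f(x) \equiv (x+1)^d \mod{2\mathbb Z[x]}$. Hence it suffices to prove the case $\mu = 0$ with $f$ replaced by $f(x+2\mu)$: that is, I will show that if $F(x) = \prod_{i=1}^d(x - \rho_i) \in \mathbb Z[x]$ satisfies $F(x) \equiv (x+1)^d \mod{2\mathbb Z[x]}$, then $\prod_{i=1}^d(x - \rho_i^2) \equiv (x+1)^d \mod{2\mathbb Z[x]}$; applying this with $\rho_i = \lambda_i - 2\mu$ then yields the lemma.

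Set $g_0(x) = \prod_{i=1}^d(x - \rho_i^2)$. The key identity is
\[
  g_0(x^2) = \prod_{i=1}^d (x^2 - \rho_i^2) = \prod_{i=1}^d (x - \rho_i)(x + \rho_i) = (-1)^d F(x)\, F(-x).
\]
The right-hand side is a polynomial in $x^2$ with integer coefficients, which in passing confirms that $g_0 \in \mathbb Z[x]$, so that the desired congruence is meaningful.

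Now reduce modulo $2$. In $\mathbb F_2[x]$ we have $F(x) \equiv (x+1)^d$ and, because $-1 \equiv 1 \mod 2$, also $F(-x) \equiv (-x+1)^d \equiv (x+1)^d$. Substituting into the identity gives
\[
  g_0(x^2) \equiv (x+1)^{2d} = \bigl((x+1)^2\bigr)^d \equiv (x^2+1)^d \mod{2\mathbb Z[x]}.
\]
Since the substitution $y \mapsto x^2$ is an injective ring homomorphism $\mathbb F_2[y] \to \mathbb F_2[x]$ and $(x^2+1)^d$ is the image of $(y+1)^d$, we conclude $g_0(y) \equiv (y+1)^d \mod{2\mathbb Z[y]}$. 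Together with the first-paragraph reduction this is exactly the assertion of the lemma.

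I do not expect a genuine obstacle here: the proof is essentially the single elementary identity $g_0(x^2) = (-1)^d F(x)F(-x)$ combined with the fact that reduction modulo $2$ collapses $F(-x)$ onto $F(x)$. The only points needing a word of care are that $g_0$ has integer coefficients (handled automatically by the identity) and the trivial observation that a polynomial in $x^2$ over $\mathbb F_2$ uniquely determines the polynomial of which it is the square-substitution; neither is serious.
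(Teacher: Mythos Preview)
Your proof is correct and follows essentially the same approach as the paper: both hinge on the identity $g(x^2) = (-1)^d f(x+2\mu)f(-x+2\mu)$ and the observation that modulo $2$ this becomes $(x+1)^{2d} \equiv (x^2+1)^d$, from which the conclusion is read off. The only cosmetic difference is that you first reduce to $\mu=0$ and then invoke the Frobenius identity $(x+1)^2 \equiv x^2+1$ explicitly, whereas the paper carries the shift $2\mu$ through and phrases the last step via the vanishing of the odd binomial coefficients $\binom{2d}{2i+1}$ modulo $2$.
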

				\begin{proof}
					We can write $g(x^2) = (-1)^d f(x+2\mu)f(-x+2\mu) \equiv (x+1)^{2d} \mod {2\mathbb Z[x]}$.
					Since $(x+1)^{2d} \equiv \sum_{i=0}^{d} \binom{2d}{2i}x^{2i} \mod {2\mathbb Z[x]}$, we see that $g(x) \equiv (x+1)^{d} \mod 2 \mathbb Z[x]$, as required.
				\end{proof}
		
		By Corollary~\ref{cor:evenmod2}, the polynomial $F(x)$ is congruent to $(x+1)^{d}$ modulo ${2 \mathbb Z[x]}$.
			Hence, by Lemma~\ref{lem:qmod2}, the polynomial $G(x)$ is also congruent to $(x+1)^d$ modulo $2\mathbb Z[x]$, and since $\mathbb Z[x]/2\mathbb Z[x]$ is a UFD, each irreducible factor of $G(x)$ of degree $e$ is congruent to $(x+1)^e$ modulo $2\mathbb Z[x]$.
				Next we consider all possible candidates for the irreducible factors of $G(x)$.
		
		Define the set $\mathcal T(d,t) := \{ p(x) \in T(d,t) \; | \; p(x) \equiv (x+1)^d \mod{2\mathbb Z[x]}  \}$.
		We distinguish between two kinds of irreducible factors $\eta(x)$ of $G(x)$ depending on whether or not $\eta(x^2)$ is irreducible.
		Define the sets
		\begin{align*}
			\mathcal T_{\text{irr}}(d,k) &:= \{ p(x) \in \mathcal T(d,k+d) \; | \; p(x^2) \text{ is irreducible}  \} \\
			\mathcal T_{\text{red}}(d,k) &:= \{ p(x) \in \mathcal T(d,k+d) \; | \; p(x^2) \text{ is reducible}  \}.
		\end{align*}
		
		\begin{lemma}\label{lem:irrsquare}
			Suppose that $g(x) \in \mathcal T_{\text{irr}}(d,k)$ is a factor of $G(x)$.
			Then $g(x)^2$ divides $G(x)$.
		\end{lemma}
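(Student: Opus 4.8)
The plan is to exploit the hypothesis that $g(x^2)$ is irreducible, which forces the zeros of $g$ to arise, after the substitution $x \mapsto x-10$, from a single Galois orbit on which the squaring map is exactly two-to-one; the rest is bookkeeping of multiplicities.

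First I would unwind what it means for $g$ to divide $G(x) = \prod_{i=1}^{17}(x-(\lambda_i-10)^2)$: there is an index $i_0$ with $(\lambda_{i_0}-10)^2$ a zero of $g$. Put $\theta := \lambda_{i_0}-10$, so $\theta$ is a zero of $g(x^2)$; since $g(x^2)$ is irreducible of degree $2d$, it is the minimal polynomial of $\theta$. Write the zeros of $g$ as $\theta_1^2, \dots, \theta_d^2$; these are $d$ distinct nonzero numbers because $g$, being irreducible, is squarefree, and $g$, being totally positive, has no zero at $0$. Then the $2d$ conjugates of $\theta$ are exactly $\pm\theta_1, \dots, \pm\theta_d$, and these $2d$ numbers are pairwise distinct (if $\theta_j = -\theta_\ell$ then $\theta_j^2 = \theta_\ell^2$, so $j=\ell$ and $\theta_j = 0$, a contradiction). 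Consequently $\lambda_{i_0} = \theta + 10$ has minimal polynomial $g((x-10)^2)$, of degree $2d$, with zeros $10\pm\theta_1, \dots, 10\pm\theta_d$.

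Next I would pass to $F$. Since $F(x) = \chi_S(x)/(x+5)^{33}$ is monic in $\mathbb Z[x]$ and has $\lambda_{i_0}$ as a zero, the minimal polynomial $g((x-10)^2)$ divides $F(x)$, say with multiplicity $m \geqslant 1$. Because $F \in \mathbb Z[x]$ and $g((x-10)^2)$ is irreducible, every zero of $g((x-10)^2)$ --- that is, each of $10+\theta_j$ and each of $10-\theta_j$ --- appears among $\lambda_1, \dots, \lambda_{17}$ with multiplicity exactly $m$. I would deliberately carry this common multiplicity $m$ along rather than assume $F$ is squarefree; that is the one point where a little care is warranted, and it is really the only obstacle to watch for.

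Finally, fix $j \in \{1,\dots,d\}$. Among $\lambda_1, \dots, \lambda_{17}$ there are $m$ indices with $\lambda_i = 10+\theta_j$ and a further, disjoint (since $\theta_j \ne 0$) set of $m$ indices with $\lambda_i = 10-\theta_j$, and for every one of these $2m$ indices we have $(\lambda_i-10)^2 = \theta_j^2$. Hence $\theta_j^2$ is a zero of $G$ of multiplicity at least $2m \geqslant 2$. Since $\theta_1^2, \dots, \theta_d^2$ are precisely the distinct zeros of $g$, this shows that $g(x)^2 = \prod_{j=1}^{d}(x-\theta_j^2)^2$ divides $G(x)$, as required. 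All the structural content sits in the irreducibility of $g(x^2)$ together with the total positivity of $g$; everything else is immediate.
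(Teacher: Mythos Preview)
Your proof is correct. The paper's argument is more compact: it uses the identity $G(x^2) = -F(10+x)\,F(10-x)$, notes that the irreducible polynomial $g(x^2)$ must divide one of the two factors on the right, and then, since $g(x^2)$ is invariant under $x\mapsto -x$, that it must divide the other as well; hence $g(x^2)^2 \mid G(x^2)$ and so $g(x)^2 \mid G(x)$. Your version unpacks the same mechanism at the level of roots and multiplicities, tracking the minimal polynomial of $\lambda_{i_0}$ inside $F$. This is longer, but it makes two points explicit that the paper leaves implicit: that total positivity of $g$ guarantees $\theta_j \ne 0$ (so $10+\theta_j$ and $10-\theta_j$ are genuinely distinct), and that the conclusion survives even if $F$ has repeated irreducible factors. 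Both arguments rest on the single key fact that irreducibility of $g(x^2)$ places all of $\pm\theta_1,\dots,\pm\theta_d$ in one Galois orbit.
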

		\begin{proof}
			We have $G(x^2) = (-1)^{17} F(10+x)F(10-x)$.
			Therefore $(-1)^{17} F(10+x)F(10-x) = g(x^2)h(x^2)$, for some monic polynomial $h(x) \in \mathbb Z[x]$.
			Since $g(x^2)$ is irreducible, it must divide $F(10+x)$ or $F(10-x)$. 
			Furthermore, since $g(x^2)$ is invariant under substituting $-x$ for $x$, it must divide both $F(10+x)$ and $F(10-x)$.
			Hence $g(x^2)$ is also a factor of $h(x^2)$, as required.
		\end{proof}
		
		We will use the following consequence of Lemma~\ref{lem:irrsquare}.
		
		\begin{corollary}\label{cor:trBndFactors1}
			Let $g(x)$ be an irreducible factor of $G(x)$ of degree $d$.
			Then either $g(x) \in \mathcal T_{\text{irr}}(d,k)$ with $k \leqslant 4$ or $g(x) \in \mathcal T_{\text{red}}(d,k)$ with $k \leqslant 8$.
		\end{corollary}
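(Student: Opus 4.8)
The plan is to leverage the single arithmetic constraint satisfied by $G(x)$ — that it is monic, totally positive, of degree $17$ and trace $25$, so that $\operatorname{tr} G - \deg G = 8$ — together with the multiplicity statement in Lemma~\ref{lem:irrsquare}.

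First I would record the elementary fact that any monic totally positive $p(x) \in \mathbb{Z}[x]$ of degree $d$ has trace at least $d$: its constant term is, up to sign, the product of its positive roots, hence a positive integer, so AM--GM forces the mean of the roots to be at least $1$. Since $\operatorname{tr}$ and $\deg$ are both additive under multiplication, writing $G(x)$ as a product of irreducible factors $g_1,\dots,g_m$ counted with multiplicity and setting $k_j := \operatorname{tr} g_j - \deg g_j$, I obtain $k_j \geqslant 0$ for every $j$ and $\sum_{j=1}^{m} k_j = \operatorname{tr} G - \deg G = 8$. I would also recall from the discussion preceding the corollary that every irreducible factor of $G(x)$ of degree $e$ is congruent to $(x+1)^e$ modulo $2\mathbb{Z}[x]$, so each such factor lies in some $\mathcal{T}(d,t)$ and therefore belongs to exactly one of $\mathcal{T}_{\text{irr}}(d,k)$ or $\mathcal{T}_{\text{red}}(d,k)$, according to whether $g(x^2)$ is irreducible.

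Now I would take an irreducible factor $g(x)$ of $G(x)$ of degree $d$ and put $k = \operatorname{tr} g - d$. If $g(x) \in \mathcal{T}_{\text{red}}(d,k)$, then $k \leqslant 8$ is immediate, since $k$ is one of the nonnegative summands $k_j$ adding up to $8$. If instead $g(x) \in \mathcal{T}_{\text{irr}}(d,k)$, Lemma~\ref{lem:irrsquare} gives $g(x)^2 \mid G(x)$, so $g$ occurs among the $g_j$ with multiplicity at least $2$ and thus contributes at least $2k$ to the total $8$, forcing $2k \leqslant 8$, i.e.\ $k \leqslant 4$. This establishes the stated dichotomy.

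I do not expect a genuine obstacle here; the one place to be careful is the bookkeeping of multiplicities — making sure that ``$g^2 \mid G$'' really does double $g$'s contribution to $\operatorname{tr} G - \deg G$, which it does because that difference is precisely $\sum_j (\operatorname{tr} g_j - \deg g_j)$ taken over the full multiset of irreducible factors of $G$. Everything else is routine.
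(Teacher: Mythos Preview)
Your argument is correct and is exactly the reasoning the paper intends: it has already noted that $\operatorname{tr} G - \deg G = 8$ forces every irreducible factor to have $k \leqslant 8$, and the corollary is stated (without proof) as the immediate consequence of Lemma~\ref{lem:irrsquare}, namely that a factor in $\mathcal{T}_{\text{irr}}$ appears with multiplicity at least two and hence contributes at least $2k$ to that total. You have merely spelled out what the paper leaves implicit, including the AM--GM justification that $k_j \geqslant 0$.
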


		The next theorem is a result of McKee~\cite{McKee10}.

		\begin{theorem}\label{thm:trBnd}
			Let $f(x) \in \mathbb Z[x]$ be a monic totally-positive irreducible polynomial of degree $d \geqslant 5$.
			Then $\operatorname{tr} f(x) \geqslant \lceil 1.78839 d \rceil$.
		\end{theorem}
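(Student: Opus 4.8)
The approach I would take is Smyth's \emph{resultant method} (also called the auxiliary-function method). Write $f(x)=\prod_{i=1}^{d}(x-\alpha_i)$, so that, as $f$ is monic, totally positive and irreducible, the $\alpha_i$ are positive reals and $\operatorname{tr}f(x)=\sum_{i=1}^{d}\alpha_i$. The goal is to exhibit a real constant $\kappa\geqslant 1.78839$, a finite list of auxiliary polynomials $q_1,\dots,q_m\in\mathbb Z[x]$, and nonnegative reals $c_1,\dots,c_m$ such that the function
\[
  \Phi(x):=x-\sum_{k=1}^{m}c_k\log|q_k(x)|
\]
satisfies $\Phi(x)\geqslant\kappa$ for all $x>0$.

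Granting such data, I would finish as follows. Summing $\Phi(\alpha_i)\geqslant\kappa$ over $i$ gives
\[
  \operatorname{tr}f(x)-\sum_{k=1}^{m}c_k\log\Bigl|\prod_{i=1}^{d}q_k(\alpha_i)\Bigr|=\sum_{i=1}^{d}\Phi(\alpha_i)\geqslant\kappa d .
\]
Each product $\prod_{i=1}^{d}q_k(\alpha_i)=\pm\operatorname{Res}(f,q_k)$ is a rational integer (it is a symmetric function of the conjugates of $\alpha_1$ with integer coefficients), and it is nonzero exactly when $\gcd(f,q_k)=1$; when nonzero its absolute value is at least $1$, so its logarithm is nonnegative. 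Since every $c_k\geqslant 0$, we conclude $\operatorname{tr}f(x)\geqslant\kappa d\geqslant 1.78839\,d$, and because $\operatorname{tr}f(x)$ is an integer, $\operatorname{tr}f(x)\geqslant\lceil 1.78839\,d\rceil$. This argument applies whenever $f$ is not an irreducible factor of any $q_k$; since the list $q_1,\dots,q_m$ is finite there are only finitely many exceptional $f$, and for each one the trace is checked directly (every exception of degree $\geqslant 5$ satisfies the bound; the genuine failures, such as $x^2-3x+1$, all have degree at most $4$, which is why the hypothesis is $d\geqslant 5$).

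The substantive work is the construction of $\kappa$ and the auxiliary data, and this is computational. I would fix a large but finite pool of candidate polynomials $q_k$ --- integer polynomials of bounded degree and height, together with the minimal polynomials of the known totally positive algebraic integers of small trace --- and solve the semi-infinite linear program ``maximise $\kappa$ subject to $c_k\geqslant 0$ and $x-\sum_k c_k\log|q_k(x)|\geqslant\kappa$ for all $x>0$''. In practice one discretises the constraint in $x$, solves the resulting finite linear program numerically to obtain candidate values of $\kappa$ and the $c_k$, and then \emph{rigorously verifies} the pointwise inequality on the whole half-line: since $\Phi(x)-\kappa\to+\infty$ both as $x\to\infty$ (the logarithmic term is $o(x)$) and as $x$ tends to any zero of a $q_k$, the infimum of $\Phi-\kappa$ is attained at one of finitely many interior critical points, and interval arithmetic applied to $\Phi$ and $\Phi'$ reduces the verification to finitely many certified sign checks. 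The constant $1.78839$ is exactly what this optimisation yields for McKee's choice of pool. The remaining small degrees --- those $d$ for which the inequality above does not directly apply, together with any degrees not reached by the auxiliary polynomials --- are handled by exhaustively enumerating all totally positive irreducible polynomials of that degree and trace less than $\lceil 1.78839\,d\rceil$ (the coefficients are bounded using the Newton inequalities for power sums of positive reals) and verifying that there are none.

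The main obstacle is this computational step, and it is also where McKee's improvement over the classical estimates lies: with Smyth's original modest pool of auxiliary polynomials the method only delivers $\kappa\approx 1.7836$, and getting up to $1.78839$ requires an astute enlargement of the pool, guided by which resultants $\operatorname{Res}(f,q_k)$ are small for near-extremal $f$ --- equivalently, by the finitely many positive reals at which $\Phi$ is tight --- so that the new $q_k$ vanish near precisely those troublesome points. A secondary obstacle is one of rigour rather than ideas: certifying $\Phi(x)\geqslant\kappa$ on all of $(0,\infty)$ with verified arithmetic, particularly near the critical points of $\Phi$ and near the zeros of the $q_k$, and carrying out the complementary finite enumeration cleanly for the small degrees.
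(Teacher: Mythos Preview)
The paper does not prove this statement at all: it is quoted as a result of McKee~\cite{McKee10} and used as a black box. Your proposal correctly identifies and outlines the method McKee actually uses---Smyth's auxiliary-function/resultant method, with a computer-optimised pool of auxiliary polynomials $q_k$ and a rigorous verification that $\Phi(x)\geqslant\kappa$ on $(0,\infty)$, followed by a finite check of the exceptional $f$ dividing some $q_k$---so your sketch is faithful to the cited source even though it goes well beyond what the present paper does.
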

		
		It follows that we can further improve the bounds on the degree of the putative irreducible factors of $G(x)$.

		\begin{corollary}\label{cor:trBndFactors2}
			Let $g(x)$ be an irreducible factor of $G(x)$ of degree $d$.
			Suppose that $g(x) \in \mathcal T_{\text{irr}}(d,k)$ (resp. $g(x) \in \mathcal T_{\text{red}}(d,k)$) for some $d$ and $k$.
			Then $d \leqslant 5$ (resp. $d \leqslant 10$).
		\end{corollary}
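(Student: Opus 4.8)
The plan is to deduce this directly from Corollary~\ref{cor:trBndFactors1} together with McKee's lower bound on the trace of a totally positive irreducible polynomial (Theorem~\ref{thm:trBnd}), finishing with an elementary numerical comparison.

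First I would unpack the definitions: if $g(x) \in \mathcal T_{\text{irr}}(d,k)$ then by definition $g(x) \in \mathcal T(d,k+d) \subseteq T(d,k+d)$, so $g(x)$ is a monic, totally positive, irreducible, degree-$d$ polynomial with $\operatorname{tr} g(x) = d+k$; the same holds in the reducible case. By Corollary~\ref{cor:trBndFactors1} we have $k \leqslant 4$ in the irreducible case and $k \leqslant 8$ in the reducible case, hence $\operatorname{tr} g(x) \leqslant d+4$, respectively $\operatorname{tr} g(x) \leqslant d+8$.

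Next I would split on the size of $d$. If $d \leqslant 5$ (resp. $d \leqslant 10$) there is nothing to prove. Otherwise $d \geqslant 6 \geqslant 5$ (resp. $d \geqslant 11 \geqslant 5$), so Theorem~\ref{thm:trBnd} applies and gives $\operatorname{tr} g(x) \geqslant \lceil 1.78839\, d \rceil$. Combining the two bounds yields $\lceil 1.78839\, d \rceil \leqslant d+4$ in the irreducible case, which already fails at $d=6$ since $\lceil 1.78839 \cdot 6 \rceil = 11 > 10$; and since $\lceil 1.78839\, d \rceil - d \geqslant 0.78839\, d$ is strictly increasing in $d$, it fails for every $d \geqslant 6$. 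Thus $d \leqslant 5$. Identically, in the reducible case $\lceil 1.78839\, d \rceil \leqslant d+8$ fails at $d=11$ (where $\lceil 1.78839 \cdot 11 \rceil = 20 > 19$) and hence for all $d \geqslant 11$, so $d \leqslant 10$.

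There is no real obstacle here: the substance is entirely contained in Corollary~\ref{cor:trBndFactors1} and Theorem~\ref{thm:trBnd}, and what remains is a one-line deduction. The only point requiring a sentence of care is the monotonicity remark that lets one pass from "the inequality fails at the boundary value" to "it fails for all larger $d$"; this is immediate from $1.78839\, d - d = 0.78839\, d \to \infty$, and in any case the two boundary inequalities can simply be checked by hand.
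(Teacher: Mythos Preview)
Your proof is correct and is exactly the intended argument: the corollary is a direct numerical consequence of Corollary~\ref{cor:trBndFactors1} together with Theorem~\ref{thm:trBnd}, and the paper itself merely says ``It follows that\ldots'' without spelling out the comparison. Your boundary checks at $d=6$ and $d=11$ and the monotonicity remark are all that is needed.
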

		
		By Corollary~\ref{cor:trBndFactors1} and Corollary~\ref{cor:trBndFactors2}, each irreducible factor of $G(x)$ belongs to either $\mathcal T_{\text{irr}}(d,k)$ with $d \leqslant 5$ and $k \leqslant 4$ or $\mathcal T_{\text{red}}(d,k)$ with $d \leqslant 10$ and $k \leqslant 8$.
		
		
		\subsection{Polynomial enumeration algorithm} 
		\label{sub:polynomial_enumeration_algorithm}

		We use a method due to Robinson to find all possible irreducible factors of $G(x)$.
		This method has been detailed by Smyth~\cite{Smyth84} and McKee and Smyth~\cite{MS04}.
		For completeness, following McKee and Smyth~\cite[Section 3.2]{MS04}, we describe the algorithm below.
		We will also use this algorithm in Proposition~\ref{prop:kill_sub}.
		
		First we state a result about the interlacing of the zeros of a totally real polynomial and its derivative.
		This result is a straightforward consequence of Rolle's theorem.
		
		\begin{proposition}\label{pro:derivative}
			Let $d \geqslant 2$ and let $p(x)$ be a monic degree-$d$ polynomial having zeros $\alpha_1 < \dots < \alpha_{d}$.
			Denote by $\beta_1 < \dots < \beta_{d-1}$ the zeros of its derivative $p^\prime(x)$.
			Then
			\[
				\alpha_1 < \beta_1 < \alpha_2 < \dots < \alpha_{d-1} < \beta_{d-1} < \alpha_d.
			\]
		\end{proposition}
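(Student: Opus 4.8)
The plan is to apply Rolle's theorem on each of the $d-1$ intervals cut out by consecutive zeros of $p(x)$, and then use a degree count to confirm that every zero of $p^\prime(x)$ is produced this way.

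First I would record the setup: since $p(x)$ is monic of degree $d$ with $d$ distinct real zeros $\alpha_1 < \dots < \alpha_d$, it factors as $p(x) = \prod_{i=1}^d (x-\alpha_i)$, so $p^\prime(x)$ is a polynomial of degree $d-1$ and therefore has at most $d-1$ zeros counted with multiplicity. Next, for each $i \in \{1,\dots,d-1\}$ apply Rolle's theorem to $p$ on the closed interval $[\alpha_i,\alpha_{i+1}]$: since $p$ is (infinitely) differentiable and $p(\alpha_i) = p(\alpha_{i+1}) = 0$, there is a point $\gamma_i \in (\alpha_i,\alpha_{i+1})$ with $p^\prime(\gamma_i) = 0$. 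The open intervals $(\alpha_i,\alpha_{i+1})$, $i = 1,\dots,d-1$, are pairwise disjoint, so $\gamma_1 < \gamma_2 < \dots < \gamma_{d-1}$ are $d-1$ distinct zeros of $p^\prime(x)$.

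Combining the two observations, $p^\prime(x)$ has exactly $d-1$ zeros, all simple, namely $\gamma_1 < \dots < \gamma_{d-1}$ with $\gamma_i \in (\alpha_i,\alpha_{i+1})$; in particular $p^\prime(x)$ has no further zeros, and none outside $(\alpha_1,\alpha_d)$. Since $\beta_1 < \dots < \beta_{d-1}$ is by definition the increasing enumeration of the zeros of $p^\prime(x)$, we get $\beta_i = \gamma_i$ for every $i$, and hence the chain $\alpha_1 < \beta_1 < \alpha_2 < \dots < \alpha_{d-1} < \beta_{d-1} < \alpha_d$ asserted in Proposition~\ref{pro:derivative}. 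There is no genuine obstacle here; the only point meriting (minor) care is the degree count that forces Rolle's theorem to account for \emph{all} of the zeros of $p^\prime(x)$, which is immediate from $\deg p^\prime = d-1$.
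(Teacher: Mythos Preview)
Your argument is correct and is exactly the approach the paper indicates: the paper does not spell out a proof but simply notes that the proposition is a straightforward consequence of Rolle's theorem, which is precisely what you carry out, including the degree count that pins down all of the zeros of $p^\prime(x)$.
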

		
%
		Fix an integer $t$ and real numbers $l$ and $u$.
		Let $\mathfrak P \subset \mathbb Z[x]$ be the set of monic
		 polynomials
		having trace $t$ and all zeros inside the interval $[l,u]$.
		We would like to find all polynomials in $\mathfrak P$.
		Suppose $p(x) \in \mathfrak P$.
		Then
		\[
			p(x) = x^d - t x^{d-1} + a_2 x^{d-2} + \dots + a_{d-1}x +a_{d},
		\]
		for some integers $a_2, \dots, a_d$.
		For $r = d, d-1, \dots, 1$, define
		\[
			p_r(x) = \frac{r!}{d!}\frac{\mathrm d^{d-r}}{\mathrm d x^{d-r}} p(x) = x^r - t\frac{r}{d}x^{r-1} + \dots +  a_r\frac{r!(d-r)!}{d!}.
		\]
		Let $\gamma^{(r)}_1 \leqslant \gamma^{(r)}_2 \leqslant \dots \leqslant \gamma^{(r)}_r$ be the zeros of $p_r(x)$.
		Then, by Proposition~\ref{pro:derivative}, we have that $l < \gamma_1^{(r)}$ and $\gamma_r^{(r)} < u$.
		Given a candidate for $p_i(x)$, having $i$ distinct zeros, that is, given values for $a_2,\dots,a_i$, we seek the (possibly empty) range of values for $a_{i+1}$ such that $p_{i+1}(x)$ has $i+1$ distinct zeros in the interval $[l,u]$.
		To find this range of values for $a_{i+1}$ we need to find the middle (with a left bias for discriminant of odd degree) two zeros of the discriminant of $p_{i+1}(x)$, which is a polynomial in $a_{i+1}$.
		We refer to \cite[Section 3.2]{MS04} for the details.
		
		This gives us a tree with root (or first generation) $a_1 = -(d+k)$.
		The $r$th generation consisting of nodes $v$ such that each path $a_1 = v_1, v_2, \dots, v_r = v$ from the root $a_1$ to $v$ corresponds to the polynomial $p_r$, where $a_i = v_i$ for all $1\leqslant i \leqslant r$. 
		Each path of length $d$ in the tree corresponds to a candidate for $p(x)$.
		This way we obtain all elements of $\mathfrak P$.
		
		Now we remark on the applications of this algorithm.
		
		When finding polynomials in $\mathcal T(d,t)$, we can speed up the algorithm by taking advantage of the fact that each polynomial in $\mathcal T(d,t)$ is congruent to $(x+1)^d$ modulo $2\mathbb Z[x]$.
		That is, for each $i \in \{2,\dots,d\}$, we restrict each $a_{i}$ so that $a_{i} \equiv \binom{d}{i} \mod 2$.
	
		When applying the algorithm to the proofs of Theorems~\ref{thm:first_pol},~\ref{thm:second_pol},~\ref{thm:third_pol}, and Proposition~\ref{prop:kill_sub}, we not only fix the trace, we also fix the coefficient $a_2$.
		
		
		\subsection{Candidates for irreducible factors of $G(x)$} 
		\label{sub:candidates_for_irreducible_factors_of_g_x}
		
		Now we detail the results of the computation of all polynomials in $\mathcal T_{\text{irr}}(d,k)$ for $d \in \{1,\dots,5\}$ and $k \in \{0,2,4\}$ and all polynomials in $\mathcal T_{\text{red}}(d,k)$ for $d \in \{1,\dots,10\}$ and $k \in \{0,2,4,6,8\}$.
		We list all the polynomials in these sets in Tables~\ref{tab:IRR}~and~\ref{tab:RED}.
		
		\begin{table}[htbp]
			\begin{center}
			\begin{tabular}{c|c}
				$\mathcal T_{\text{irr}}(1,2)$ & $x-3$ \\
				 \hline
 				$\mathcal T_{\text{irr}}(1,4)$ & $x-5$ \\
 				 \hline
  				$\mathcal T_{\text{irr}}(2,2)$ & $x^2-4x+1$ \\
  				 \hline
   				$\mathcal T_{\text{irr}}(2,4)$ & $x^2-6x+3$ \\
				& $x^2-6x+7$ \\
   				 \hline
    			$\mathcal T_{\text{irr}}(3,4)$ & $x^3-7x^2+9x-1$ \\
				 & $x^3-7x^2+11x-3$ \\
				 & $x^3-7x^2+13x-5$ \\
    			 \hline
     			$\mathcal T_{\text{irr}}(4,4)$ & $x^4-8x^3+16x^2-8x+1$ \\
 				 & $x^4-8x^3+18x^2-10x+1$ \\
 				 & $x^4-8x^3+20x^2-16x+1$  
			\end{tabular}
			\end{center}
			\caption{The elements of $\mathcal T_{\text{irr}}(d,k)$ for $d \in \{1,\dots,5\}$ and $k \in \{0,2,4\}$}
			\label{tab:IRR}
		\end{table}
		
		
		\begin{table}[htbp]
			\begin{center}
			\begin{tabular}{c|c}
				$\mathcal T_{\text{red}}(1,0)$ & $x-1$ \\
				 \hline
 				$\mathcal T_{\text{red}}(1,8)$ & $x-9$ \\
 				 \hline
  				$\mathcal T_{\text{red}}(2,4)$ & $x^2-6x+1$ \\
  				 \hline
   				$\mathcal T_{\text{red}}(3,4)$ & $x^3-7x^2+11x-1$ \\
   				 \hline
    			$\mathcal T_{\text{red}}(3,8)$ & $x^3-11x^2+7x-1$ \\
				 & $x^3-11x^2+23x-1$ \\
				 & $x^3-11x^2+27x-1$ \\
				 & $x^3-11x^2+31x-25$ \\
				 & $x^3-11x^2+31x-9$ \\
    			 \hline
     			$\mathcal T_{\text{red}}(4,8)$ & $x^4-12x^3+26x^2-12x+1$ \\
 				 & $x^4-12x^3+34x^2-20x+1$ \\
 				 & $x^4-12x^3+38x^2-40x+9$ \\
 				 & $x^4-12x^3+38x^2-16x+1$ \\
 				 & $x^4-12x^3+42x^2-44x+1$ \\
				 & $x^4-12x^3+46x^2-64x+25$ \\
				 & $x^4-12x^3+46x^2-56x+1$ \\
     			 \hline
      			$\mathcal T_{\text{red}}(5,8)$ & $x^5-13x^4+42x^3-46x^2+13x-1$ \\
  				 & $x^5-13x^4+46x^3-42x^2+13x-1$ \\
  				 & $x^5-13x^4+50x^3-66x^2+17x-1$ \\
  				 & $x^5-13x^4+50x^3-62x^2+21x-1$ \\
  				 & $x^5-13x^4+54x^3-90x^2+53x-1$ \\
 				 & $x^5-13x^4+54x^3-86x^2+49x-9$ \\
 				 & $x^5-13x^4+54x^3-78x^2+33x-1$ \\
  				 & $x^5-13x^4+54x^3-74x^2+21x-1$ \\
 				 & $x^5-13x^4+58x^3-106x^2+73x-9$ \\
 				 & $x^5-13x^4+58x^3-102x^2+61x-9$ \\
				 & $x^5-13x^4+58x^3-98x^2+41x-1$ \\
      			 \hline
       			$\mathcal T_{\text{red}}(6,8)$ & $x^6-14x^5+59x^4-96x^3+59x^2-14x+1$ \\
   				 & $x^6-14x^5+63x^4-104x^3+63x^2-14x+1$ \\
   				 & $x^6-14x^5+67x^4-136x^3+111x^2-26x+1$ \\
   				 & $x^6-14x^5+67x^4-132x^3+99x^2-26x+1$ \\
   				 & $x^6-14x^5+67x^4-132x^3+103x^2-22x+1$ \\
  				 & $x^6-14x^5+71x^4-160x^3+151x^2-38x+1$ \\
  				 & $x^6-14x^5+71x^4-160x^3+155x^2-50x+1$ \\
   				 & $x^6-14x^5+71x^4-156x^3+135x^2-26x+1$ \\
				 \hline
				 $\mathcal T_{\text{red}}(7,8)$ & $x^7 - 15x^6 + 81x^5 - 203x^4 + 243x^3 - 125x^2 + 23x - 1$ \\
				 & $x^7 - 15x^6 + 81x^5 - 195x^4 + 215x^3 - 101x^2 + 19x - 1$ \\
				 & $x^7 - 15x^6 + 85x^5 - 227x^4 + 287x^3 - 149x^2 + 23x - 1$ \\
				 & $x^7 - 15x^6 + 85x^5 - 227x^4 + 291x^3 - 165x^2 + 35x - 1$
			\end{tabular}
			\end{center}
			\caption{The elements of $\mathcal T_{\text{red}}(d,k)$ for $d \in \{1,\dots,10\}$ and $k \in \{0,2,4,6,8\}$}
			\label{tab:RED}
		\end{table}
		
		Note that we need only to consider even values of $k$.
		Indeed, by Lemma~\ref{lem:qmod2}, each degree-$d$, irreducible factor $g(x)$ of $G(x)$ must be congruent to $(x+1)^d \mod 2\mathbb Z[x]$.
		Hence the trace of $g(x)$ is congruent to $d$ modulo $2$.
		Furthermore, by Theorem~\ref{thm:trBnd}, for $t < \lceil 1.78839 d \rceil$ we know that $\mathcal T(d,t)$ is empty.
		
		To simplify our task, we take advantage of related computations that have already been performed.
		
		\paragraph{\emph{Trace minus degree at most $6$.}} 
		\label{par:trace_minus_degree_at_most_6}
		
		Smyth~\cite{Smyth84} has enumerated the elements of the sets $T(d,t)$ for all integers $d \geqslant 1$ and $t \geqslant d$ such that $t-d \leqslant 6$.
		We can use the lists of Smyth to find the sets $\mathcal T_{\text{irr}}(d,k)$ and $\mathcal T_{\text{red}}(d,k)$ for $k \leqslant 6$.
		
		
		\paragraph{\emph{Degree $10$.}} 
		\label{par:degree_10}
		
		For degree-$10$ candidates, using the above restrictions, we need only compute the set $\mathcal T_{\text{red}}(10,8)$.
		The set $T(10,18)$ was computed in \cite{MS04}.
		It consists of three polynomials, from which it is straightforward to check that the set $\mathcal T(10,18)$ is empty.
		
		
		\paragraph{\emph{Degree $9$.}} 
		\label{par:degree_9}
		
		For degree-$9$ candidates, using the above restrictions, we need only compute the set $\mathcal T_{\text{red}}(9,8)$.
		It was shown in \cite{AP08} that the cardinality of the set $T(9,17)$ is $686$.
		We recomputed this list of $686$ polynomials and found that just two of these polynomials $p_1(x)$ and $p_2(x)$ (say) are congruent to $(x+1)^9 \mod 2\mathbb Z[x]$.
		However, both $p_1(x^2)$ and $p_2(x^2)$ are irreducible.
		Hence, $\mathcal T_{\text{red}}(9,8)$ is empty.
		
		\paragraph{\emph{Trace minus degree $8$.}} 
		\label{par:degree_87}
		
		It remains for us to compute the sets $\mathcal T_{\text{red}}(d,8)$ for $d \in \{1,\dots,8\}$.
		We used the algorithm described in Section~\ref{sub:polynomial_enumeration_algorithm}.
		The computations ran in SageMath~\cite{sage} on a single core of an Intel Core i7 at 2.9 GHz.
		Computing the sets $\mathcal T_{\text{red}}(d,8)$ for $d \in \{1,\dots,8\}$ took 8 hours and 24 minutes in total.
		We have listed the elements of these sets in Table~\ref{tab:RED}.
		We found that, although the set $\mathcal T(8,16)$ has cardinality $48$, the set $\mathcal T_{\text{red}}(8,8)$ is empty.
		
		
		

		

		\subsection{From $G(x)$ to $F(x)$} 
		\label{sub:from_g_x_to_f_x}
		
		Now we have computed all possible candidates (see Tables~\ref{tab:IRR}~and~\ref{tab:RED}) for the irreducible factors of $G(x)$, we can find all candidates for the polynomial $G(x)$.
		By Lemma~\ref{lem:irrsquare}, each factor $g(x) \in \mathcal T_{\text{irr}}(d,k)$ of $G(x)$ for some $d$ and $k$ must be a factor with even multiplicity.
		Hence we form the set
		\[
			\mathcal U:= \bigcup_{d=1}^{5} \bigcup_{k=1}^{4} \{ g(x)^2 \;|\; g(x) \in \mathcal T_{\text{irr}}(d,k) \} \cup \bigcup_{d=1}^{10} \bigcup_{k=1}^{8} \mathcal T_{\text{red}}(d,k).
		\]
		Then define $\overline{\mathcal U} := \bigcup_{i=1}^{17} \mathcal U^i$ to be a union of cartesian powers of $\mathcal U$.
		For a vector of $l$ polynomials, $\mathbf{v} = (v_1(x), v_2(x),\dots,v_{l}(x))$, define $p_{\mathbf{v}}(x) := \prod_{i=1}^{l} v_i(x)$.
		Now form the set
		\[
			\mathcal G = \left \{ p_\mathbf{v}(x)  \; | \; \mathbf{v} \in \overline{\mathcal U},\; \deg p_\mathbf{v}(x) = 17, \text{ and } \operatorname{tr} p_\mathbf{v}(x) = 25 \right  \},
		\]
		which contains all candidates for the polynomial $G(x)$.
		The cardinality of $\mathcal G$ is $55$.
		
		Now we can construct all possible candidates for the polynomial $F(x)$.
		Each polynomial $g(x) \in \mathcal T_{\text{irr}}(d,k)$ corresponds to the polynomial $f(x) = g((x-10)^2)$, which is a potential factor of $F(x)$.
		Each polynomial $g(x) \in \mathcal T_{\text{red}}(d,k)$ corresponds to the polynomial $g((x-10)^2) = f_1(x)f_2(x)$, where both $f_1(x)$ and $f_2(x)$ are potential factors of $F(x)$.
		Therefore, for each $g(x) \in \mathcal T_{\text{irr}}(d,k)$, we define the set $\mathcal H(g(x)) := \{ g((x-10)^2)^{1/2} \}$ and, for each $g(x) \in \mathcal T_{\text{red}}(d,k)$, we define the set $\mathcal H(g(x)) := \{ f(x) \; | \; f(x) \text{ is a factor of } g((x-10)^2) \}$.
		
		For a polynomial $G(x) \in \mathcal G$, denote by $\operatorname{irr}(G(x))$ the multiset of irreducible factors of $G(x)$ and define the set $$P(G(x)) := \prod_{\nu(x) \in \operatorname{irr}(G(x))} \mathcal H(\nu(x)).$$
		Now define the set of polynomials
		\[
			\mathcal R(G(x)) := \left \{ p_\mathbf{v}(x) \mid \mathbf{v} \in P(G(x)) \right \}.
		\]
		
		The set $\mathcal F = \bigcup_{G(x) \in \mathcal G} \mathcal R(G(x))$ contains all candidates for the polynomial $F(x)$.
		For each $p(x) \in \mathcal F$, the polynomial $c(x) := (x+5)^{33}p(x)$ is a candidate for the characteristic polynomial of $S$.
		We now check that the polynomial $c(x)$ satisfies the necessary conditions for $c(x)$ being the characteristic polynomial of a Seidel matrix of order $50$.
		By Remark~\ref{rem:top3coeffs}, the top three coefficients of $c(x)$ must be $1$, $0$, and $-50\cdot 49/2 = -1225$ respectively.
		Hence the top three coefficients of $p(x)$ must be $1$, $-165$, and $12800$, respectively.
		Define
		\[
			\mathcal C := \{ (x+5)^{33}p(x) \; | \; p(x) \in \mathcal F \text{ and } p(x) = x^{17} - 165x^{16} + 12800x^{15} + \dots  \}.
		\]
		Recall that $S$ is the Seidel matrix for $\mathcal L$, a putative set of $50$ equiangular lines in $\mathbb R^{17}$.
		The set $\mathcal C$, which consists of $102$ polynomials, contains all candidates for the characteristic polynomial $\chi_{S}(x)$ of $S$.
		%
		%
		Write $c(x-1) = \sum_{i=0}^{50} a_i x^{50-i}$.
		By Corollary~\ref{cor:evencond}, for each $i \in \{0,\dots,50\}$, the coefficient $a_i$ must be divisible by $2^i$.
		This condition weeds out all but three possibilities from $\mathcal C$, and we thus find that $\chi_S(x)$ must be one of
		\begin{align*}
			&(x + 5)^{33} (x - 9)^{10} (x - 11)^{5} (x^2 - 20x + 95), \\ 
			&(x+5)^{33}(x-9)^{12}(x-11)^4(x-13),  \\ 
			& \text{ or }(x+5)^{33}(x-7)(x-9)^9(x-11)^7.
		\end{align*}
		This proves Proposition~\ref{pro:onlyints}.

		\section{Nonexistence results for Seidel matrices with certain characteristic polynomials}
		\label{sec:nonexist}
		
		In this section we show that there does not exist a Seidel matrix having any of the characteristic polynomials of Proposition~\ref{pro:onlyints}, thereby showing that there cannot exist $50$ equiangular lines in $\mathbb R^{17}$.
		
		Let $M$ be a real symmetric matrix of order $n$ having $m$ distinct eigenvalues $\lambda_1 > \lambda_2 > \dots > \lambda_m$.
		We write $\Lambda(M) = \{\lambda_1,\dots,\lambda_m\}$ for the set of distinct eigenvalues of $M$.
		For each $i \in \{1,\dots,m\}$, denote by $\mathcal{E}(\lambda_i)$ the eigenspace of $\lambda_i$ and let $\{\mathbf{e}_1,\dots,\mathbf{e}_n\}$ be the standard basis of $\mathbb R^n$.
		Denote by $P_i$ the orthogonal projection of $\mathbb R^n$ onto $\mathcal{E}(\lambda_i)$.
		We write $\alpha_{ij} = ||P_i \mathbf{e}_j||$ for all $i \in \{1,\dots,m\}$ and $j \in \{1,\dots,n\}$.
		As is customary, we refer to the numbers $\alpha_{ij}$ as the \textbf{angles} of $M$.
		
		\begin{proposition}\label{prop:sum_angles}\cite[Prop. 4.2.1]{crs97}
			Let $M$ be a real symmetric matrix of order $n$ with $\Lambda(M) = \{\lambda_1,\dots,\lambda_m\}$ and angles $\alpha_{ij}$.
			Then
			$\sum^n_{j=1}\alpha^2_{ij}=\dim \mathcal{E}(\lambda_i)$ for each $i\in \{1,\dots,m\}$, and $\sum^m_{i=1}\alpha^2_{ij}=1$ for each $j \in \{1,\dots,n\}$.
		\end{proposition}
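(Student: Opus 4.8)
The plan is to reduce everything to the spectral decomposition of the real symmetric matrix $M$. By the spectral theorem, $\mathbb R^n$ decomposes as the orthogonal direct sum $\mathbb R^n = \bigoplus_{i=1}^m \mathcal E(\lambda_i)$, so the orthogonal projections $P_1,\dots,P_m$ satisfy $P_i = P_i^\top = P_i^2$ for each $i$ and $\sum_{i=1}^m P_i = I$. The one computation underlying both identities is that each squared angle is a diagonal entry of the corresponding projection: for all $i \in \{1,\dots,m\}$ and $j \in \{1,\dots,n\}$,
\[
\alpha_{ij}^2 = \|P_i \mathbf{e}_j\|^2 = \mathbf{e}_j^\top P_i^\top P_i \mathbf{e}_j = \mathbf{e}_j^\top P_i \mathbf{e}_j = (P_i)_{jj}.
\]

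First I would establish the row sums. Summing the displayed identity over $j$ gives $\sum_{j=1}^n \alpha_{ij}^2 = \sum_{j=1}^n (P_i)_{jj} = \tr(P_i)$. Since $P_i$ is the orthogonal projection onto $\mathcal E(\lambda_i)$, it is diagonalisable with eigenvalue $1$ of multiplicity $\dim \mathcal E(\lambda_i)$ and eigenvalue $0$ otherwise, so $\tr(P_i) = \dim \mathcal E(\lambda_i)$; this proves the first claim. Next, the column sums follow at once from the resolution of the identity: for each fixed $j$,
\[
\sum_{i=1}^m \alpha_{ij}^2 = \sum_{i=1}^m (P_i)_{jj} = \Bigl( \sum_{i=1}^m P_i \Bigr)_{jj} = I_{jj} = 1,
\]
which is the second claim.

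I do not expect a genuine obstacle here: the statement is an immediate consequence of the spectral decomposition, and the only point requiring (minor) care is the justification that the eigenspaces of a real symmetric matrix are pairwise orthogonal and span $\mathbb R^n$, so that the projections $P_i$ really do sum to the identity and are self-adjoint — this is precisely the spectral theorem and may simply be invoked. Everything else is the bookkeeping recorded above.
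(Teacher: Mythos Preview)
Your proof is correct and is precisely the standard argument via the spectral decomposition; the paper does not supply its own proof of this proposition but simply cites \cite[Prop.~4.2.1]{crs97}, where the same reasoning appears. There is nothing to add.
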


		Now we define the \textbf{angle matrix} of $M$ to be the $n$-by-$m$ matrix with $(j,i)$-entry equal to $\alpha^2_{ij}$.
		Note that
		the angle matrix defined here is the transpose of the angle matrix defined in \cite[Chapter 4]{crs97}.
		Let $M[r]$ denote the principal submatrix of $M$ obtained by deleting the $r$-th row and column of $M$.
		For a proof of the next result see \cite[(4.2.8)]{crs97} or \cite{GoMK}.
		
		\begin{proposition}\label{prop:submatrix}
			Let $M$ be a real symmetric matrix of order $n$ with $\Lambda(M) = \{\lambda_1,\dots,\lambda_m\}$ and angles $\alpha_{ij}$.
			Then, for each $j \in \{1,\dots,n\}$, we have
		$$\chi_{M[j]}(x)=\chi_M(x)\sum^m_{i=1}\dfrac{\alpha^2_{ij}}{x-\lambda_i}.$$
		\end{proposition}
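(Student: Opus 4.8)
The plan is to use the spectral resolution of $M$ together with Cramer's rule for the resolvent $(xI-M)^{-1}$. Write the spectral decomposition $M = \sum_{i=1}^{m} \lambda_i P_i$, where each $P_i$ is the orthogonal projection onto $\mathcal E(\lambda_i)$, so that the $P_i$ are symmetric, satisfy $P_i^2 = P_i$ and $P_iP_k = 0$ for $i \ne k$, and sum to the identity. For every $x \notin \Lambda(M)$ the matrix $xI-M$ is invertible and
\[
  (xI-M)^{-1} = \sum_{i=1}^{m} \frac{1}{x-\lambda_i}\,P_i .
\]

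First I would record the elementary identity, valid for any matrix and any index $j$, that the $(j,j)$-entry of $(xI-M)^{-1}$ equals $\chi_{M[j]}(x)/\chi_M(x)$. This follows from $(xI-M)^{-1} = \operatorname{adj}(xI-M)/\det(xI-M)$ upon noting that the $(j,j)$-entry of $\operatorname{adj}(xI-M)$ is the $(j,j)$-cofactor of $xI-M$, which is $\det\big((xI-M)[j]\big)$; since deleting the $j$-th row and column commutes with forming $xI-M$, we have $(xI-M)[j] = xI - M[j]$, so this determinant is $\chi_{M[j]}(x)$, while the denominator is $\det(xI-M) = \chi_M(x)$.

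Next I would compute the same entry from the spectral side: the $(j,j)$-entry of $\sum_i (x-\lambda_i)^{-1}P_i$ is $\sum_i (x-\lambda_i)^{-1}\,\mathbf e_j^\top P_i \mathbf e_j$, and since $P_i$ is a symmetric idempotent, $\mathbf e_j^\top P_i \mathbf e_j = \mathbf e_j^\top P_i^\top P_i \mathbf e_j = \|P_i\mathbf e_j\|^2 = \alpha_{ij}^2$. Equating the two computations gives
\[
  \frac{\chi_{M[j]}(x)}{\chi_M(x)} = \sum_{i=1}^{m} \frac{\alpha_{ij}^2}{x-\lambda_i},
\]
and multiplying through by $\chi_M(x)$ yields the claimed identity for all $x \notin \Lambda(M)$. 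Since $M$ is diagonalizable, $\chi_M(x) = \prod_{i=1}^m (x-\lambda_i)^{\dim \mathcal E(\lambda_i)}$ is divisible by each $x-\lambda_i$, so both sides are genuine polynomials; agreeing at infinitely many points, they are equal as polynomials.

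There is no serious obstacle here: the proof is routine linear algebra. The only points needing a little care are the cofactor computation in the first step — in particular the observation that $(xI-M)[j] = xI-M[j]$ — and the closing remark that an identity of rational functions valid off $\Lambda(M)$ upgrades to a polynomial identity once one checks that $\chi_M(x)\sum_i \alpha_{ij}^2/(x-\lambda_i)$ has no poles.
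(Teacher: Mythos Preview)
Your proof is correct and is exactly the standard argument via the spectral resolution and the cofactor/adjugate identity for the diagonal entries of the resolvent. The paper does not actually prove this proposition but simply refers the reader to \cite[(4.2.8)]{crs97} and \cite{GoMK}; the argument you have written is essentially the one found in those references, so there is nothing further to compare.
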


		By Proposition~\ref{prop:submatrix}, each characteristic polynomial of an $(n-1)$-by-$(n-1)$ principal submatrix of $M$ corresponds to a row of the angle matrix of $M$. 
		We take advantage of this fact in the main results of this section.
		
		In this section we will also make use of Cauchy's interlacing theorem~\cite{Cau:Interlace,Fisk:Interlace05,Hwang:Interlace04}.

		\begin{theorem}\label{thm:cauchyinterlace}
			Let $M$ be a real symmetric matrix having eigenvalues $\lambda_1 \leqslant \lambda_2 \leqslant \dots \leqslant \lambda_n$ and suppose $M[j]$, for some $j \in \{1,\dots,n \}$, has eigenvalues $\mu_1 \leqslant \mu_2 \leqslant \dots \leqslant \mu_{n-1}$.
			Then
			\[
				\lambda_1 \leqslant \mu_1 \leqslant \lambda_2 \leqslant \dots \leqslant \lambda_{n-1} \leqslant \mu_{n-1} \leqslant \lambda_n.
			\]
		\end{theorem}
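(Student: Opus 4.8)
The plan is to derive the interlacing inequalities from the Courant--Fischer min-max characterisation of the eigenvalues of a real symmetric matrix. Recall that for a real symmetric matrix $M$ of order $n$ with eigenvalues $\lambda_1 \leqslant \dots \leqslant \lambda_n$, one has
\[
\lambda_k = \min_{\substack{V \leqslant \mathbb R^n \\ \dim V = k}} \, \max_{\substack{x \in V \\ x \ne 0}} \frac{x^\top M x}{x^\top x} = \max_{\substack{V \leqslant \mathbb R^n \\ \dim V = n-k+1}} \, \min_{\substack{x \in V \\ x \ne 0}} \frac{x^\top M x}{x^\top x}
\]
for each $k \in \{1,\dots,n\}$. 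The key observation is that the quadratic form associated with $M[j]$ is simply the restriction of $x \mapsto x^\top M x$ to the hyperplane $W := \{ x \in \mathbb R^n : x_j = 0 \}$, which we identify with $\mathbb R^{n-1}$; hence the eigenvalues $\mu_1 \leqslant \dots \leqslant \mu_{n-1}$ of $M[j]$ obey the same two formulas, but with $V$ ranging only over subspaces of $W$.

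First I would establish $\lambda_k \leqslant \mu_k$ for each $k \in \{1,\dots,n-1\}$ using the ``min-max'' form: $\mu_k$ is the minimum of $\max_{0 \ne x \in V}(x^\top M x)/(x^\top x)$ over the $k$-dimensional subspaces $V$ of $W$, while $\lambda_k$ is the minimum of the same quantity over all $k$-dimensional subspaces of $\mathbb R^n$, so the family over which $\mu_k$ is computed is contained in that for $\lambda_k$. Then I would establish $\mu_k \leqslant \lambda_{k+1}$ for each $k \in \{1,\dots,n-1\}$ by switching to the ``max-min'' form: $\mu_k$ equals the maximum over $(n-k)$-dimensional subspaces $V$ of $W$ of $\min_{0 \ne x \in V}(x^\top M x)/(x^\top x)$, whereas $\lambda_{k+1}$ (note $n-(k+1)+1 = n-k$) equals the maximum of the same quantity over all $(n-k)$-dimensional subspaces of $\mathbb R^n$, again a larger family. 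Concatenating these two chains of inequalities gives exactly $\lambda_1 \leqslant \mu_1 \leqslant \lambda_2 \leqslant \dots \leqslant \lambda_{n-1} \leqslant \mu_{n-1} \leqslant \lambda_n$.

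Since Courant--Fischer does all the work, there is no substantial obstacle; the only thing to be careful about is the dimension bookkeeping in the two applications (that $k$-dimensional subspaces produce $\mu_k$ in the min-max form and $(n-k)$-dimensional subspaces produce $\mu_k$ in the max-min form for a matrix of order $n-1$), together with a sanity check of the extreme cases $k=1$ and $k=n-1$. An alternative route, also self-contained within this excerpt, would start from Proposition~\ref{prop:submatrix}: the rational function $\sum_{i=1}^m \alpha_{ij}^2/(x-\lambda_i)$ is strictly decreasing on each open interval between consecutive distinct eigenvalues of $M$ and so has exactly one zero there, which after keeping track of multiplicities (in particular of the $\lambda_i$ with $\alpha_{ij}=0$) reproduces the interlacing pattern. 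I would regard the min-max argument as the cleaner of the two and would present that.
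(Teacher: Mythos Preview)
Your min--max argument via Courant--Fischer is correct and is a standard proof of Cauchy interlacing. The paper does not give its own proof of this theorem; it simply states the result and cites \cite{Cau:Interlace,Fisk:Interlace05,Hwang:Interlace04}, of which the Hwang and Fisk references use essentially the same variational approach you describe.
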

		
		Next we prove a result about the top three coefficients of the characteristic polynomial of a principal submatrix of a Seidel matrix.
		
		\begin{lemma}\label{lem:coeff_of_interlacepoly}
		Let $S$ be a Seidel matrix of order $n$ and let $T = S[j]$ for some $j \in \{1,\dots,n\}$. 
		Suppose $S$ has minimal polynomial $m_S(x) = \sum_{i=0}^d a_i x^{d-i}$.
		Then 
		\[
		\chi_{T}(x)=\dfrac{\chi_S(x)}{m_S(x)}\sum_{i=0}^{d-1} b_i x^{d-1-i},
		\]
		where $b_0 = 1$, $b_1 = a_1$, $b_2 = a_2+n-1$, and $b_i \in \mathbb Z$ for $i \in \{3,\dots,d-1\}$.
		\end{lemma}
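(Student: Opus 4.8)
The plan is to use Proposition~\ref{prop:submatrix} to write $\chi_T(x)$ explicitly in terms of the angles $\alpha_{ij}$ of $S$, and then identify the rational function $\sum_i \alpha_{ij}^2/(x-\lambda_i)$ with a quotient of the form $q(x)/m_S(x)$ for some polynomial $q(x)$ of degree $d-1$. Indeed, since the poles of $\sum_i \alpha_{ij}^2/(x-\lambda_i)$ occur only at those distinct eigenvalues $\lambda_i$ of $S$ (at most $d$ of them, the roots of $m_S(x)$), clearing denominators gives $\chi_T(x) m_S(x) = \chi_S(x)\, q(x)$ with $q(x) = \sum_{i=0}^{d-1} b_i x^{d-1-i}$ a monic polynomial of degree $d-1$; thus $\chi_T(x) = (\chi_S(x)/m_S(x))\, q(x)$, and the coefficients $b_i$ are \emph{a priori} rational.

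First I would pin down $b_0, b_1, b_2$ by comparing low-order symmetric-function data of $T = S[j]$ against those of $S$. Since $S$ is a Seidel matrix, $\tr S = 0$, $\tr S^2 = n(n-1)$, and deleting row/column $j$ removes a zero diagonal entry and a row whose off-diagonal entries are all $\pm 1$, so $\tr T = 0$ and $\tr T^2 = (n-1)(n-2)$. On the other hand, multiplying $q(x)$ by $\chi_S(x)/m_S(x)$ and reading off the top three coefficients: if $\chi_S(x)/m_S(x)$ has top coefficients $1, c_1, c_2$ and $m_S(x) = x^d + a_1 x^{d-1} + a_2 x^{d-2} + \cdots$, then Newton's identities relate $c_1, c_2$ to $a_1, a_2$ via $c_1 = (\tr S) - a_1$-type relations — more cleanly, I would just use that $\chi_S(x) = m_S(x)\cdot(\chi_S/m_S)(x)$ forces $\tr S$ and $\tr S^2$ to determine the top coefficients of $\chi_S/m_S$ in terms of $a_1, a_2$, and then impose $\tr T = 0$, $\tr T^2 = (n-1)(n-2)$ on the product $(\chi_S/m_S)\cdot q$ to solve for $b_1$ and $b_2$. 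The bookkeeping gives $b_0 = 1$, $b_1 = a_1$ (both traces vanish), and $b_2 = a_2 + n - 1$ (the deficit $n(n-1) - (n-1)(n-2) = 2(n-1)$ in $\tr S^2$ versus $\tr T^2$ accounts for the shift, dividing by $2$ via Newton's identity for the second power sum).

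The main obstacle — and the only genuinely non-formal point — is showing $b_i \in \mathbb Z$ for $i \in \{3,\dots,d-1\}$, i.e.\ that $q(x) = \chi_T(x)\, m_S(x)/\chi_S(x)$ has integer coefficients despite the angles $\alpha_{ij}^2$ being merely rational. Here I would argue as follows: $\chi_T(x)$ is monic with integer coefficients (as $T$ is an integer matrix), $\chi_S(x)$ and $m_S(x)$ are monic with integer coefficients, and $m_S(x) \mid \chi_S(x)$ in $\mathbb Z[x]$ (the minimal polynomial divides the characteristic polynomial over $\mathbb Q$, and by Gauss's lemma the cofactor $\chi_S/m_S$ lies in $\mathbb Z[x]$ since both are monic). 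So $q(x) = \chi_T(x) / (\chi_S(x)/m_S(x))$ is a priori a polynomial with rational coefficients; but $\chi_S(x)/m_S(x)$ is monic, so polynomial division of the integer polynomial $\chi_T(x)\,m_S(x)$ (degree $n-1+d$, integer coefficients) by the monic integer polynomial $\chi_S(x)$ (degree $n$) yields an integer quotient — \emph{provided} the division is exact, which it is by Proposition~\ref{prop:submatrix}. Hence $q(x) \in \mathbb Z[x]$, and combined with the computation of $b_0, b_1, b_2$ above this completes the proof. Throughout I am treating the case $d \geqslant 3$; for $d \leqslant 2$ the statement about $b_3,\dots,b_{d-1}$ is vacuous and $b_0, b_1, b_2$ (as many as are defined) are handled by the same trace computation.
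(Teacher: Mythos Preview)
Your proposal is correct and follows the same route the paper sketches: apply Proposition~\ref{prop:submatrix} to get $\chi_T(x)=\chi_S(x)\sum_i \alpha_{ij}^2/(x-\lambda_i)$, clear denominators against $m_S(x)$, and determine $b_0,b_1,b_2$ from $\tr S=\tr T=0$, $\tr S^2=n(n-1)$, $\tr T^2=(n-1)(n-2)$. The paper's proof is a one-line pointer to exactly these ingredients; your additional care with the integrality of the remaining $b_i$ (via exact division of $\chi_T(x)m_S(x)$ by the monic integer polynomial $\chi_S(x)$) is a detail the paper leaves implicit.
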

		\begin{proof}
			The proof is a straightforward consequence of Proposition~\ref{prop:submatrix}
, using the fact that $\tr(S)=\tr(T)=0$, $\tr(S^2) = n(n-1)$, and $\tr(T^2) = (n-1)(n-2)$.
		\end{proof}
		
		The next result will be used repeatedly in this section.
		
		\begin{lemma}\label{lem:16elements}
			Let $\mathcal P$ denote the set of congruence classes of characteristic polynomials of Seidel matrices of order $49$ modulo $32\mathbb Z[x]$.
			Then $\mathcal P$ has $16$ elements.
		\end{lemma}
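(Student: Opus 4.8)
The plan is to get the upper bound $|\mathcal P|\leqslant 16$ for free from Corollary~\ref{cor:countCharPolySeidelOdd}, and then to establish the matching lower bound by exhibiting enough explicit Seidel matrices of order $49$.

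For the upper bound, Corollary~\ref{cor:countCharPolySeidelOdd} with $n=49$ and $e=5$ gives $|\mathcal P|\leqslant 2^{\binom{3}{2}+1}=16$ at once. I would also re-examine its proof in this case to pin down exactly which classes can occur, since that is what the lower bound must hit. Let $S$ be a Seidel matrix of order $49$; by Theorem~\ref{thm:OddNEuler} its switching class contains a unique Euler graph $\Gamma$, and with $A$ the adjacency matrix of $\Gamma$ we have $\chi_S(x)=\chi_{J-2A}(x+1)$. Writing $\chi_{J-2A}(x)=\sum_{i=0}^{49}a_i x^{49-i}$ and using that $x\mapsto x+1$ is an automorphism of $\mathbb Z[x]/32\mathbb Z[x]$, the class of $\chi_S$ modulo $32\mathbb Z[x]$ is determined by $(a_i\bmod 32)_i$. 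Here $a_0=1$, $a_1=-49$, $a_2=0$ are fixed by Remark~\ref{rem:a1a2}; Lemma~\ref{lem:coefficientMap} gives $\nu_2(a_i)\geqslant i-1$, so $a_i\equiv 0\pmod{32}$ for $i\geqslant 6$; Lemma~\ref{lem:adet} (case $k=2$) shows $a_5\bmod 32$ is a fixed function of the lower coefficients; and Lemma~\ref{lem:oddcond1} together with Lemma~\ref{lem:coefficientMap} give $16\mid a_4$ and $4\mid a_3$. Hence the class of $\chi_S$ is determined by the pair $(a_3\bmod 32,\,a_4\bmod 32)\in(4\mathbb Z/32\mathbb Z)\times(16\mathbb Z/32\mathbb Z)$, a set of $8\cdot 2=16$ elements; this recovers $|\mathcal P|\leqslant 16$ and, more usefully, lists the $16$ candidate classes.

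For the lower bound I would produce $16$ Seidel matrices of order $49$ realizing all $16$ of these pairs. I do not expect a slick algebraic family, so the natural route, consistent with the computational parts of Sections~\ref{sec:a_modular_characterisation_of_the_characteristic_polynomial_of_a_seidel_matrix} and~\ref{sec:50lines}, is a computer search: for instance over switching-class representatives of random graphs on $49$ vertices, or over small controlled edge-toggles of a fixed Euler graph of order $49$ (toggling a few edges perturbs the edge count, degree sequence, and triangle count, hence $a_3$ and $a_4$, in ways one can predict modulo $32$ from Lemma~\ref{lem:coefficientMap}). Once $16$ witnesses are found, the verification is a routine finite check: compute each characteristic polynomial modulo $32$ and confirm that the resulting $16$ pairs $(a_3\bmod 32,\,a_4\bmod 32)$ are pairwise distinct, and record the $16$ polynomials (or pairs) in a table.

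The main obstacle is precisely this realization step: there is no evident construction that pins down $(a_3,a_4)$ modulo $32$ at will, so the content of the lower bound lies in finding the $16$ witnesses. The two supporting points needing care are (i) converting the abstract count in Corollary~\ref{cor:countCharPolySeidelOdd} into the explicit parametrization by $(a_3,a_4)$, which requires tracking $2$-adic valuations through Lemmas~\ref{lem:coefficientMap}, \ref{lem:oddcond1}, and~\ref{lem:adet}, and (ii) confirming that the exhibited Seidel matrices really are pairwise incongruent modulo $32\mathbb Z[x]$.
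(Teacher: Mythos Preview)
Your proposal is correct and follows essentially the same approach as the paper: the upper bound comes directly from Corollary~\ref{cor:countCharPolySeidelOdd} with $n=49$, $e=5$, and the lower bound is obtained by a computer search exhibiting $16$ distinct classes. Your additional explicit parametrization of the candidate classes by $(a_3\bmod 32,\,a_4\bmod 32)$ goes a bit beyond what the paper spells out, but it is consistent with (and effectively unwinds) the inductive argument behind Corollary~\ref{cor:countCharPolySeidelOdd}.
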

		\begin{proof}
			By Corollary~\ref{cor:countCharPolySeidelOdd}, the set $\mathcal P$ has cardinality at most $16$.
			Using a computer, it is straightforward to construct the set $\mathcal P$, and confirm that it has precisely $16$ elements.
		\end{proof}
		
		Now we can prove the first main result of this section.
		
		\begin{theorem}\label{thm:first_pol}
		There does not exist a Seidel matrix $S$ with the characteristic polynomial $\chi_S(x) = (x+5)^{33}(x-9)^{10}(x-11)^5(x^2-20x+95)$. 
		\end{theorem}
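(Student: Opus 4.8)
The plan is to argue by contradiction: suppose such a Seidel matrix $S$ of order $50$ exists, and exploit the interplay between its principal submatrices of order $49$ and the modular restrictions of Corollary~\ref{cor:countCharPolySeidelOdd}. Concretely, $S$ has minimal polynomial $m_S(x) = (x+5)(x-9)(x-11)(x^2-20x+95)$ (degree $5$, since the five factors of $\chi_S$ are coprime), so by Lemma~\ref{lem:coeff_of_interlacepoly} every principal submatrix $T = S[j]$ has $\chi_T(x) = \frac{\chi_S(x)}{m_S(x)}\, q_j(x)$, where $q_j(x) = x^4 + b_1 x^3 + b_2 x^2 + b_3 x + b_4$ with $b_1 = -20$, $b_2 = 95 + 49 = 144$ fixed, and $b_3, b_4 \in \mathbb Z$ still free. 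I would then pin down the possible pairs $(b_3,b_4)$.

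First I would bound $(b_3, b_4)$ using interlacing. By Cauchy's theorem (Theorem~\ref{thm:cauchyinterlace}), the eigenvalues of $T$ interlace those of $S$; since $-5$ has multiplicity $33$ in $S$, it has multiplicity at least $32$ in $T$, so $\frac{\chi_S(x)}{m_S(x)} = (x+5)^{32}(x-9)^9(x-11)^4(x^2-20x+95)$ already accounts for those, and the remaining four eigenvalues of $T$ (the roots of $q_j$) must lie in the closed intervals dictated by interlacing with the distinct eigenvalues of $S$ — namely in $[-5,9]\cup[9,11]\cup[11,\mu_{\max}]$ where $\mu_{\max}$ is the largest root of $x^2-20x+95$, i.e.\ $10+\sqrt5$. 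Additionally $q_j$ must be a totally real polynomial (it is a factor of the characteristic polynomial of a symmetric matrix) with trace $20$ and second symmetric function determined. This is a finite search: I would enumerate all integer quadruples $(1,-20,144,b_3,b_4)$ with all roots real and in the allowed union of intervals, which can be done by the Robinson–McKee–Smyth enumeration algorithm already described in Section~\ref{sub:polynomial_enumeration_algorithm} (with trace and $a_2$ fixed, exactly the variant the paper flagged it would use). This yields a small explicit list $\mathcal Q$ of candidate polynomials $q_j$, hence a small list of candidate $\chi_T(x)$.

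Next I would bring in the modular obstruction. Each $T = S[j]$ is itself a Seidel matrix, now of order $49$; hence $\chi_T(x) \bmod 32\mathbb Z[x]$ must lie in the $16$-element set $\mathcal P$ of Lemma~\ref{lem:16elements}. I would compute $(x+5)^{32}(x-9)^9(x-11)^4(x^2-20x+95)\cdot q(x) \bmod 32\mathbb Z[x]$ for each $q \in \mathcal Q$ and discard those not lying in $\mathcal P$. If this eliminates \emph{every} candidate, then $S$ can have no principal submatrix of order $49$ at all — absurd — and the theorem follows. If some survive, I would use the second, global constraint: by Proposition~\ref{prop:sum_angles} and Proposition~\ref{prop:submatrix}, the fifty polynomials $\chi_{S[j]}(x)$ correspond to the fifty rows of the angle matrix of $S$, whose columns sum appropriately (each column sum equals $\dim\mathcal E(\lambda_i)$, and the rows sum to $1$); reading off from $\chi_{S[j]}(x) = \chi_S(x)\sum_i \alpha_{ij}^2/(x-\lambda_i)$, each surviving $q$ determines a nonnegative rational angle vector, and the requirement that these fifty vectors have column sums $(33,9,4,1)$ (with the last two eigenvalues each contributing together via the quadratic factor) forces a counting contradiction.

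The main obstacle I anticipate is controlling the quadratic factor $x^2-20x+95$: its roots are irrational conjugates, so when I write $\chi_T(x)/\bigl(\tfrac{\chi_S(x)}{m_S(x)}\bigr) = q_j(x)$, the polynomial $q_j$ need not split off the quadratic factor cleanly, and I must track the $\alpha^2$-weight attached to the conjugate pair as a single Galois-stable block while still ensuring $q_j \in \mathbb Z[x]$ and totally real. Handling this correctly — i.e.\ verifying that the partial-fraction coefficients corresponding to $10\pm\sqrt5$ are equal and nonnegative, and that they aggregate to the correct total across all $50$ submatrices — is where the argument needs care; the rest is the (finite, computer-assisted) enumeration of $\mathcal Q$ and the modular filtering against $\mathcal P$.
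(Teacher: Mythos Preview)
Your strategy is exactly the paper's: use Lemma~\ref{lem:coeff_of_interlacepoly} to write $\chi_{S[j]}$ as $(\chi_S/m_S)\cdot q_j$ with two free integer coefficients, enumerate the finitely many $q_j$ allowed by interlacing, filter against the sixteen residues of Lemma~\ref{lem:16elements}, and finish with the angle-matrix column sums from Propositions~\ref{prop:sum_angles}--\ref{prop:submatrix}. A few arithmetic slips to repair before the enumeration will run correctly: since $m_S(x)=(x+5)(x-9)(x-11)(x^2-20x+95)=x^5-35x^4+394x^3+\cdots$, Lemma~\ref{lem:coeff_of_interlacepoly} gives $b_1=-35$ and $b_2=394+49=443$ (not $-20$ and $144$); the quotient $\chi_S/m_S$ is $(x+5)^{32}(x-9)^9(x-11)^4$ \emph{without} the quadratic factor; there are five distinct eigenvalues $-5<10-\sqrt5<9<11<10+\sqrt5$, so $10-\sqrt5$ is one of the interlacing separators and the target column-sum vector is $(33,1,10,5,1)$, not $(33,9,4,1)$.
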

		\begin{proof}
			Suppose for a contradiction that $S$ is a Seidel matrix with the proposed spectrum, and let $\chi_S(x)$ be its characteristic polynomial. 
			Delete a row and (its corresponding) column of $S$ to form the Seidel matrix $S^\prime$.
			By Lemma~\ref{lem:coeff_of_interlacepoly}, we have
			\[
				\chi_{S^\prime}(x) = \frac{(x+5)^{33}(x-9)^{10}(x-11)^5(x^2-20x+95)}{(x+5)(x-9)(x-11)(x^2-20x+95)}(x^4-35x^3+443x^2-r_1x+r_0),
			\] for some integers $r_1$ and $r_0$.
			By Theorem~\ref{thm:cauchyinterlace}, the zeros of $\chi_{S^\prime}(x)$ must interlace those of $\chi_{S}(x)$.
			Hence the four zeros $\gamma_1 \leqslant \gamma_2 \leqslant \gamma_3 \leqslant \gamma_4$ of $f(x)=x^4-35x^3+443x^2-r_1x+r_0$ must satisfy $\gamma_1 \in [-5,10-\sqrt{5}]$, $\gamma_2 \in [10-\sqrt{5},9]$, $\gamma_3 \in [9,11]$, and $\gamma_4 \in [11,10+\sqrt{5}]$.
			Therefore, using the ideas in Section~\ref{sub:polynomial_enumeration_algorithm}, we find 286 possibilities for $f(x)$.
			
		Let $\mathcal P$ denote the set of congruence classes of characteristic polynomials of Seidel matrices of order $49$ modulo $32\mathbb Z[x]$.
		Using Lemma~\ref{lem:16elements}, we can produce all $16$ elements of $\mathcal P$.
		By checking against the congruence classes in $\mathcal P$, we reduce the number of possibilities for $f(x)$ down to three, that is, $f(x) = x^4-35x^3+443x^2-2381x+4516$, $f(x) = x^4-35x^3+443x^2-2369x+4392$, or $f(x) = x^4-35x^3+443x^2-2365x+4356$.  
		Each possibility for $f(x)$ corresponds to a putative characteristic polynomial for $S^\prime$.
		
		Using Proposition~\ref{prop:submatrix}, we find the row of the angle matrix of $S$ corresponding to each of the three putative characteristic polynomials for $S^\prime$.
		Using the ordering $\lambda_1 < \lambda_2 < \dots < \lambda_5$, where $\Lambda(S) = \{\lambda_1,\dots,\lambda_5\}$,
		we write these rows below.
			\begin{center}
			$\begin{array}{cccccc} 
			\mathbf{r}_1= & (2031/3080 & 2/55+\sqrt{5}/110 & 1/7 & 1/8 & 2/55-\sqrt{5}/110) \\
			\mathbf{r}_2= &(577/880 & 31/220+\sqrt{5}/44 & 0 & 1/16 & 31/220-\sqrt{5}/44) \\
			\mathbf{r}_3= &(36/55 & 19/110+\sqrt{5}/55 & 0 & 0 & 19/110-\sqrt{5}/55)
			\end{array}$ 
				\end{center}
		If $S$ exists then, by Proposition~\ref{prop:sum_angles}, there must exist nonnegative integers $n_1,n_2,n_{3}$ such that $\sum_{i=1}^{3} n_i=50$ and $\sum_{i=1}^{3} n_i\mathbf{r}_i = (33,1,10,5,1)$.
		Since $\mathbf{r}_1$ is the only row to have a nonzero in the third column, we must have $n_1 = 70$, which is clearly impossible.
		Thus such a Seidel matrix $S$ cannot exist.	
		\end{proof}
		
		The proof of the next theorem is similar to that of Theorem~\ref{thm:first_pol}.
			
		\begin{theorem}\label{thm:second_pol}
		There does not exist a Seidel matrix $S$ with the characteristic polynomial $\chi_S(x) = (x+5)^{33}(x-9)^{12}(x-11)^4(x-13)$. 
		\end{theorem}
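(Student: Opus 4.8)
The plan is to follow the strategy of the proof of Theorem~\ref{thm:first_pol}. Suppose for a contradiction that $S$ is a Seidel matrix (necessarily of order $33+12+4+1=50$) with $\chi_S(x) = (x+5)^{33}(x-9)^{12}(x-11)^4(x-13)$. Then the minimal polynomial of $S$ is $m_S(x) = (x+5)(x-9)(x-11)(x-13) = x^4 - 28x^3 + 194x^2 + 508x - 6435$. Delete a row and its corresponding column of $S$ to obtain a Seidel matrix $S^\prime$ of order $49$. By Lemma~\ref{lem:coeff_of_interlacepoly},
\[
	\chi_{S^\prime}(x) = (x+5)^{32}(x-9)^{11}(x-11)^3 \left( x^3 - 28x^2 + 243x + r \right)
\]
for some integer $r$, since here $b_1 = a_1 = -28$ and $b_2 = a_2 + 49 = 194 + 49 = 243$. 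By Cauchy's interlacing theorem (Theorem~\ref{thm:cauchyinterlace}), the zeros $\gamma_1 \leqslant \gamma_2 \leqslant \gamma_3$ of $f(x) := x^3 - 28x^2 + 243x + r$ must satisfy $\gamma_1 \in [-5,9]$, $\gamma_2 \in [9,11]$, and $\gamma_3 \in [11,13]$.

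First I would apply the enumeration algorithm of Section~\ref{sub:polynomial_enumeration_algorithm} with degree $3$, trace fixed at $28$, second coefficient fixed at $243$, and the zeros confined to the three subintervals above; this yields a finite list of candidate polynomials $f(x)$, hence of candidate polynomials $\chi_{S^\prime}(x)$. To prune this list I would invoke Lemma~\ref{lem:16elements}: every characteristic polynomial of a Seidel matrix of order $49$ reduces modulo $32\mathbb{Z}[x]$ to one of the $16$ explicitly computable congruence classes, so any $f(x)$ for which $\chi_{S^\prime}(x)$ fails this test is discarded. Only a small number of candidates for $\chi_{S^\prime}(x)$ should remain.

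For each surviving candidate $\chi_{S^\prime}(x)$, Proposition~\ref{prop:submatrix} --- via the partial fraction expansion of $\chi_{S^\prime}(x)/\chi_S(x)$ in terms of the distinct eigenvalues $-5 < 9 < 11 < 13$ --- produces the corresponding row of the angle matrix of $S$. If $S$ exists, then by Proposition~\ref{prop:sum_angles} there must exist nonnegative integers $n_1, n_2, \dots$, one for each distinct row type, with $\sum_i n_i = 50$ and $\sum_i n_i \mathbf{r}_i = (33,12,4,1)$, the vector of eigenspace dimensions. The concluding step is to show that this small system of linear equations has no solution in nonnegative integers; exactly as in Theorem~\ref{thm:first_pol}, I would expect some single column (plausibly the one belonging to the simple eigenvalue $13$, or to the eigenvalue $11$) to be supported on essentially one row type, forcing an absurd value of some $n_i$, or else an outright arithmetic incompatibility among the equations.

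The main obstacle is ensuring that the two sieves --- interlacing together with the modular condition of Lemma~\ref{lem:16elements} --- reduce the candidate set for $\chi_{S^\prime}(x)$ enough that the angle-matrix computation becomes decisive. If too many candidates survive, the argument would have to be iterated (deleting a further row and column, or analysing $2$-by-$2$ principal submatrices through the angle matrix), so the delicate point is to check that, for each remaining candidate, the recovered angle-matrix row is nonnegative (and suitably rational), whereupon Proposition~\ref{prop:sum_angles} can be applied and the resulting Diophantine system shown to be infeasible; this is precisely the mechanism that furnished the contradiction in Theorem~\ref{thm:first_pol}.
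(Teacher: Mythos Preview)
Your proposal is correct and follows essentially the same route as the paper's proof. In the paper the interlacing constraint gives $-r \in \{-624,\dots,-616\}$ (so nine candidates), the modular sieve of Lemma~\ref{lem:16elements} leaves only the single value corresponding to $x^3-28x^2+243x-616=(x-11)(x^2-17x+56)$, and hence every row of the angle matrix equals $(83/126,\,2/7,\,0,\,1/18)$; the contradiction is then immediate from $50\cdot 83/126 \ne 33$, exactly the mechanism you anticipated.
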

		\begin{proof}
		Suppose for a contradiction that $S$ is a Seidel matrix with the proposed spectrum, and let $\chi_S(x)$ be its characteristic polynomial. 
		Delete a row and (its corresponding) column of $S$ to form the Seidel matrix $S^\prime$.
		By Lemma~\ref{lem:coeff_of_interlacepoly}, we have
		\[
			\chi_{S^\prime}(x) = \frac{(x+5)^{33}(x-9)^{12}(x-11)^4(x-13)}{(x+5)(x-9)(x-11)(x-13)}(x^3-28x^2+243x-r),
		\] for some integer $r$.
		By Theorem~\ref{thm:cauchyinterlace}, the zeros of $\chi_{S^\prime}(x)$ must interlace those of $\chi_{S}(x)$.
		Hence the three zeros $\gamma_1 \leqslant \gamma_2 \leqslant \gamma_3$ of $x^3-28x^2+243x-r$ must satisfy $\gamma_1 \in [-5,9]$, $\gamma_2 \in [9,11]$, and $\gamma_3 \in [11,13]$.
		Therefore, we find that we must have $r \in \{616,\dots, 624\}$ (see Section~\ref{sub:polynomial_enumeration_algorithm}).
		
		Let $\mathcal P$ denote the set of congruence classes of characteristic polynomials of Seidel matrices of order $49$ modulo $32\mathbb Z[x]$.
		Using Lemma~\ref{lem:16elements}, we can produce all $16$ elements of $\mathcal P$.
		By checking against the congruence classes in $\mathcal P$, we find that $r = 616$.
		Hence we must have $\chi_{S^\prime}(x) = (x+5)^{32}(x-9)^{11}(x-11)^4(x^2-17x+56)$. 
		
		By Proposition \ref{prop:submatrix}, we find that, for all $j \in \{1,\dots,50\}$, the $j$th row of the angle matrix of $S$ is $(\alpha^2_{1j},\ldots,\alpha^2_{4j})=(83/126, 2/7, 0, 1/18)$. 
		However,
		\[
			\sum^{50}_{j=1}\alpha^2_{1j} = 50\cdot 83/126  \ne \dim \mathcal{E}(\lambda_1) = 33.
		\]
		This contradicts Proposition \ref{prop:sum_angles}.
		Thus such a Seidel matrix $S$ cannot exist.
		\end{proof}
		
		It remains to deal with the third possibility for the characteristic polynomial in Proposition~\ref{pro:onlyints}.
		We first establish a preparatory nonexistence result whose proof uses the same ideas as the previous two proofs. 
		
		\begin{proposition}\label{prop:kill_sub}
		There does not exist a Seidel matrix $S$ with characteristic polynomial $\chi_S(x) = (x+5)^{32}(x-7)(x-9)^{8}(x-11)^6(x^2-15x+48)$.
		\end{proposition}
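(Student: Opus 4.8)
The plan is to adapt the strategy used for Theorem~\ref{thm:first_pol} and Theorem~\ref{thm:second_pol}. Suppose for a contradiction that $S$ is a Seidel matrix of order $49$ with $\chi_S(x) = (x+5)^{32}(x-7)(x-9)^{8}(x-11)^6(x^2-15x+48)$, and delete a row and its column to obtain a Seidel matrix $S'$ of order $48$. The minimal polynomial of $S$ is $m_S(x) = (x+5)(x-7)(x-9)(x-11)(x^2-15x+48)$, which has degree $6$, so Lemma~\ref{lem:coeff_of_interlacepoly} gives
\[
\chi_{S'}(x) = (x+5)^{31}(x-9)^{7}(x-11)^{5}\,q(x),
\]
where $q(x) = x^{5}-37x^{4}+530x^{3}+b_3x^{2}+b_4x+b_5$ for some integers $b_3,b_4,b_5$; the coefficients $-37$ and $530$ are forced by $\operatorname{tr} S=\operatorname{tr} S'=0$, $\operatorname{tr}(S^{2})=49\cdot 48$ and $\operatorname{tr}((S')^{2})=48\cdot 47$ via Lemma~\ref{lem:coeff_of_interlacepoly}.

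Next I would apply Cauchy's interlacing theorem (Theorem~\ref{thm:cauchyinterlace}). Writing $\beta_{\pm}=(15\pm\sqrt{33})/2$ for the two zeros of $x^{2}-15x+48$, the eigenvalues of $S$ in increasing order are $-5$ (multiplicity $32$), $\beta_-$, $7$, $9$ (multiplicity $8$), $\beta_+$, $11$ (multiplicity $6$). Comparing with the displayed factorisation of $\chi_{S'}(x)$, the five (necessarily real) zeros of $q(x)$ must lie one in each of the closed intervals $[-5,\beta_-]$, $[\beta_-,7]$, $[7,9]$, $[9,\beta_+]$, $[\beta_+,11]$. Feeding these constraints into the polynomial enumeration algorithm of Section~\ref{sub:polynomial_enumeration_algorithm}, with the trace fixed to $37$ and the coefficient of $x^{3}$ fixed to $530$, produces a finite list of candidates for $q(x)$, hence for $\chi_{S'}(x)$. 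This list can then be pruned: by Corollary~\ref{cor:countCharPolySeidelEven} there are at most $2^{\binom{3}{2}}=8$ congruence classes modulo $32\mathbb Z[x]$ among characteristic polynomials of Seidel matrices of order $48$, and after computing these explicitly one discards every candidate not lying in one of them.

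For each surviving candidate $\chi_{S'}(x)$, Proposition~\ref{prop:submatrix} determines the corresponding row of the angle matrix of $S$, indexed by the six distinct eigenvalues $-5<\beta_-<7<9<\beta_+<11$. If $S$ existed, then Proposition~\ref{prop:sum_angles} would force the existence of nonnegative integers $n_t$, one for each distinct admissible row $\mathbf r_t$, with $\sum_t n_t = 49$ and $\sum_t n_t\mathbf r_t = (32,1,1,8,1,6)$, the vector of eigenvalue multiplicities; one then checks that this system has no solution in nonnegative integers — for instance, as in the proof of Theorem~\ref{thm:first_pol}, because some column of the angle matrix is nonzero in too few admissible rows — which is the desired contradiction. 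I expect the main obstacle to be organisational rather than conceptual: the degree-$5$ enumeration yields considerably more candidates than the degree-$3$ and degree-$4$ enumerations of the earlier proofs, so the congruence pruning is essential to keep the final feasibility check tractable, and one must additionally be careful with the degenerate branches in which $q(x)$ shares a zero with $m_S(x)$ (so that $-5$, $9$ or $11$ occurs in $S'$ with one higher multiplicity) or $q(x)$ is divisible by $x^{2}-15x+48$.
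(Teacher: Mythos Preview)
Your proposal is correct and follows essentially the same route as the paper's own proof. The one difference worth noting is in the pruning step: the paper applies Corollary~\ref{cor:evencond} in its full strength (requiring $2^{r}\mid a_{r}$ for \emph{every} coefficient of $\chi_{S'}(x-1)$, not merely a congruence modulo $32$), which reduces the $22023$ interlacing candidates for $q(x)$ to exactly $12$; your mod-$32$ sieve via Corollary~\ref{cor:countCharPolySeidelEven} is a strictly weaker consequence of the same result and would likely leave more survivors for the final angle-matrix feasibility check, which the paper then dispatches by linear programming over those twelve rows.
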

				\begin{proof}
		%
				Assume that $S$ is a Seidel matrix with the proposed spectrum, and let $\chi_S(x)$ be its characteristic polynomial. 
				Delete a row and (its corresponding) column of $S$ to form the Seidel matrix $S^\prime$.
				By Lemma~\ref{lem:coeff_of_interlacepoly}, we have
				\[
					\chi_{S^\prime}(x) = (x+5)^{31}(x-9)^7(x-11)^5f(x),
				\] where $f(x) = x^5-37x^4+530x^3+r_2x^2+r_1x+r_0$ for some integers $r_2$, $r_1$, and $r_0$.
				Let $\alpha_1 \leqslant \alpha_2$ be the two zeros of $x^2-15x+48$.
				Since the zeros of $\chi_{S^\prime}(x)$ must interlace those of $\chi_{S}(x)$, the five zeros $\gamma_1 \leqslant \dots \leqslant \gamma_5$ of $f(x)$ must satisfy $\gamma_1 \in [-5,\alpha_1]$, $\gamma_2 \in [\alpha_1,7]$, $\gamma_3 \in [7,9]$, $\gamma_4 \in [9,\alpha_2]$, and $\gamma_5 \in [\alpha_2,11]$.
		%
		%

		
				To find the possible values of $r_2$, $r_1$, and $r_0$, we apply the polynomial enumeration algorithm described in Section~\ref{sub:polynomial_enumeration_algorithm}. 
				We find $22023$ possibilities for the vector $(r_2,r_1,r_0)$ and therefore there are $22023$ possibilities for $f(x)$.
				This computation ran in SageMath~\cite{sage} on a single core of an Intel Core i7 at 2.9 GHz and took 134 seconds.
		
				From this list of $22023$ possibilities for $f(x)$, we produce the polynomial $g(x) = (x+5)^{31}(x-9)^7(x-11)^5f(x)$ and we sieve out those polynomials for which $g(x-1)$ does not satisfy Corollary~\ref{cor:evencond}. 
				This leaves us with $12$ possible polynomials for $f(x)$:
				\begin{center}
				\resizebox{\columnwidth}{!}{%
				\begin{tabular}{ c c }
				$x^5 - 37x^4 + 530x^3 - 3650x^2 + 11997x - 14985;$ & $x^5 - 37x^4 + 530x^3 - 3650x^2 + 11949x - 14553;$ \\
				$x^5 - 37x^4 + 530x^3 - 3666x^2 + 12237x - 15785;$ & $x^5 - 37x^4 + 530x^3 - 3658x^2 + 12109x - 15345;$\\ 
				$x^5 - 37x^4 + 530x^3 - 3658x^2 + 12109x - 15313;$ & $x^5 - 37x^4 + 530x^3 - 3658x^2 + 12109x - 15281;$\\ 
				$x^5 - 37x^4 + 530x^3 - 3658x^2 + 12093x - 15169;$ & $x^5 - 37x^4 + 530x^3 - 3650x^2 + 11965x - 14665;$\\ 
				$x^5 - 37x^4 + 530x^3 - 3650x^2 + 11981x - 14841;$ & $x^5 - 37x^4 + 530x^3 - 3650x^2 + 11965x - 14697;$\\
				$x^5 - 37x^4 + 530x^3 - 3642x^2 + 11837x - 14193;$ & $x^5 - 37x^4 + 530x^3 - 3642x^2 + 11821x - 14049.$
					\end{tabular}
					}
				\end{center}
		
				Using Proposition~\ref{prop:submatrix}, we find the row of the angle matrix of $S$ corresponding to each of the $12$ putative characteristic polynomials for $S^\prime$.
				Using the ordering $\lambda_1 < \lambda_2 < \dots < \lambda_6$, where $\Lambda(S) = \{\lambda_1,\dots,\lambda_6\}$,
				we write these rows below.

					\begin{center}
						\resizebox{\columnwidth}{!}{%
						\begin{tabular}{ cc }
					$\begin{array}{cccccc} 
					(\frac{385}{592} & -\frac{7\sqrt{33}}{6512} + \frac{11}{592} & \frac{1}{8} & 0 & \frac{7\sqrt{33}}{6512} + \frac{11}{592} & \frac{3}{16}) \\
					(\frac{170}{259} & -\frac{31\sqrt{33}}{2442}+ \frac{17}{222} & 0 & \frac{4}{21} & \frac{31\sqrt{33}}{2442}+ \frac{17}{222} & 0) \\
					(\frac{8119}{12432} & -\frac{191\sqrt{33}}{19536}+ \frac{131}{1776} & \frac{1}{24} & \frac{2}{21} & \frac{191\sqrt{33}}{19536} + \frac{131}{1776} & \frac{1}{16}) \\
					(\frac{169}{259} & -\frac{19\sqrt{33}}{1221} + \frac{14}{111} & 0 & \frac{2}{21} & \frac{19\sqrt{33}}{1221} + \frac{14}{111} & 0) \\
					 (\frac{577}{888} & -\frac{67\sqrt{33}}{9768} + \frac{21}{296} & \frac{1}{12} & 0 & \frac{67\sqrt{33}}{9768} + \frac{21}{296} & \frac{1}{8}) \\
					    (\frac{31}{48} & \frac{\sqrt{33}}{528} + \frac{1}{16} & \frac{1}{24} & 0 & -\frac{\sqrt{33}}{528} + \frac{1}{16} & \frac{3}{16})
					\end{array}$ & 
					$\begin{array}{cccccc} 
					(\frac{24}{37} & -\frac{15\sqrt{33}}{814} + \frac{13}{74} & 0 & 0 & \frac{15\sqrt{33}}{814} + \frac{13}{74} & 0) \\
					(\frac{145}{222} & -\frac{19\sqrt{33}}{888} + \frac{39}{296} & \frac{1}{12} & 0 & \frac{19\sqrt{33}}{888} + \frac{39}{296} & 0)  \\ 
					(\frac{1353}{2072} & \frac{3\sqrt{33}}{1628} + \frac{7}{444} & 0 & \frac{4}{21} & -\frac{3\sqrt{33}}{1628} + \frac{7}{444} & \frac{1}{8}) \\
					(\frac{1345}{2072} & -\frac{5\sqrt{33}}{4884} + \frac{29}{444} & 0 & \frac{2}{21} & \frac{5\sqrt{33}}{4884} + \frac{29}{444} & \frac{1}{8}) \\
					(\frac{1153}{1776} & -\frac{247\sqrt{33}}{19536} + \frac{73}{592} & \frac{1}{24} & 0 & \frac{247\sqrt{33}}{19536} + \frac{73}{592} & \frac{1}{16})  \\
			
					 (\frac{191}{296} & -\frac{19\sqrt{33}}{4884} + \frac{17}{148} & 0  & 0  & \frac{19\sqrt{33}}{4884} + \frac{17}{148} & \frac{1}{8})
					\end{array}$
							\end{tabular}
							}
						\end{center}
				Denote by $\mathbf{r}_1,\dots,\mathbf{r}_{12}$ the above twelve putative rows of the angle matrix of $S$.
				If $S$ exists then, by Proposition~\ref{prop:sum_angles}, there must exist nonnegative integers $n_1,\dots,n_{12}$ such that $\sum_{i=1}^{12} n_i=49$ and $\sum_{i=1}^{12} n_i\mathbf{r}_i = (32,1,1,8,1,6)$.
				However, it is straightforward to verify (via linear programming) that there does not exist such $n_1,\dots,n_{12}$.
				\end{proof}
		
		Finally, using Proposition~\ref{prop:kill_sub}, we establish the nonexistence of a Seidel matrix having the remaining spectrum from Proposition~\ref{pro:onlyints}.
		
		\begin{theorem}
			\label{thm:third_pol}
		There does not exist a Seidel matrix with the characteristic polynomial $\chi_S(x) = (x+5)^{33}(x-7)(x-9)^{9}(x-11)^7$.
		\end{theorem}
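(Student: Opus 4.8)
The plan is to follow the template of the proofs of Theorems~\ref{thm:first_pol} and~\ref{thm:second_pol}, but to feed the outcome into Proposition~\ref{prop:kill_sub} rather than into a direct angle-matrix argument. Suppose, for a contradiction, that such a Seidel matrix $S$ exists, and delete one row and the corresponding column to obtain a Seidel matrix $S'$ of order $49$. The minimal polynomial of $S$ is $m_S(x) = (x+5)(x-7)(x-9)(x-11) = x^4 - 22x^3 + 104x^2 + 502x - 3465$, so Lemma~\ref{lem:coeff_of_interlacepoly} (using $\operatorname{tr} S' = 0$ and $\operatorname{tr}(S')^2 = 49\cdot 48$) gives
\[
	\chi_{S'}(x) = (x+5)^{32}(x-9)^8(x-11)^6 \, q(x), \qquad q(x) = x^3 - 22x^2 + 153x - r,
\]
for some integer $r$, the two leading nontrivial coefficients $-22$ and $153$ being forced.

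Next I would pin down $r$. By Theorem~\ref{thm:cauchyinterlace}, the zeros $\gamma_1 \leqslant \gamma_2 \leqslant \gamma_3$ of $q(x)$ must interlace the spectrum of $S$, which forces $\gamma_1 \in [-5,7]$, $\gamma_2 \in [7,9]$, and $\gamma_3 \in [9,11]$; feeding this into the enumeration procedure of Section~\ref{sub:polynomial_enumeration_algorithm} (with both leading coefficients of $q$ fixed, so that only $r$ is free) leaves a short list of admissible values of $r$. I would then sieve this list against the sixteen congruence classes modulo $32\mathbb{Z}[x]$ of characteristic polynomials of Seidel matrices of order $49$ supplied by Lemma~\ref{lem:16elements}. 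The expectation is that only $r = 336$ survives, i.e.\ $q(x) = (x-7)(x^2-15x+48)$, so that
\[
	\chi_{S'}(x) = (x+5)^{32}(x-7)(x-9)^8(x-11)^6(x^2-15x+48).
\]
But $S'$ is a Seidel matrix, and Proposition~\ref{prop:kill_sub} asserts that no Seidel matrix has this characteristic polynomial; this contradiction finishes the proof.

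The main obstacle I anticipate is the combined interlacing-plus-congruence computation that isolates $r=336$: one must verify that every other admissible value of $r$ produces a polynomial whose reduction modulo $32\mathbb{Z}[x]$ fails to lie among the sixteen allowed classes. Should more than one value of $r$ survive this sieve, the fallback is to dispose of the extra candidates exactly as in the proofs of Theorems~\ref{thm:first_pol} and~\ref{thm:second_pol} --- compute the corresponding row of the angle matrix of $S$ via Proposition~\ref{prop:submatrix} and contradict Proposition~\ref{prop:sum_angles} (either by an integrality/linear-programming obstruction or by the column-sum identity $\sum_j \alpha_{1j}^2 = \dim\mathcal{E}(\lambda_1)$) --- while the candidate $r=336$ is always killed by Proposition~\ref{prop:kill_sub}. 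Note that the choice of which row of $S$ to delete is immaterial, since every principal submatrix of a Seidel matrix is again a Seidel matrix, so a single deletion suffices.
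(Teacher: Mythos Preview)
Your overall outline is the paper's, but your concrete expectation about the sieve is wrong, and this makes your fallback inadequate as stated.

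The congruence sieve modulo $32\mathbb Z[x]$ does \emph{not} leave only $r=336$: both $r=324$ and $r=336$ survive. For $r=324$ one has $x^3-22x^2+153x-324=(x-4)(x-9)^2$, giving $\chi_{S'}(x)=(x+5)^{32}(x-4)(x-9)^{10}(x-11)^6$; for $r=336$ one has $x^3-22x^2+153x-336=(x-7)(x^2-15x+48)$, giving the polynomial of Proposition~\ref{prop:kill_sub}. So your ``single deletion suffices'' remark breaks down: different deletions may land on different values of $r$, and you must account for all $50$ of them.

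More importantly, your fallback---contradict Proposition~\ref{prop:sum_angles} directly, ``exactly as in'' Theorems~\ref{thm:first_pol} and~\ref{thm:second_pol}---does not work here. Computing the angle-matrix rows via Proposition~\ref{prop:submatrix} yields $\mathbf r_{336}=(37/56,\,0,\,3/14,\,1/8)$ and $\mathbf r_{324}=(21/32,\,1/8,\,0,\,7/32)$ (ordering the eigenvalues $-5,7,9,11$). The system $n_{336}\mathbf r_{336}+n_{324}\mathbf r_{324}=(33,1,9,7)$, $n_{336}+n_{324}=50$, is \emph{consistent}: the unique solution is $n_{336}=42$, $n_{324}=8$. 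So Proposition~\ref{prop:sum_angles} alone gives no contradiction.

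The paper closes the gap by combining the two ingredients rather than running them in parallel: the angle-matrix computation forces $n_{336}=42>0$, hence at least one principal submatrix $S'$ has the characteristic polynomial of Proposition~\ref{prop:kill_sub}, which is impossible. Equivalently (and closer to your phrasing): Proposition~\ref{prop:kill_sub} forbids $r=336$ for \emph{every} deletion, so $n_{336}=0$ and all $50$ rows are $\mathbf r_{324}$; but then the eigenvalue-$9$ column of the angle matrix sums to $50\cdot 0=0\neq 9$, contradicting Proposition~\ref{prop:sum_angles}. Either way, you must interlock the angle-matrix count with Proposition~\ref{prop:kill_sub}; neither step disposes of a candidate on its own.
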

		\begin{proof}
		Assume that $S$ is a Seidel matrix with the proposed spectrum, and let $\chi_S(x)$ be its characteristic polynomial. 
		Delete a row and (its corresponding) column of $S$ to form the Seidel matrix $S^\prime$.
		By Lemma~\ref{lem:coeff_of_interlacepoly}, we have
		\[
			\chi_{S^\prime}(x) = \frac{(x+5)^{33}(x-7)(x-9)^9(x-11)^7}{(x+5)(x-7)(x-9)(x-11)}(x^3-22x^2+153x-r),
		\] for some integer $r$.
		By Theorem~\ref{thm:cauchyinterlace}, the zeros of $\chi_{S^\prime}(x)$ must interlace those of $\chi_{S}(x)$.
		Hence the three zeros $\gamma_1 \leqslant \gamma_2 \leqslant \gamma_3$ of $x^3-22x^2+153x-r$ must satisfy $\gamma_1 \in [-5,7]$, $\gamma_2 \in [7,9]$, and $\gamma_3 \in [9,11]$.
		Therefore, we find that we must have $r \in \{324,\dots, 336\}$ (see Section~\ref{sub:polynomial_enumeration_algorithm}).
		
		Let $\mathcal P$ denote the set of congruence classes of characteristic polynomials of Seidel matrices of order $49$ modulo $32\mathbb Z[x]$.
		Using Lemma~\ref{lem:16elements}, we can produce all $16$ elements of $\mathcal P$.
		By checking against the congruence classes in $\mathcal P$, we find that either $r = 324$ or $r=336$.
		Hence we must have either $\chi_{S^\prime}(x) = (x+5)^{32}(x-4)(x-9)^{10}(x-11)^6$ or $\chi_{S^\prime}(x) = (x+5)^{32}(x-7)(x-9)^{8}(x-11)^6(x^2-15x+48)$.
		
		Using Proposition~\ref{prop:submatrix}, we find that the angle matrix of $S$ has $42$ rows, each equal to $(37/56,0,3/14,1/8)$ and $8$ rows, each equal to $(21/32,1/8,0,7/32)$.
		Therefore, to show the nonexistence of $S$, it suffices to show the nonexistence of Seidel matrices having characteristic polynomial equal to either  $(x+5)^{32}(x-7)(x-9)^{8}(x-11)^6(x^2-15x+48)$ or $(x+5)^{32}(x-4)(x-9)^{10}(x-11)^6$.
		By Proposition~\ref{prop:kill_sub}, we have nonexistence of the former.
		\end{proof}
		
		\begin{remark}
			\label{rem:1749}
			Proposition~\ref{pro:onlyints}, together with Theorem~\ref{thm:first_pol}, Theorem~\ref{thm:second_pol}, and Theorem~\ref{thm:third_pol} shows that there cannot exist a Seidel matrix with smallest eigenvalue $-5$ having multiplicity $33$.
			Furthermore, $N(17) \leqslant 49$.
		\end{remark}
		
		\section{Concluding remarks} 
		\label{sec:concluding_remarks}
		
		The main ingredients for the improvement on the upper bound for $N(17)$ were obtained using the new restrictions on the characteristic polynomial of a Seidel matrix from Section~\ref{sec:a_modular_characterisation_of_the_characteristic_polynomial_of_a_seidel_matrix} and the computation of totally positive monic integer polynomials with trace minus degree at most $8$ in Section~\ref{sec:50lines}.
		The same approach can theoretically be applied to improve upper bounds for $N(d)$ for other $d$.
		However, the computation of the associated totally positive monic integer polynomials may become impractical.
		
		In our case, we found that the existence of $50$ equiangular lines in $\mathbb R^{17}$ was related to the existence of the polynomial $G(x)$, which had degree $17$ and trace $25$.
		Here it turned out that we could compute the all the relevant totally positive monic integer polynomials with trace minus degree at most $8$ in reasonable time.
		In general, for $n$ equiangular lines in $\mathbb R^d$, the analogous polynomial $G(x)$ will have degree $d$, but the trace may be much larger than $d$.
		When the trace of $G(x)$ is much larger than $d$, the computation of all possibilities for $G(x)$ will become much more computationally expensive.
		Hence, to successfully apply this method to improve the bounds for other $d$, we may require a more efficient method for computing totally positive monic integer polynomials.
		
		However, there are other small dimensions $d$ for which the analogous $G(x)$ has small trace relative to its degree.
		As an example, we can consider the problem of finding the value of $N(19)$.
		The current upper bound for $N(19)$ is $75$ (see Table~\ref{tab:equi}).
		Following Section~\ref{sub:from_equiangular_lies_to_seidel_matrices}, a system of $75$ equiangular lines in $\mathbb R^{19}$ would correspond to a Seidel matrix $S$ of order $75$ having smallest eigenvalue $-5$ with multiplicity $56$.
		Let $-5<\lambda_1\leqslant \lambda_2 \leqslant \dots \leqslant \lambda_{19}$ be the other eigenvalues $S$.
		Then the analogues of \eqref{eqn:tr} and \eqref{eqn:tr2} are $\sum_{i=1}^{19} \lambda_i = 56 \cdot 5 = 280$ and $\sum_{i=1}^{19} \lambda_i^2 = 75\cdot 74 - 56 \cdot 5^2 = 4150$.
		It follows that 
		\[
			\sum_{i=1}^{19} (\lambda_i-15)^2 = 25.
		\]
		In this case, we cannot rule out the possibility that $15$ is an eigenvalue of $S$.
		This adds an extra complication to the problem that we will consider in a future paper.
		\end{document}